\author{Andy Hammerlindl}
\address{School of Mathematical Sciences, Monash University, Victoria $3800$ Australia} \urladdr{ http://users.monash.edu.au/~ahammerl/}  \email{andy.hammerlindl@monash.edu}
\title{Horizontal vector fields and Seifert fiberings}
\def\saveenum{\xdef\@savedenum{\the\c@enumi\relax}}
\def\resetenum{\global\c@enumi\@savedenum}
    \newcommand{\bbR}{\mathbb{R}}
    \newcommand{\bbC}{\mathbb{C}}
    \newcommand{\bbZ}{\mathbb{Z}}
    \newcommand{\bbT}{\mathbb{T}}
    \newcommand{\bbS}{\mathbb{S}}
    \newcommand{\UTbbR}{U T \bbR}
    \newcommand{\subof}{\subset}
    \newcommand{\ti}{\times}
    \newcommand{\sans}{\setminus}
    \newcommand{\inv}{^{-1}}
    \newcommand{\Sig}{\Sigma}
    \newcommand{\al}{\alpha}
    \newcommand{\bt}{\beta}
    \newcommand{\qandq}{\quad \text{and} \quad}
    \newcommand{\qorq}{\quad \text{or} \quad}
    \newcommand{\id}{\operatorname{id}}
    \newcommand{\ior}{\operatorname{int}}
    \newcommand{\del}{\partial}
    \newcommand{\Rab}{R_{a b}}
    \newcommand{\Uab}{U_{a b}}
    \newcommand{\UT}{U T}
    \newcommand{\UTSig}{\UT \Sig}
    \newcommand{\DMSig}{\mathcal{D}(M \to \Sig)}
    \newcommand{\normalize}[1]{\frac{#1}{\|{#1}\|}}
\newcommand{\covercase}[2]{
    If $\Sig$ is the ${#1}$ orbifold, then \\
    $\UTSig$ \ $d$-fold covers \ $\phantom{-}\UTSig$ \ if
    \ $d \equiv +1$ mod ${#2}$, and \\
    $\UTSig$ \ $d$-fold covers \ $-\UTSig$ \ if
    \ $d \equiv -1$ mod ${#2}$.
}
\newcommand{\hvf}{horizontal vector field}
\newcommand{\hvfs}{\hvf s}
\newcommand{\simpleCD}[9]{
\begin{CD}
#1 @>#2>> #3 \\
@VV#4V  @VV#6V \\
#7 @>#8>> #9
\end{CD}
}
\numberwithin{equation}{section}
\newtheorem{thm}[equation]{Theorem}
\newtheorem{cor}[equation]{Corollary}
\newtheorem{lemma}[equation]{Lemma}
\newtheorem{prop}[equation]{Proposition}
\theoremstyle{remark}
\newtheorem*{remark} {\textbf{Remark}}
\begin{document}

\maketitle

\begin{abstract}
    This paper gives a classification of the topology of
    vector fields which are
    nowhere tangent to the fibers of a Seifert fibering.
\end{abstract}

\section{Introduction} \label{sec:intro}

This paper gives a classification of
the topology of horizontal vector fields on
Seifert fiber spaces.
Here, \emph{horizontal} means that the
vector field is nowhere tangent to the Seifert fibering.
This work was inspired in part by the classification of horizontal foliations
on Seifert fiber spaces
\cite{nai1994foliations}.
Depending on the geometry of the 3-manifold,
the condition of having a horizontal vector field can
be either more restrictive or less restrictive
than having a horizontal foliation.
The key observation is that a horizontal
vector field corresponds to a fiber-preserving map from the
Seifert fiber space to the unit tangent bundle of the base orbifold.

We give definitions and review the theory for
orbifolds and Seifert fiberings in sections \ref{sec:orbifold}
and \ref{sec:seifert} respectively,
and develop a general theory for \hvfs{} in \cref{sec:general}.
In this introductory section, however,
we first list
out the possibilities based on the geometry of the base orbifold.

First and most interesting are Seifert fiber spaces
where the base orbifold is a bad orbifold.
These are lens spaces and each lens space $L(p,q)$ supports
infinitely many distinct Seifert fiberings.

\begin{thm} \label{thm:lens}
    Suppose $M$ is diffeomorphic to the lens space $L(p,q)$ with $p \ge 0.$
    \begin{enumerate}
        \item        
        If $p = 1$ or $p = 2$,
        then every Seifert fibering on
        $M$ has a \hvf{}.
        \item
        If $p \ge 3$ and $q \equiv \pm 1 \bmod p$,
        then $M$ has infinitely many Seifert fiberings
        which support \hvfs{} and infinitely many which do not.
        \item
        If $p \ge 8, p$ is divisible by $4,$
        and $q \equiv \tfrac{1}{2}p \pm 1 \bmod p$,
        then $M$ has exactly one Seifert fibering
        which supports a \hvf{} and all others do not.
        \item
        For all other cases of $p$ and $q,$
        no Seifert fibering on $M$ has a \hvf{}.
    \end{enumerate} \end{thm}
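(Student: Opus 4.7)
The plan is to exploit the key observation that a \hvf{} on a Seifert fiber space $\pi : M \to \Sig$ corresponds to a fiber-preserving map $M \to \UTSig$. The Seifert fiber spaces whose base $\Sig$ is a bad orbifold are precisely the lens spaces, and the bad 2-orbifolds are the teardrop $\bbS^2(p)$ and the spindle $\bbS^2(a,b)$ with $a \neq b$. For such $\Sig$, the bundle $\UTSig$ is itself a specific lens space with a canonical Seifert fibering over $\Sig$, so the question reduces to: given a Seifert fibering of $L(p,q)$ over a bad orbifold $\Sig$, does there exist a fiber-preserving map to the distinguished Seifert fibering $\UTSig \to \Sig$?

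First I would catalogue, for each lens space $L(p,q)$, the Seifert fiberings with bad base. This is a classical enumeration: it is parametrized by an integer (essentially the Seifert invariant of the exceptional fiber(s)), which is where the ``infinitely many Seifert fiberings'' of the statement arise. For each such fibering, I would compute $\UTSig$ as a Seifert fiber space over $\Sig$ and record its Seifert invariants. Then the existence of a fiber-preserving map $M \to \UTSig$ becomes the question of whether these two Seifert fiberings of lens spaces over the same $\Sig$ are related by a finite cover fixing the base; because the base is fixed, this reduces to a congruence condition modulo $p$ comparing the fiber winding of $\pi$ with that of $\UTSig \to \Sig$.

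Finally I would sort the resulting arithmetic into the four cases. Cases (1) and (4) should follow from checking that the congruence is universally satisfied or universally unsatisfied for the given $(p,q)$. Case (2), $q \equiv \pm 1 \bmod p$, corresponds to the teardrop: here the one-parameter family of Seifert fiberings alternates between solutions and non-solutions of the congruence modulo $p$, yielding infinitely many of each. Case (3), with $4 \mid p$ and $q \equiv \tfrac{1}{2} p \pm 1 \bmod p$, should come from the spindle $\bbS^2(2, p/2)$, where the congruence is so restrictive that only one Seifert fibering among the infinite family satisfies it, namely the one realising $L(p,q)$ as $\UTSig$ itself.

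The main obstacle will be case (3): verifying that the special divisibility condition forces uniqueness rather than finitely-many-but-more solutions, and checking that the other possible bad bases of $L(p,q)$ in that divisibility range do not contribute additional \hvfs{}. The arithmetic of $\UT \bbS^2(2, p/2)$ and the interplay between its two exceptional fibers is where the computation is genuinely delicate; cases (1), (2), and (4) should fall out of a more direct enumeration.
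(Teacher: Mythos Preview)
Your plan rests on two incorrect identifications that would derail the argument.

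First, it is not true that every Seifert fibering of a lens space has a bad orbifold as base. Fiberings of the form $(0;(\alpha_1,\beta_1),(\alpha_2,\beta_2))$ already include the good elliptic bases $\bbS^2(\alpha,\alpha)$ (and $\bbS^2$ itself), and, more importantly, there is a second family of fiberings of lens spaces with invariant $(-1;(\alpha,\pm 1))$ whose base is the $\alpha\times$ orbifold, i.e.\ $\bbR P^2$ with a cone point of order~$\alpha$. This is a \emph{good} non-orientable elliptic orbifold, not a teardrop or spindle. Your cataloguing step, restricted to bad bases, would miss this family entirely.

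Second, case~(3) does \emph{not} arise from a spindle $\bbS^2(2,p/2)$. The unique fibering supporting a \hvf{} in case~(3) is exactly the exceptional fibering $(-1;(\alpha,-1))$ over the $\alpha\times$ orbifold; this manifold is $\UT(\alpha\times)$ and is diffeomorphic to $L(4\alpha,\,2\alpha+1)$, whence $p=4\alpha$ and $q\equiv \tfrac{1}{2}p\pm 1 \bmod p$. For $p\ge 8$ one then checks that none of the ordinary $(0;(\alpha_1,\beta_1),(\alpha_2,\beta_2))$ fiberings on this lens space admit a \hvf{}, so the $\alpha\times$ fibering is the sole one. Your proposed analysis of ``the arithmetic of $\UT\bbS^2(2,p/2)$'' is aimed at the wrong object.

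The paper's actual route is to introduce \emph{marked} lens spaces (recording orientations of the two core circles), prove that a fibered marked lens space $L(p,q)$ has a \hvf{} iff $p\ne 0$ and $q\equiv -1\bmod p$, and then observe that $L(p,-1)$ and $L(p,+1)$ are diffeomorphic but differently marked, which immediately gives cases (1), (2), and~(4) for the $(0;(\alpha_1,\beta_1),(\alpha_2,\beta_2))$ fiberings. Case~(3) is then handled separately by classifying the remaining fiberings over $\alpha\times$. Your congruence idea is in the right spirit, but you need the marked/unmarked distinction to organize it, and you must bring in the non-orientable base to capture case~(3).
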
    
These cases are explained in detail in \cref{sec:lens}.
To prove \cref{thm:lens}, we introduce the notion of a ``marked lens space''
which carries additional information on the orientation of the two solid
tori which are glued together to construct the lens space.
Under such a notion, $L(p, q)$ and $L(-p, -q)$ are distinct marked lens spaces
even though they are diffeomorphic. Further, except for a few special
cases,
every Seifert fibering of $L(-p, -1)$ has a \hvf{} while no Seifert fibering
of $L(p, 1)$ has a \hvf{}.

We next consider Seifert fiberings where the base orbifold is elliptic;
that is, the orbifold is finitely covered by the 2-sphere.
Some of these spaces, such as the 3-sphere, are also lens spaces and are
handled by the previous theorem.

\begin{thm} \label{thm:elliptic}
    Suppose $M$ is a Seifert fiber space with elliptic base orbifold
    and $M$ is not diffeomorphic to a lens space.
    Then $M$ has a \hvf{} if and only if
    $M,$ up to orientation, is
    the unit tangent bundle of the base orbifold.
\end{thm}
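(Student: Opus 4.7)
The backward direction is immediate: when $M$ equals $\UTSig$ up to orientation, the identity map on $\UTSig$ is a fiber-preserving self-map, and the correspondence from \cref{sec:general} converts it into the canonical \hvf{} (the tautological ``geodesic-spray'' vector field).

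For the forward direction, I would invoke the correspondence to produce a fiber-preserving map $\phi \colon M \to \UTSig$ over the identity on $\Sig$. Because the \hvf{} vanishes nowhere, $\phi$ is a submersion along every fiber; since $M$ is compact and $\UTSig$ is compact (as $\Sig$ is elliptic), $\phi$ is a covering map of some finite degree $d$. The entire task reduces to showing $d = 1$, at which point $\phi$ is a diffeomorphism and $M \cong \pm \UTSig$.

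I would pin $d$ down with two constraints. A \emph{local} constraint at each cone point of order $\alpha$: writing both fiberings in the standard local model $(D^2 \times \bbS^1)/\bbZ_\alpha$ and asking $\phi$ to descend from a degree-$d$ cover of the regular fiber forces $d$ to be coprime to $\alpha$. A \emph{global} constraint: since $\Sig$ is elliptic, $\pi_1(\UTSig)$ is a finite group (a binary polyhedral group or a small cyclic group, depending on $\Sig$), and the subgroup $\phi_* \pi_1(M)$ has index $d$, so $d$ divides $|\pi_1(\UTSig)|$. A short arithmetic check then forces $d = 1$ for each elliptic orbifold whose unit tangent bundle is not a lens space: for $\Sig = \bbS^2(2,3,5)$, $d$ must be coprime to $30$ and divide $120$; for $\bbS^2(2,3,4)$, coprime to $24$ and divide $48$; for $\bbS^2(2,3,3)$, coprime to $6$ and divide $24$; and for $\bbS^2(2,2,n)$, coprime to $\mathrm{lcm}(2,n)$ and divide $4n$. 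For the non-orientable elliptic orbifolds, $\UTSig$ is itself a lens space, so $\pi_1(\UTSig)$ is cyclic and any $M$ that fiber-preservingly covers it is a lens space, contradicting the hypothesis on $M$.

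The main obstacle is the local analysis at cone points. One must carefully show that if the local Seifert invariants of $M$ and $\UTSig$ at a cone point of order $\alpha$ are $(\alpha, \beta_M)$ and $(\alpha, \beta_{\UTSig})$, then the existence of a degree-$d$ fiber-cover is equivalent to the congruence $d\,\beta_M \equiv \beta_{\UTSig} \pmod{\alpha}$, which is solvable only when $\gcd(d,\alpha) = 1$. Once $d = 1$ is established, this same congruence yields $\beta_M \equiv \beta_{\UTSig} \pmod{\alpha}$ at every cone point, so $M$ and $\UTSig$ have matching Seifert invariants and are isomorphic as Seifert fiber spaces up to a global orientation choice, completing the proof.
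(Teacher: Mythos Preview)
Your overall architecture matches the paper's: reduce to a fiberwise map $M\to\UTSig$, argue it is (up to homotopy) a covering of some degree $d$, and then pin $d$ down to $1$. But there is a genuine gap in the step where you pass from the fiber-preserving map $\phi$ to a covering map. The assertion ``because the \hvf{} vanishes nowhere, $\phi$ is a submersion along every fiber'' is false. A horizontal vector field on a fibered solid torus can point in a fixed direction (say ``north'') everywhere, in which case $\phi$ is constant along each fiber and certainly not a submersion. More generally $\phi$ restricted to a fiber can have critical points even when the vector field is nowhere zero. What is actually needed here is the content of \cref{prop:seicover}: the fiber-preserving map is \emph{homotopic} either to a composition $M\to\Sig\to\UTSig$ (degree $0$) or to an honest fiberwise covering. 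The degree-$0$ case must then be ruled out separately, using \cref{prop:orbvfield}: an elliptic orbifold with cone points admits no nonzero vector field, so no section $\Sig\to\UTSig$ exists. Only after this do you legitimately have a covering map to which the $\pi_1$ index argument applies.

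Once the covering is in hand, your method of bounding $d$ is genuinely different from the paper's and works. The paper uses Euler numbers: the relation $d\cdot e(M\to\Sig)=\chi(\Sig)$ together with the fact that $e(M\to\Sig)$ lies in the lattice $\tfrac{1}{\al_1}\bbZ+\cdots+\tfrac{1}{\al_n}\bbZ$ forces $d$ to be small, and a short case analysis (\cref{prop:ellcover}) gives $d\le 2$, with $d=2$ only over the $pp$ orbifold (hence $M$ a lens space). Your approach instead combines the local coprimality constraint $\gcd(d,\al_i)=1$ with the global divisibility $d\mid |\pi_1(\UTSig)|$, the latter coming from the fact that elliptic unit tangent bundles are spherical space forms with finite fundamental group. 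This is more geometric in flavor and avoids Euler-number bookkeeping, at the cost of needing the orders of the binary polyhedral and dicyclic groups as input. Your treatment of the $p\ti$ case (any cover of a lens space is a lens space, since subgroups of cyclic groups are cyclic) is correct and slightly slicker than the paper's Euler-number computation for that case.
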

Next consider Seifert fiber spaces over parabolic orbifolds.
Such an orbifold is covered by the Euclidean plane and corresponds to a wallpaper
group with only rotational symmetry.

\begin{thm} \label{thm:parabolic}
    Suppose $M$ is a Seifert fiber space with parabolic base orbifold.
    Then $M$ has a \hvf{} if and only if either
    \begin{enumerate}
        \item $M,$ up to orientation, is
        the unit tangent bundle of the base orbifold, or
        \item
        the base orbifold is a surface (either the 2-torus or the Klein
        bottle).
    \end{enumerate} \end{thm}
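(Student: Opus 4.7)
The plan is to invoke the correspondence, established in \cref{sec:general}, between \hvfs{} on $M$ and fiber-preserving maps $\varphi \colon M \to \UTSig$ covering the identity of the base orbifold $\Sig$. Both directions of the theorem then reduce to questions of existence of such a $\varphi$.

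For sufficiency, the case $M = \pm \UTSig$ is immediate by taking $\varphi = \id$, possibly precomposed with a fiber-preserving orientation-reversing involution of $\UTSig$. When instead $\Sig$ is a closed surface, the only parabolic options are $\bbT^2$ and the Klein bottle, and there are no cone points, so any Seifert fibering $M \to \Sig$ is a principal $\bbS^1$-bundle. Since $\chi(\Sig) = 0$ in these cases, the base admits a nowhere-zero vector field $X$; choosing any Ehresmann connection on $M \to \Sig$ and letting $\tilde X$ be the horizontal lift of $X$ produces a nowhere-zero vector field on $M$ transverse to every fiber, i.e.\ a \hvf{}.

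For necessity, assume $\Sig$ has at least one cone point, so $\Sig$ belongs to the list $\bbS^2(2,2,2,2)$, $\bbS^2(3,3,3)$, $\bbS^2(2,4,4)$, $\bbS^2(2,3,6)$, or one of the non-orientable parabolic orbifolds with cone points. Suppose $\varphi \colon M \to \UTSig$ is fiber-preserving over $\id_\Sig$, and let $d \in \bbZ \setminus \{0\}$ be its degree on a regular fiber. It will suffice to force $d = \pm 1$, because a fiber-preserving map of fiber-degree $\pm 1$ over $\id_\Sig$ between two Seifert fiberings of the same base orbifold is a diffeomorphism, giving $M = \pm \UTSig$. To force $d = \pm 1$, examine $\varphi$ on a fibered solid-torus neighborhood of each singular fiber. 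Writing the Seifert invariant of $\UTSig$ above a cone point of order $n_i$ as $(n_i, \beta^{\UT}_i)$, compatibility of $\varphi$ there forces the corresponding Seifert invariant $(n_i, \beta_i)$ of $M$ to satisfy $\beta_i \equiv d\, \beta^{\UT}_i \pmod{n_i}$. The global Euler-number relation $e(M) = d \cdot e(\UTSig)$, together with $e(\UTSig) = \chi^{\mathrm{orb}}(\Sig) = 0$ for parabolic $\Sig$, contributes no further obstruction but pins down the $\beta_i$ modulo the $n_i$. For the cone signatures $(2,2,2,2)$, $(3,3,3)$, $(2,4,4)$, $(2,3,6)$ the simultaneous congruences admit only $d \equiv \pm 1$ modulo each $n_i$, and the coprimality or small size of the $n_i$ then forces $d = \pm 1$ outright.

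The main obstacle is this last arithmetic rigidity: I must explicitly compute the Seifert invariants of $\UTSig$ at each cone point for each parabolic orbifold in the list, cleanly done by realizing $\UTSig$ as a finite quotient of $\UT \bbT^2 = \bbT^3$ by the appropriate rotation group lifted to the unit tangent bundle, and then verify that every cone-point signature rules out all $d$ other than $\pm 1$. The surface cases ($\bbT^2$ and the Klein bottle) escape any such constraint precisely because there are no cone points and hence no local modular rigidity on $d$, which is why every Seifert fibering over those surfaces admits a \hvf{}.
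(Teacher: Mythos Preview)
Your necessity argument has a genuine gap: the claim that the cone-point congruences ``force $d = \pm 1$ outright'' is false. For every parabolic orbifold $\Sig$ with cone points, the unit tangent bundle $\UTSig$ admits fiberwise self-coverings of arbitrarily large degree---any $d$ coprime to all the cone orders works. For instance, over the $333$ orbifold, $\UTSig$ covers itself with degree $4, 7, 10, \ldots$; over the $236$ orbifold, degrees $7, 13, 19, \ldots$ occur. The congruences $d\bt_i \equiv -1 \pmod{\al_i}$ only pin down $d$ modulo $\operatorname{lcm}(\al_i)$, not as an integer. So your reduction ``it will suffice to force $d = \pm 1$'' cannot succeed, and the proof does not close.

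Ironically, you already have the tool that does work and then discard it. You write that the Euler-number relation ``contributes no further obstruction,'' but in fact it is the entire argument. From the covering $M \to \UTSig$ of degree $d$ one gets $d \cdot e(M \to \Sig) = e(\UTSig \to \Sig) = \chi(\Sig) = 0$ (note the direction: the quotient multiplies the Euler number by $d$, not the other way around), hence $e(M \to \Sig) = 0$. The paper then proves directly (\cref{prop:zerozero}) that the \emph{only} oriented Seifert fiberings over a parabolic $\Sig$ with Euler number zero are $\pm\UTSig$; this is a short case-by-case check using the Seifert invariant. That classification, not any constraint on $d$, is what yields $M = \pm\UTSig$.

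Two smaller points: your sufficiency argument is fine in spirit, though a Seifert fibering over $\bbT^2$ or $K$ need not be a \emph{principal} $\bbS^1$-bundle (and $M$ need not even be orientable); the horizontal-lift construction still works for any circle bundle, or you can simply compose the projection $M \to \Sig$ with a unit vector field $\Sig \to \UTSig$ as the paper does. Also, you silently assume $d \ne 0$; this requires the observation that a degree-zero map would yield a section $\Sig \to \UTSig$, impossible when $\Sig$ has cone points.
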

\Cref{sec:para} explains both of the above cases in detail.
Note that the two cases overlap when the base orbifold is a surface.

Finally, the most general case is for hyperbolic orbifolds.

\begin{thm} \label{thm:hyperfull}
    Suppose $M$ is a Seifert fiber space with hyperbolic base orbifold.
    Then the following are equivalent{:}
    \begin{enumerate}
        \item $M$ has a \hvf{},
        \item
        $M$ finitely covers the unit tangent bundle of the base orbifold,
        \item
        $M$ supports an Anosov flow.
    \end{enumerate} \end{thm}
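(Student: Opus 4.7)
The plan is to establish the equivalence via the cycle $(1) \Rightarrow (2) \Rightarrow (3)$ together with $(3) \Rightarrow (1)$. The first two implications follow from the correspondence developed in \cref{sec:general} and classical facts about geodesic flows on hyperbolic surfaces. The main obstacle is $(3) \Rightarrow (1)$, which requires invoking external rigidity results for Anosov flows on Seifert manifolds.

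For $(1) \Rightarrow (2)$, the correspondence from \cref{sec:general} converts a \hvf{} on $M$ into a fiber-preserving map $\phi \colon M \to \UTSig$ over $\Sig$ of some fiber degree $d \in \bbZ$. Naturality of the rational Euler number then gives the relation $\chi(\Sig) = d \cdot e(M/\Sig)$. Since $\Sig$ is hyperbolic, $\chi(\Sig) < 0$ and in particular is nonzero, so $d \neq 0$. Thus $\phi$ is a fiber-preserving map of nonzero fiber degree between two Seifert fiberings of the same orbifold base, which forces it to be a $|d|$-fold covering. Hence $M$ finitely covers $\UTSig$.

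For $(2) \Rightarrow (1)$ and $(2) \Rightarrow (3)$, I would equip $\Sig$ with any hyperbolic structure. The geodesic vector field on $\UTSig$ is horizontal with respect to the Seifert fibering $\UTSig \to \Sig$, and the corresponding geodesic flow is Anosov. A finite covering $M \to \UTSig$ is a local diffeomorphism that pulls back both the horizontal vector field and the Anosov flow to $M$, yielding $(1)$ and $(3)$ simultaneously.

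The hard part is $(3) \Rightarrow (1)$. Here the plan is to invoke the classification of Anosov flows on Seifert fiber spaces due to Ghys, with orbifold-base extensions by Barbot and subsequent authors: up to finite cover and orbit equivalence, every such flow is conjugate to the geodesic flow on the unit tangent bundle of a hyperbolic orbifold, and the orbit equivalence respects the Seifert fiberings. The essential consequence is that the Anosov orbits are transverse to the Seifert fibers of $M$, which is precisely horizontality of the generating vector field. Identifying the orbifold produced by the classification with $\Sig$ itself relies on the fact that a Seifert manifold with hyperbolic base admits a unique Seifert fibering up to isotopy, in contrast with the lens space setting of \cref{thm:lens}, so the fibering recovered from Ghys' theorem must be the given one.
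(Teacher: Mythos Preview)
Your proposal is correct and follows essentially the same approach as the paper: the machinery of \cref{sec:general} (packaged there as \cref{thm:bigequiv}) for $(1) \Leftrightarrow (2)$, and the Ghys--Barbot classification of Anosov flows on Seifert manifolds for the equivalence with $(3)$. The paper is terser---it just cites \cref{thm:bigequiv} rather than rederiving $d \neq 0$ from the Euler-number relation---and it leaves implicit your observation that uniqueness of the Seifert fibering over a hyperbolic base is needed to identify the fibering produced by Ghys--Barbot with the given one on $M$.
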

The equivalence $(2)$ $\Leftrightarrow$ $(3)$ in \cref{thm:hyperfull}
was proved by Barbot, extending a result of Ghys on circle bundles
\cite{ghy1984flots,bar1996flots}.
The proof of $(1)$ $\Leftrightarrow$ $(2)$ is discussed in \cref{sec:hyp}.
\Cref{thm:hyperfull} was part of the original motivation
for exploring the properties of these vector fields.
In joint work with Mario Shannon and Rafael Potrie,
we analyze partially hyperbolic dynamical systems
on Seifert fiber spaces
\cite{hps2018seifert}.
Most of that paper deals only with the case of dynamical systems
defined on circle bundles.
However, \cite[\S $5$]{hps2018seifert} employs \cref{thm:hyperfull}
above to extend these results to the case of a general Seifert fiber
space. See the cited paper for further details.

\Cref{sec:homotopy} studies the homotopy classes of \hvfs{},
and
\cref{sec:boundary} looks at the case of Seifert fiberings on manifolds with
boundary.

\medskip

\section{Orbifolds} \label{sec:orbifold} 

Intuitively,
an orbifold is an object that locally resembles
a quotient of $\bbR^n$ by a finite group.
In this paper, we will only consider those orbifolds
that appear as the base space of a Seifert fibering,
and so we give a simpler definition restricted to this case.

Let $D^2 = \{ z \in \bbC : |z| \le 1 \}$ be the closed unit disk.
For an integer $a \ge 1,$
define $r_a : D^2 \to D^2, z \to e^{2 \pi i/a} z.$
This is a rotation of order $a$.
Define
$D_a$ as the topological quotient space
$D_a = D^2 / r_a.$
If $a > 1,$ then $D_a$ is homeomorphic to a disk,
but is not a smooth surface.
By a slight abuse of notation,
we write $0 \in D_a$ for the image of $0 \in D^2$ under the quotient.
If $U$ is an open subset of $D_a,$
then $U$ is a smooth manifold if and only if $0 \notin U.$
Let $\ior(D_a)$ denote the interior of $D_a.$

An \emph{orbifold} $\Sig$ is a closed topological surface
equipped with an atlas of charts such that
\begin{enumerate}
    \item each chart is of the form
    \begin{math}
        (\varphi, U, a)
    \end{math}
    where $a \ge 1$ is an integer,
    $U \subof \Sig$ is open, and
    $\varphi : U \to \ior(D_a)$ is a homeomorphism;
    \item
    the charts cover $\Sig;$ and
    \item
    if $(\varphi, U, a)$ and $(\psi, V, \hat a)$
    are distinct charts in the atlas,
    then $\varphi(U \cap V)$ and $\psi(U \cap V)$
    are smooth manifolds and
    \begin{math}
        \psi \circ \varphi \inv : \varphi(U \cap V) \to \psi(U \cap V)
    \end{math}
    is a diffeomorphism.
\end{enumerate}
The final item above implies
in particular
that if $a > 1$ then $\varphi \inv(0) \notin V$ and
if $\hat a > 1$ then $\psi \inv(0) \notin U.$
We call $x \in \Sig$ a \emph{cone point} if the atlas has a chart
$(\varphi, U, a)$ such that $\varphi(x) = 0$ and $a > 1.$
Here, $a$ is the \emph{order} of the cone point.
In this version of the definition of an orbifold,
each cone point is contained in exactly one chart.
Since the orbifold is compact, it can have at most finitely many
cone points.

In this paper,
we refer to orbifolds using the orbifold notation of Thurston and Conley
\cite{con1992orbifold}.
In this notation, the starting surface is a sphere
and orbifolds are constructed from this by surgery.
A positive integer $a$ in the notation corresponds to excising a disk
and gluing in $D_a$ in its place.
For example, the $237$ orbifold is a sphere with cone points
of orders $2, 3,$ and $7$ added.
We allow ones in the notation in order to make some results easier to state.
For instance, consider the family of $pp$ orbifolds with $p \ge 1.$
This is an infinite family of orbifolds that contains the $11$ orbifold,
i.e, the 2-sphere, as a member of the family.
The symbol ``$\ti$'' corresponds to replacing a disk with a cross cap,
and so
the $22\ti$ orbifold is the sphere with two cones points of order two
and one cross-cap added.
The symbol ``$o$'' corresponds to adding a handle, and so
the $23o o$ orbifold is
a genus-two surface with points of orders $2$ and $3$ added.
We do not consider orbifolds with silvered edges or corner reflectors
and so the ``*'' symbol will not occur in any of the notation here.

For an orbifold $\Sig,$ the \emph{underlying topological surface} $\Sig_0$
is just the orbifold itself treated as a topological surface
and forgetting any of the additional information given by the charts
$(\varphi, U, a).$

\medskip{}

For a smooth surface $\Sig$ equipped with a Riemannian metric,
one can define the unit tangent bundle $\UTSig$
consisting of all tangent vectors of length $ \| v \| = 1.$
This is a 3-manifold $\UTSig$
with a natural circle fibering coming from the projection
$\UTSig \to \Sig.$
We now extend this notion of a unit tangent bundle
to the case of an orbifold.

First, consider a choice of Riemannian metric $g$ on the disk $D^2$
such that with respect to this metric the rotation $r_a : D^2 \to D^2$
is an isometry.
If $U$ is an open subset of $D_a$ with $0 \notin U,$
then $g$ descends to a Riemannian metric on $U.$
By a slight abuse of notation, we call such a $g$ a
Riemannian metric on $D_a.$
Define $T D_a$ as the quotient of $T D^2$ by the
tangent map $T r_a : T D^2 \to T D^2.$
The tangent bundle $T D^2$
has the structure of a smooth 4-manifold with boundary,
but if $a > 1,$ then the quotient $T D_a$ does not
have such structure.
Since $r_a$ is an isometry,
its tangent map restricts to a diffeomorphism
of the unit tangent bundle of $D^2.$
This diffeomorphism and its iterates
$T r_a^i|_{U T D^2}$ for $1 \le i < a$
have no fixed points.
From this property, one can see that the quotient
$U T D_a$ = $U T D^2 / T r_a$
is a smooth 3-manifold with boundary.
In fact, it is a solid torus.

For an orbifold $\Sig,$
we define the tangent bundle using using equivalence classes of tangent
vectors in charts.
That is, consider the set $X$ of all tuples
$(p, (\phi, U, a), v)$
where $p$ is a point in $\Sig,$ $(\phi, U, a)$ is a chart containing $p,$
and $v$ a vector in $T D_a$ based at $\phi(p).$
The equivalence relation is defined by
\[
    (p, (\phi, U, a), v) \sim (q, (\psi, V, \hat a), w)
\]
if and only if
\[
    p = q
    \qandq 
    T(\psi \circ \phi \inv)(v) = w.
\]
The \emph{tangent bundle} $T \Sig$ of the orbifold
is defined as the set of equivalence classes of $X.$

A \emph{Riemannian metric} on an orbifold $\Sig$
is a choice for every chart $(\phi, U, a)$
of Riemannian metric on $D_a$
such that every transition map
$\psi \circ \phi \inv : \phi(U \cap V) \to \psi(U \cap V)$
is an isometry.
Suppose $\Sig$ is an orbifold equipped with a Riemannian metric.
As the transition maps are isometries,
every element $v \in T \Sig$ has a well-defined length,
even those vectors based at cone points,
and we may define the \emph{unit tangent bundle}
\begin{math}
    \UTSig = \{ v \in T \Sig : \| v \| = 1 \}.
\end{math}

A key observation is that even though $\Sig$ is an orbifold,
its unit tangent bundle is a smooth 3-dimensional manifold.
To see this,
note that $\UTSig$ is covered by open sets of the form
$U T V$ where $V \subof \Sig$ is given by a chart $(V, \phi, a)$
and $T \phi : U T V \to U T D_a$
gives a smooth structure to $U T V.$
Distinct choices of metric on $\Sig$ will yield
unit tangent bundles which are distinct as subsets of $T \Sig,$
but which are nonetheless diffeomorphic to each other.
Because of this,
in what follows we will at times discuss ``the''
unit tangent bundle of an orbifold without giving a specific choice of metric.

The unit tangent bundle of an orbifold is oriented
if even the orbifold itself is not.
This is because at a point $x \in \Sig,$
a choice of orientation of the tangent space $T_x \Sig$
induces an orientation on the circle of unit tangent vectors.
Regardless of the choice of orientation on $T_x \Sig,$
the resulting orientation on the 3-manifold is the same.

\medskip{}

One way to produce an orbifold is to quotient a surface by a discrete group of
isometries.
We call an orbifold $\Sig$ a \emph{good orbifold}
if there is a Riemannian surface $S$ of constant curvature
and a finite group of isometries $G$ acting on $S$
such that $\Sig = S / G$ with the orbifold structure of $\Sig$ 
coming from the smooth structure of $S.$
We do not make a distinction between ``good'' and ``very good''
orbifolds as the notions are equivalent in the 2-dimensional setting.
An orbifold is \emph{bad} if it is not good.
Proofs of the following two results are given
in \cite[Chapter $5$]{cho2012geometric}.
See also the discussion in \cite[\S $2$]{sco1983geometries}.

\begin{prop}
    An orbifold $\Sig$ is bad if and only if
    it is either
    \begin{enumerate}
        \item a sphere with a single cone point of order $p > 1$ added, or
        \item
        a sphere with two cone points of orders $p \ne q$ added.
    \end{enumerate} \end{prop}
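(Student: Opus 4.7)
The plan is to treat the two directions separately. For the direction that each listed orbifold is bad, the main tool is the multiplicativity of the orbifold Euler characteristic: if $\Sig = S/G$ with $S$ a closed surface and $G$ a finite group of isometries, then by a standard application of Riemann--Hurwitz one has $\chi(S) = |G| \cdot \chi^{\mathrm{orb}}(\Sig)$, and the order of any cone point equals the order of the stabilizer of a preimage in $S$ and therefore divides $|G|$. In case $(1)$, $\chi^{\mathrm{orb}}(\Sig) = 1 + 1/p$; writing $|G| = p m$ gives $\chi(S) = m(p+1) \ge 3$, contradicting $\chi(S) \le 2$. In case $(2)$, both $p$ and $q$ divide $|G|$, so $\mathrm{lcm}(p,q) \mid |G|$; setting $d = \gcd(p,q)$ and $|G| = (pq/d) m$, a short calculation yields $\chi(S) = m(p+q)/d$, and since $p \ne q$ the distinct positive integers $p/d$ and $q/d$ sum to at least $3$, again contradicting $\chi(S) \le 2$.

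For the converse I would argue by cases on the combinatorics of $\Sig$. If $\Sig$ has no cone points, it is already a smooth closed surface and is trivially good. If $\Sig$ is a sphere with exactly two cone points of the same order $p$, then it is realized as $S^2 / \bbZ_p$ under the rotation of order $p$ whose two fixed points are the cone points. In every remaining case I would appeal to orbifold uniformization: such a $\Sig$ admits a spherical, Euclidean, or hyperbolic structure according to the sign of $\chi^{\mathrm{orb}}(\Sig)$, so its universal orbifold cover is a surface of constant curvature on which the orbifold fundamental group acts discretely and cocompactly by isometries. Selberg's lemma then produces a torsion-free finite-index subgroup, whose normal core gives a torsion-free normal subgroup of finite index; the quotient of the universal cover by this subgroup is a smooth closed constant-curvature surface $S$, and the finite deck group $G$ realizes $\Sig = S/G$.

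The main obstacle is this last step, which is essentially the uniformization theorem for $2$-orbifolds. In a self-contained treatment it would be handled by exhibiting explicit geometric structures: triangle groups in $S^2$, $\bbR^2$, or $\bbH^2$ take care of the sphere with three cone points, and polygon-based constructions handle orbifolds with additional cone points, cross-caps, or handles. Given the references already cited in the paper, however, the most efficient route is to invoke orbifold uniformization as a black box and combine it with Selberg's lemma as above.
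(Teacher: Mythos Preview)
The paper does not give its own proof of this proposition; it simply cites Chapter~5 of Choi's book and \S2 of Scott's survey. Your argument is correct and is essentially the standard one found in those references: the ``bad'' direction via the multiplicativity $\chi(S) = |G|\,\chi(\Sig)$ together with the divisibility of $|G|$ by the cone-point orders, and the ``good'' direction via explicit quotients for the easy cases and orbifold uniformization (plus Selberg's lemma to pass to a finite manifold cover) for the rest. Your honest flagging of uniformization as the nontrivial black box is exactly right; that is also where the cited references do the real work.
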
    
For an orbifold $\Sig$ with
underlying topological surface $\Sig_0$
and cone points of order $\al_1, \ldots, \al_n,$
define the \emph{Euler characteristic} of $\Sig$
as
\[
    \chi(\Sig) = \chi(\Sig_0) - \sum_i \left( 1 - 1/\al_i \right).
\]
One way to think of this definition is that it is the
formula $\chi(\Sig) = V - E + F$
for vertices, edges, and faces, but with a cone point counting
as a fraction $1/\al_i$ of a vertex.

\begin{prop}
    If $\Sig = S / G$ is a good orbifold, then
    $\chi(\Sig) = \frac{1}{|G|} \chi(S).$
\end{prop}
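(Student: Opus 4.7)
The plan is to use a $G$-equivariant triangulation and then do a careful count of vertices, edges, and faces weighted by stabilizer size. Since $G$ acts on $S$ by isometries, any point $x \in S$ with nontrivial stabilizer $G_x$ is fixed by a finite group of rotations of $T_x S$, so $G_x$ is cyclic of some order $\alpha$, and such an $x$ maps to a cone point of order $\alpha$ in $\Sig$; conversely, every cone point of $\Sig$ arises this way and the order of the cone point equals the order of the stabilizer of any preimage.

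First I would choose a triangulation $T$ of the underlying surface $\Sig_0$ in which every cone point is a vertex, and then pull $T$ back under the quotient map $\pi : S \to \Sig$ to obtain a triangulation $\tilde{T}$ of $S.$ Because $G$ acts freely on the interior of every face and every edge of $\tilde T$ (a nontrivial isometry that fixes an open arc or open 2-cell must be the identity), the number of faces of $\tilde T$ is $|G| \cdot F_T$ and the number of edges is $|G| \cdot E_T,$ where $F_T$ and $E_T$ denote the face and edge counts of $T.$ For vertices, a non-cone-point vertex $v$ has trivial stabilizer and contributes $|G|$ preimages, while a cone-point vertex of order $\al_i$ has stabilizer of order $\al_i$ and contributes $|G|/\al_i$ preimages.

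Putting this together and letting $n$ be the number of cone points, I would compute
\begin{align*}
\chi(S) &= V_{\tilde T} - E_{\tilde T} + F_{\tilde T} \\
 &= |G|(V_T - n) + \sum_{i=1}^n \frac{|G|}{\al_i} - |G| E_T + |G| F_T \\
 &= |G|\,\chi(\Sig_0) - |G| \sum_{i=1}^n \left(1 - \tfrac{1}{\al_i}\right) \\
 &= |G|\,\chi(\Sig),
\end{align*}
which is the desired identity.

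The main technical obstacle is justifying the existence of a triangulation of $\Sig_0$ in which all cone points appear as vertices and whose pullback to $S$ is a genuine simplicial (or CW) triangulation of $S;$ since the cone points form a finite set and the quotient map is a smooth covering away from them, this can be arranged by taking a fine enough triangulation of $\Sig_0$ and subdividing near each cone point so that a small disk neighborhood is a union of triangles meeting only at the cone-point vertex. The rest of the argument is simply a stabilizer-weighted bookkeeping that turns the formula $\chi(\Sig) = \chi(\Sig_0) - \sum(1 - 1/\al_i)$ into the multiplicative covering formula.
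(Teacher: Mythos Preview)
Your argument is correct and is the standard proof via a $G$-equivariant cell decomposition. The paper itself does not supply a proof of this proposition; it states the result and refers the reader to external sources (Choi's book and Scott's survey). So there is no proof in the paper to compare against, and your write-up would in fact fill that gap.

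One small remark on the parenthetical justification: the reason each edge and face of $T$ has exactly $|G|$ preimages in $\tilde T$ is not really that ``a nontrivial isometry that fixes an open arc or open 2-cell must be the identity,'' but simply that $\pi : S \to \Sig$ is an honest $|G|$-sheeted covering away from the cone points, and you have arranged for all cone points (equivalently, all points of $S$ with nontrivial stabilizer) to be vertices. The counting and the conclusion are unaffected.
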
    

We call a good orbifold $\Sig,$ \emph{elliptic} if $\chi(\Sig) > 0,$
\emph{parabolic} if $\chi(\Sig) = 0,$
and
\emph{hyperbolic} if $\chi(\Sig) < 0.$
These cases correspond to the orbifold being covered
by the 2-sphere, 2-torus, or a hyperbolic higher-genus surface
respectively.

\medskip{}

We may define vector fields on orbifolds analogously to vector fields
on manifolds.
If $\Sig$ is an orbifold, a \emph{vector field} on $\Sig$
is a continuous function $v : \Sig \to T \Sig$ such that $\pi(v(x)) = x$
for all $x \in \Sig$ where $\pi$ is the canonical projection $\pi : T \Sig \to \Sig.$

\begin{prop} \label{prop:orbvfield}
    If $v$ is a vector field on an orbifold $\Sig,$
    then $v$ is zero at every cone point on $\Sig.$
    In particular, if $v$ is a non-zero vector field,
    then $\Sig$ is a surface and is either the 2-torus or Klein bottle.
\end{prop}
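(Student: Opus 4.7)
The plan is to argue by contradiction: if $v(x_0) \neq 0$ at some cone point $x_0$ of order $a > 1$, then the normalized unit vector field near $x_0$ pulls back to the branched cover $D^2$, and equivariance at the ramification point forces $Tr_a$ to fix a unit vector in $UT_0 D^2$, which it does not. The second assertion is then an immediate consequence of Poincar\'e--Hopf.

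First I would work in an orbifold chart $(\phi, U, a)$ with $\phi(x_0) = 0$, identify $U$ with $\ior(D_a)$, and choose a small open disk neighborhood $V$ of $0$ in $D_a$ on which $v$ is nowhere zero. Its preimage $\tilde V = q\inv(V) \subof D^2$ under the quotient map $q : D^2 \to D_a$ is then itself an open disk, and in particular is simply connected. Normalizing yields a continuous section $s := v/\|v\| : V \to UT V$ of the orbifold unit tangent bundle. The key input supplied by the excerpt is that $Tr_a$ acts freely on $UT D^2$, so that $p : UT D^2 \to UT D_a$ is a genuine covering map of degree $a$. Because $\tilde V$ is simply connected, the map $s \circ q : \tilde V \to UT D_a$ lifts continuously to some $\tilde s : \tilde V \to UT D^2$, and after post-composing with a suitable deck transformation $Tr_a^i$ we may arrange $\pi \circ \tilde s = \id_{\tilde V}$, making $\tilde s$ a genuine section.

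The decisive step is equivariance. Both $\tilde s \circ r_a$ and $Tr_a \circ \tilde s$ are continuous lifts of $s \circ q$, so they differ by a deck transformation; comparing the underlying maps via $\pi$ and using $\pi \circ \tilde s = \id$ pins down this deck transformation to be $Tr_a$ itself, giving $\tilde s(r_a z) = Tr_a(\tilde s(z))$ on all of $\tilde V$. Evaluating at $z = 0$ yields $\tilde s(0) = Tr_a(\tilde s(0))$, a unit vector in $UT_0 D^2 \cong \bbS^1$ fixed by rotation by $2\pi/a$. For $a > 1$ no such vector exists, contradicting the assumption. Hence $v$ vanishes at every cone point. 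For the ``in particular'' clause, a nowhere-zero $v$ implies $\Sig$ has no cone points, so $\Sig$ is a closed smooth surface; the Poincar\'e--Hopf theorem then gives $\chi(\Sig) = 0$, leaving only the $2$-torus and the Klein bottle.

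The main technical care is needed in the lifting and section-adjustment step: one must check that $V$ (and hence $\tilde V$) really can be taken simply connected and that the deck adjustment turning the abstract covering lift into a section is legitimate. Both are routine given that $Tr_a$ acts freely on $UT D^2$ and that small $r_a$-invariant disks around $0$ in $D^2$ are simply connected, but it is at precisely this juncture that the orbifold structure enters the argument.
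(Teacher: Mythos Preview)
Your argument is correct, and the core mechanism---equivariance at the fixed point $0$ forces a unit tangent vector there to be fixed by the nontrivial rotation $Tr_a$, which is impossible---is exactly the idea the paper uses. The difference is in packaging. The paper works directly in $T D_a$: by definition $T D_a = T D^2 / T r_a$, so a continuous section of $T D_a \to D_a$ is \emph{the same data} as an $r_a$-equivariant vector field on $D^2$. Evaluating $T r_a(v(0)) = v(r_a(0)) = v(0)$ immediately forces $v(0) = 0$, in one line, with no normalization, no shrinking to a nowhere-zero neighbourhood, no covering-space lifting lemma, and no deck-transformation bookkeeping. Your route through the unit tangent bundle and the covering $U T D^2 \to U T D_a$ recovers this equivariance by hand, which is fine but adds several steps (and the ``adjust the lift to be a genuine section'' manoeuvre) that the quotient definition of $T D_a$ gives for free. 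The second clause, via Poincar\'e--Hopf, matches the paper exactly.
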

\begin{proof}
    First, consider the case of a vector field defined on $D_a$ with $a > 1.$
    Such a vector field lifts to a vector field $v : D^2 \to T D^2$
    which is invariant under the rotation $r_a;$
    that is, $T r_a(v(x)) = v(r_a(x))$ for all $x \in D^2.$
    If $x = r_a(x)$ is the center of the disk, this is only possible
    if the vector is zero at this point.
    This shows that a vector field must be zero at the cone point in $D_a.$
    Since orbifolds are locally modelled on $D_a,$ any vector field
    on an orbifold must be zero at all cone points.

    If an orbifold $\Sig$ has a non-zero vector field, then $\Sig$ must have
    no cone points and is a true surface. We are only considering
    the case of surfaces without boundary and the Poincar\'e-Hopf theorem
    implies that $\chi(\Sig) = 0.$
\end{proof}
\section{Seifert fiberings} \label{sec:seifert} 

We now discuss the definition and properties of Seifert fiberings.

Recall that $D^2$ is a closed unit disk and $r_a$ is a rotation of order $a$
on $D^2.$
Let $S^1 = \del D^2$ and note that $r_a$ restricts to $S^1.$
For coprime integers $a,b$ with $a \ge 1,$
define a diffeomorphism $\Rab$ of the solid torus $D^2 \ti S^1$
by
\[
    \Rab : D^2 \ti S^1 \to D^2 \ti S^1, \quad (z,w) \mapsto (r_a^b(z), r_a(w)).
\]
This generates a cyclic group of order $a$ acting freely on $D^2 \ti S^1$
and the quotient
\[
    \Uab := D^2 \ti S^1 / \Rab
\]
is again a solid torus.
Consider the projection $\Pi : D^2 \ti S^1 \to D^2$ onto the first coordinate.
The fibers of this projection are circles and this fibering is invariant
under $\Rab.$ Hence it defines a fibering of $\Uab$ by circles.
The solid torus $\Uab$ along with this fibering by circles is called
is called a \emph{standard fibered torus}.
The fibering may also be viewed directly as the fibers of a map
from $\Uab$ to $D_a = D / r_a.$

In the special case that $a = 1,$
$\Rab$ is the identity map
and $\Uab = D^2 \ti S^1$ has a trivial fibering. We call this a
\emph{trivial fibered torus}.

\medskip{}

Let $M$ be a closed 3-manifold along with a decomposition of $M$ into a disjoint
union of circles.
We say that $M$ is a \emph{Seifert fiber space}
if for every $x \in M$ there is a solid torus $U$ embedded in $M$
with $x$ in its interior such that $U$ is a union of circles
and these circles give $U$ the structure of a standard fibered torus.
That is, there are integers $a$ and $b$ and a diffeomorphism
from $\Uab$ to $U$ which takes circles to circles.

Let $\Sig$ be the topological space defined by quotienting $M$ along these
circles.
Then any standard fibered torus $U$ embedded in $M$
quotients down to a subset of $\Sig$
with the same topological and smooth structure
as the quotiented disk $D_a.$
From this, one may show that the Seifert fibering on $M$
induces the structure of an orbifold on $\Sig.$
The projection $\pi : M \to \Sig$ determines the fibering of circles on $M,$
and so we will often refer to a \emph{Seifert fibering} as given
by a map $M \to \Sig$
where $M$ is a closed 3-manifold and $\Sig$ is an orbifold.
If $x$ is a cone point in $\Sig,$ then the fiber $\pi \inv(x)$ is called
an \emph{exceptional fiber}.
If $x$ is not a cone point, then $\pi \inv(x)$ is called a \emph{regular fiber}.

\medskip{}

We now outline the standard method of construction of a Seifert fiber space
by gluing solid tori into a trivial circle bundle with boundary.
This largely follows the exposition in \cite{jn1983lectures}.
Let $g \ge 0$ be an integer and let $(\al_1, \bt_1), \ldots, (\al_n, \bt_n)$
be such that for each $i, \al_i$ and $\bt_i$ are coprime integers and $\al_i \ge 1.$
Let $F$ be a closed surface of genus $g$ and remove $n$ disjoint open disks from $F$
to produce
$F_0 = F \sans (D_1^2 \cup \ldots \cup D_n^2).$
Let $M_0 = F_0 \ti S^1$ be the trivial circle bundle over $F_0$
and write
\[
    \del M_0 = (S_1^1 \ti S^1) \cup \ldots \cup (S_n^1 \ti S^1).
\]
Define
\begin{align*}
    R
        &= F_0 \ti \{1\},
    \\
    H_i
        &= \{1\} \ti S^1 \subof S^1_i \ti S^1, \qandq
    \\
    C_i
        &= R \cap (S^1_i \ti S^1) = S^1_i \ti \{1\}.
\end{align*}

Here,
$H_i$ is a vertical fiber on the boundary of $M_0$
and $C_i$ is a horizontal circle around the excised disk.
We assume $C_i$ has the same orientation as the boundary of the excised disk.
This means that the boundary components of $R$ have the opposite orientation as
the circles $C_i.$

Let $T_i = D^2 \ti S^1$ be a solid torus.
It has a meridian $M_i = S^1 \ti \{1\} \subof \del T_i$ which bounds a disk inside
$T_i$ and a longitude $L_i = \{1\} \ti S^1 \subof \del T_i.$
Note that $M_i$ and $L_i$ form a basis for the first homology group
of $\del T_i$
and that $C_i$ and $H_i$ form a basis for the first homology group of
$S^1_i \ti S^1 \subof \del M_0.$
At times, we consider rational linear combinations of elements of this
homology,
and so we use homology with rational coefficients throughout.

To glue $T_i$ into $M_0,$ we choose a linear map identifying the two
tori and such that $M_i \sim \al_i C_i + \bt_i H_i$ in homology.
That is, we choose integers $\al'_i$ and $\bt'_i$ such that the matrix
\[
    \begin{pmatrix}
        \al_i & \bt_i \\
        \al'_i & \bt'_i
    \end{pmatrix}  \]
lies in $SL(2, \bbZ)$ and use this as the gluing map.
By a slight abuse of notation, we write
\[
    \begin{pmatrix}
        M_i \\
        L_i \end{pmatrix}
    =
    \begin{pmatrix}
        \al_i & \bt_i \\
        \al'_i & \bt'_i \end{pmatrix}
    \begin{pmatrix}
        C_i \\
        H_i
    \end{pmatrix}  \]
to concisely express how the circles are related in homology.
Once $T_i$ is glued into the manifold, there is a unique way to extend
the fibering to $T_i$ in such a way that $T_i$ is a standard fibered torus.

Note that the longitude of the solid torus is not well defined
and we could replace $L_i$ with a curve homologous to $L_i + k M_i$
for any integer $k.$
The gluing matrix is not unique
and we could replace $\al'_i$ and $\bt'_i$ by $\al'_i + k \al_i$
and $\bt'_i + k \bt_i.$
These, in fact, correspond to the same ambiguity
and they have no impact on the topology of the resulting manifold.

After gluing in all of the tori, the result is
a closed oriented 3-manifold.
We denote this manifold equipped with this Seifert fibering as
\[
    M(g; (\al_1, \bt_1), \ \ldots, \ (\al_n, \bt_n)).
\]

We now consider the case where $g < 0,$
which denotes a Seifert fibering over a non-orientable orbifold.
Readers only interested in Seifert fiberings over
oriented base orbifolds may safely skip over this paragraph.
Let $g < 0$ and let $F$ be a closed non-orientable surface of genus $|g|.$
Let $F_0 = F \sans (D_1^2 \cup \ldots \cup D_n^2).$
Let $G_0$ be the orientable double cover of $F_0$
and let $\tau : G_0 \to G_0$ be an involution such that
$F_0 = G_0 / \tau.$
Define an involution $\phi$ on $G_0 \ti S^1$ by
$\phi(g, z) = (\tau(g), \bar z)$ where complex conjugation $z \mapsto \bar z$
is an involution of the circle.
Then define a twisted circle bundle over $F_0$ by $M_0 = (G_0 \ti S^1) / \phi.$
Define $R$ as the image of $G_0 \ti \{1\}$ under this quotient.
Note that while $M_0$ is orientable,
the fibers are neither orientable nor transversely orientable
on all of $M_0.$
However, we can find a subset $M_1$ of $M_0$ which is a union of
fibers, contains the boundary of $M_0,$ and on which the fibers are
both orientable and transversely orientable.
This subset $M_1$ may be identified with $F_1 \ti S^1$
where $F_1$ is an orientable subset of $F_0$
and such that
\begin{math}
    R \cap M_1 = F_1 \ti \{1\}.
\end{math}
Under this identification, the construction of the Seifert fibering
by gluing proceeds exactly as before.
Because of the existence of this subset $M_1,$
we also freely assume in the proofs in the remainder of this section
that the fibering is always orientable.

\medskip{}

Two Seifert fiberings $M_1 \to \Sig_1$ and $M_2 \to \Sig_2$ are \emph{isomorphic}
if there is a diffeomorphism from $M_1$ to $M_2$ which takes fibers to fibers.
Every Seifert fibering $M \to \Sig$ of an oriented 3-manifold 
is isomorphic to one constructed by gluing in tori as above,
and so we define the \emph{Seifert invariant}
of the fibering as
\[
    (g; (\al_1, \bt_1), \ \ldots, \ (\al_n, \bt_n)).
\]
Two invariants define the same fibering if and only if
one can be transformed into the other by changes of the following form:
\begin{enumerate}
    \item altering each 
    $\bt_i/\al_i$
    by an integer, but keeping $\sum \bt_i/\al_i$ fixed,
    \item
    re-ordering the pairs $(\al_i,\bt_i)$, and
    \item
    inserting or removing pairs of the form $(1,0)$.
\end{enumerate}
For proofs of the above assertions,
see \cite[\S1]{jn1983lectures} or \cite[\S2]{hat2007notes}.

The \emph{Euler number} of a fibering is defined as
\[
    e(M \to \Sig) = - \sum_{i=0}^n \bt_i/\al_i.
\]
One can see that none of the above transformations to the Seifert invariant
will affect the Euler number.

It is possible to extend the notation to define Seifert invariants
for fiberings on non-orientable 3-manifolds.
However, this complicates the notation and is not useful for the current
paper, so we only consider invariants for oriented manifolds.

For most Seifert fiber spaces, the topology of the 3-manifold
uniquely determines the fibering up to isomorphism.
However, there are a few important exceptions, such as for lens spaces.
We discuss this non-uniqueness in detail in \cref{sec:lens}.

\medskip{}

Recall that the unit tangent bundle $\UTSig$ of an orbifold $\Sig$
is a closed 3-manifold.
Further,
the unit tangent bundle of a quotiented disk $D_a = D^2 / r_a$
is a standard fibered torus where the fibers are given
by the projection $T D_a \to D_a.$
From this, one can see that the projection $\UTSig \to \Sig$
gives $\UTSig$ a canonical Seifert fibering.

\begin{prop} \label{prop:UTinvt}
    For an orbifold $\Sig$ with
    underlying topological surface $\Sig_0$ of genus $g$
    and cone points of order $\al_1, \ldots, \al_n,$
    the unit tangent bundle has Seifert invariant
    \[
        (g; \, (1, n - \chi(\Sig_0)), \, (\al_1, -1), \ldots, (\al_n, -1)).
    \] \end{prop}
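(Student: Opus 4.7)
Proof proposal: I would apply the gluing construction from \cref{sec:seifert} directly to the canonical Seifert fibering $\UTSig \to \Sig$. Excise $n+1$ disjoint closed disks $D_0, D_1, \ldots, D_n$ from $\Sig_0$, where $D_i$ contains the $i$-th cone point for $i \ge 1$ and $D_0$ surrounds a regular point; set $F_0 = \Sig_0 \sans \bigcup_i D_i$. Since $F_0$ has nonempty boundary, choose a nonvanishing section $\tau$ of $\UTSig|_{F_0}$, identifying $\UTSig|_{F_0} \cong F_0 \ti S^1 = M_0$. The task reduces to determining the Seifert pair $(\al_i, \bt_i)$ contributed by each solid torus $\UTSig|_{D_i}$.

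For $i \ge 1$, work in the orbifold chart $D_{\al_i} = D^2/r_{\al_i}$. Since $Tr_{\al_i}$ acts on $U T D^2 \cong D^2 \ti S^1$ as $R_{\al_i, 1}$, we have $\UTSig|_{D_i} = U_{\al_i, 1}$. On $\del U_{\al_i, 1}$ introduce coordinates $(t, \psi)$, where $t$ parametrizes $\del D_i$ once around and $\psi$ is the angle of a unit tangent vector relative to the $r_{\al_i}$-invariant outward radial frame on $D_{\al_i} \sans \{0\}$. The change of coordinates $(\theta, \phi) \mapsto (t, \psi) = (\al_i \theta, \phi - \theta)$ from the cover sends the meridian $(1, 0)$ to $(\al_i, -1)$ and the regular fiber $(0, 1)$ to $(0, 1)$. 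Matching $\del U_{\al_i, 1}$ to $\del D_i \ti S^1 \subof \del M_0$ via $\tau$, the fiber pairs with $H_i$ and $\del_t$ with $C_i + w_i H_i$, where $w_i \in \bbZ$ is the winding number of the outward radial relative to $\tau$ along $\del D_i$. Hence the meridian corresponds to $\al_i C_i + (\al_i w_i - 1) H_i$, giving the Seifert pair $(\al_i, \al_i w_i - 1)$. A simpler analogous calculation for the trivial fibered torus $\UTSig|_{D_0}$ yields the pair $(1, w_0)$, where $w_0$ is the winding of a flat frame on $D_0$ relative to $\tau$ along $\del D_0$.

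Applying the transformation from \cref{sec:seifert} that alters each $\bt_i/\al_i$ by an integer while preserving $\sum \bt_i/\al_i$, I subtract $\al_i w_i$ from $\bt_i$ for each $i \ge 1$ and compensate by adding $\sum_{i \ge 1} w_i$ to $\bt_0$, producing the invariant $(g; (1, w_0 + \sum_{i \ge 1} w_i), (\al_1, -1), \ldots, (\al_n, -1))$. It remains to show $w_0 + \sum_{i \ge 1} w_i = n - \chi(\Sig_0)$. For this, I would extend $\tau$ to a smooth vector field on $\Sig_0$ (viewed just as a topological surface with some choice of smooth structure) and apply Poincar\'e--Hopf: the zero inside $D_0$ has index $-w_0$, while a change-of-variable using $\pi(z) = z^{\al_i}$, relating the orbifold outward radial to an $\Sig_0$-flat frame on $D_i$, gives index $1 - w_i$ inside $D_i$ for $i \ge 1$. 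Summing yields $-w_0 + \sum_{i \ge 1}(1 - w_i) = \chi(\Sig_0)$, which rearranges to the claim.

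The main obstacle is orientation and coordinate bookkeeping on $\del U_{\al_i, 1}$, especially respecting the convention that $C_i$ is oriented as $\del D_i$ (opposite to $\del F_0$). I would sanity-check the final formula against $\UT \bbS^2 = \RP^3$, which should give $(0; (1, -2))$, and $\UT \bbT^2 = \bbT^3$, which should give $(1; (1, 0))$. For the non-orientable case $g < 0$, I would restrict to the transversely-orientable subset $M_1 \subof M_0$ from \cref{sec:seifert}, reducing to the orientable calculation.
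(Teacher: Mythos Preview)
Your argument is correct, and the sanity checks against $\UT S^2$ and $\UT\bbT^2$ confirm that your sign and orientation conventions are consistent with the paper's. In particular, the identification $\UT D_{\al_i} = U_{\al_i,1}$ is right (since $T r_a$ acts as $(z,w)\mapsto(r_a(z),r_a(w))=R_{a,1}(z,w)$ in the Euclidean trivialization), and your index computation $1-w_i$ at a cone point follows cleanly from the observation that the map $z\mapsto z^{\al_i}$ carries the orbifold outward radial to the ordinary outward radial in the $\Sig_0$-chart.

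Your route differs genuinely from the paper's. The paper proceeds incrementally: it first computes the invariant for $\UT S^2$ by following a north-pointing compass around the poles, then passes to quotients of the sphere to isolate the contribution $(\al,-1)$ of a single cone point, then handles an arbitrary surface by placing sinks, sources, and saddles and invoking Poincar\'e--Hopf, and finally treats a general orbifold by positioning the cone points at sinks or sources and swapping each $(1,-1)$ for $(\al_i,-1)$. You instead set up the gluing construction in one shot with an arbitrary section $\tau$ on $F_0$, read off each meridian in explicit $(t,\psi)$ coordinates on $\del U_{\al_i,1}$, and defer Poincar\'e--Hopf to the very end to evaluate $\sum w_i$. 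Your approach is more uniform---no special vector field is chosen and no case-by-case build-up is needed---at the cost of the coordinate bookkeeping you flag. The paper's approach trades that bookkeeping for a more geometric picture (the rotating compass needle) and makes the local effect of introducing a cone point transparent as a surgery on the invariant.
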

This fact is stated several times in the literature.
See, for instance, \cite[\S $5$]{ehn1981transverse}.
However, we know of no place that gives a complete proof
including the case of bad orbifolds,
and so
we give a full proof here.

\begin{proof}
    We will first consider the sphere and then certain quotients
    of the sphere before handling general surfaces and orbifolds.

    For the unit tangent bundle of $S^2,$
    travelling once around a circle in the fibering corresponds to
    staying in one place on the globe,
    starting facing a certain direction,
    and then rotating once around
    counter clockwise (CCW).
    Consider a unit vector field on the 2-sphere defined everywhere
    except on small disks centered at the north and south poles.
    This unit vector field always points north
    and plays the role of $R$ in the construction of a Seifert fibering above.

    Looking down on the north pole in a small chart,
    we may trivialize the unit tangent bundle
    in this chart.
    For instance, we may take a section of the unit tangent bundle
    to be a vector field
    which always points ``up'' inside this chart.
    If we consider a small disk around the north pole,
    then its unit tangent bundle is a solid torus
    and the meridian $M_i$ consists of vectors pointing ``up''
    everywhere on the circle which bounds the disk.
    If we walk CCW around this circle once,
    then a north-pointing compass needle will make a complete CCW rotation.
    From this, one can see that $C_i$ is homologous to $M_i + H_i.$
    Similarly at the south pole, if we walk CCW once
    around the circle, then the compass needle will rotate once CCW.
    and so $C_i \sim M_i + H_i$ for this disk as well.
    Together, these show that the Seifert invariant of $\UT S^2$
    is
    \begin{math}
        (0; (1, -1), (1, -1)).
    \end{math}
    \smallskip{}

    Now let $\al > 1$ be an integer and the consider
    a rotation of order $\al$ of the sphere such that the
    quotient by this rotation is an orbifold $\Sig$
    and each of the north and south
    poles of the sphere quotients down to a cone point of order $\al.$
    This quotient defines a map $\pi : U T S^2 \to U T \Sig$
    and one can verify that this a smooth covering map
    between manifolds.
    In what follows, we use uppercase letters
    $R, C_i, H_i, M_i, L_i$
    for curves
    and surfaces in $U T S^2$ and lowercase letters
    $r, c_i, h_i, m_i, \ell_i$
    for corresponding objects in $U T \Sig.$

    The north-pointing vector field is invariant under the rotation,
    and so it quotients down a vector field defined
    everywhere on the orbifold except for small disks
    about each of the cone point.
    The section $R$ defined in the previous case
    covers a section $r$
    defined everywhere except on disks about the two cone points
    and the covering $R \to r$ is of degree $\al.$

    Let $c_i$ denote a boundary circle of $r,$
    but with opposite orientation.
    It is $\al$-fold covered by $C_i.$
    Let $h_i$ denote a regular fiber which is the image of $H_i$
    under the quotient.
    The north pole's meridional disk on $\UT S^2$ maps down injectively
    to a meridional disk on $\UTSig$, and
    the boundary $m_i$ of this disk
    wraps $\al$ times around the cone point in $\Sig.$
    Then
    \[    
        \pi(C_i) = \al c_i, \quad \pi(H_i) = h_i, \qandq \pi(M_i) = m_i
    \]
    in homology.
    It follows from $C_i \sim H_i + M_i$ that $\al c_i$ is homologous to $h_i + m_i.$
    The exact same relations hold at the south pole.
    Hence, $m_i \sim \al c_i - h_i$ at both poles
    and the Seifert invariant for $\UTSig$ is
    \begin{math}
        (0; (\al, -1), (\al, -1)).
    \end{math}
    \smallskip{}

    Now let $\Sig$ be a surface of any genus.
    Put a vector field on $\Sig$ which is non-zero
    everywhere except a finite set consisting
    of sinks, sources, and saddles.
    We can define small disks about each
    of the critical points and each disk yields a solid torus
    in the unit tangent bundle.
    As with the case of the sphere,
    each of the sinks and sources has a gluing
    with $M_i \sim C_i + H_i.$
    A ``compass'' following the vector field around
    a saddle will rotate once clockwise, and so
    one can show that a saddle corresponds to a gluing
    with $M_i \sim C_i - H_i$
    Hence, the circle bundle has an Euler number
    which is the number of sinks and sources minus the number of saddles.
    The Poincar\'e-Hopf theorem tells us that this is exactly
    the Euler characteristic of the surface.

    \smallskip{}

    Finally, consider the general case of an orbifold $\Sig.$
    Consider a vector field on the underlying surface $\Sig_0$ 
    which has finitely many sinks, sources, and saddles.
    Further assume that every cone point in $\Sig$
    corresponds to a point in $\Sig_0$ which is a sink or source for the flow.
    Such a vector field may be produced, for instance,
    by considering a gradient flow where all of the cone points 
    correspond to local minima or maxima.
    The previous case tells us that $\UT \Sig_0$
    has a Seifert invariant which may be written as
    \[    
        (g; \  (1, n - \chi(\Sig_0)), \ 
        \underbrace{(1, -1), \, \ldots, \, (1, -1)}_{n \text{ times}}).
    \]
    Based on our above analysis of the 2-sphere and its quotient,
    one sees that
    the effect of removing a point in a surface
    and replacing it by a cone point of order $\al_i$
    is to remove a pair $(1, -1)$ and replace it
    by $(\al_i, -1)$ in the Seifert invariant.
    This then proves the desired result.
\end{proof}
%
%
\begin{cor} \label{cor:evchi}
    The equality $e(\UTSig \to \Sig) = \chi(\Sig)$
    holds for any orbifold.
\end{cor}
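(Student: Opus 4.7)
The proof should be a direct calculation combining \cref{prop:UTinvt} with the two definitions already given in the paper: the Euler number of a Seifert fibering and the Euler characteristic of an orbifold. So the plan is essentially to substitute and simplify.

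First, I would invoke \cref{prop:UTinvt} to write the Seifert invariant of $\UTSig$ as
\[
(g; \, (1, n - \chi(\Sig_0)), \, (\al_1, -1), \ldots, (\al_n, -1)),
\]
where $g$ is the genus of $\Sig_0$ and $\al_1,\ldots,\al_n$ are the orders of the cone points. Next I would apply the formula $e(M \to \Sig) = -\sum \bt_i/\al_i$ directly to these pairs, which gives
\[
e(\UTSig \to \Sig) = -\frac{n - \chi(\Sig_0)}{1} - \sum_{i=1}^n \frac{-1}{\al_i} = \chi(\Sig_0) - n + \sum_{i=1}^n \frac{1}{\al_i}.
\]
Then I would rearrange the right-hand side as $\chi(\Sig_0) - \sum_{i=1}^n(1 - 1/\al_i)$, which is exactly the definition of $\chi(\Sig)$ given earlier in \cref{sec:orbifold}.

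There is no real obstacle here since \cref{prop:UTinvt} is doing all the heavy lifting; the corollary is just bookkeeping. The only minor subtlety worth flagging is that the definition of the Euler number in the paper is stated for oriented manifolds and via the Seifert invariant, so in the non-orientable base case one should note (as the paper already does for Seifert invariants themselves) that the computation still makes sense on the orientable subset used to define the invariants, so the identity is unaffected.
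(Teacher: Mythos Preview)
Your proposal is correct and matches the paper's own proof, which simply states that the result follows directly from \cref{prop:UTinvt} and the definitions of Euler number and Euler characteristic. You have merely written out the substitution explicitly, and your side remark about the non-orientable base is harmless since, as the paper notes, $\UTSig$ is always oriented and so its Seifert invariant (and hence its Euler number) is well defined.
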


\begin{proof}
    This follows directly from \cref{prop:UTinvt}
    and the definitions of Euler number and Euler characteristic.
\end{proof}

Suppose $M_1 \to \Sig$ and $M_2 \to \Sig$
are two Seifert fiberings
over the same orbifold.
We call a covering map $u:M_1 \to M_2$ a \emph{fiberwise covering}
if it quotients down to the identity map on $\Sig$.
That is, the following diagram commutes:
\[
    \simpleCD{M_1}{u}{M_2}{}{}{}{\Sig}{\id}{\Sig.}
\]

As described in \cite[\S2]{jn1983lectures},
a Seifert fibering $M \to \Sig$
may be viewed as the action of $S^1$ on the manifold
where the orbits of the action are fibers of the fibering.
One may also think of the fibers as being equipped
with a metric such that every regular fiber has length exactly one.
The metric varies continuously and so an exceptional
fiber will then have a length of $1/\al_i.$
The $S^1$ action then corresponds to moving along the fibers at constant speed.
A discrete subgroup of $S^1$ will be a cyclic group
$\bbZ / d$ and we may consider the quotient of $M$ by $\bbZ / d.$

\begin{prop} \label{prop:poscoverd}
    Suppose $M$ is a Seifert fiber space with invariant
    \[
        (g; (\al_1, \bt_1), \ldots, (\al_n, \bt_n))
    \]
    and $d$ is a positive integer coprime to $\al_i$
    for all $i.$
    Then the quotient $M / (\bbZ / d)$ is a Seifert fiber space
    with invariant
    \[
        (g; (\al_1, d \bt_1), \ldots, (\al_n, d \bt_n)).
    \]
    Moreover, any fiberwise covering of degree $d \ge 1$ 
    between oriented
    Seifert fiber spaces is of this form.
\end{prop}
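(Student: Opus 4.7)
My strategy is to work with the explicit construction of Seifert fiber spaces by gluing standard fibered tori, reviewed earlier in the paper. Take the action of $\bbZ/d$ on $M$ to be the restriction of the Seifert $S^1$-action to its cyclic subgroup of order $d$. On the core of any exceptional fiber the stabilizer in $S^1$ has order $\al_i$; since $d$ is coprime to every $\al_i$, the $\bbZ/d$-action is free on all fibers, exceptional and regular alike. Hence $M' := M/(\bbZ/d)$ is a closed oriented 3-manifold, and the induced action of $S^1/(\bbZ/d) \cong S^1$ makes $M'$ into a Seifert fiber space over the same base $\Sig$, with the same genus $g$ and the same cone orders $\al_i$.

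To pin down the new numerators, I would analyze the quotient on each piece of the decomposition $M = M_0 \cup T_1 \cup \cdots \cup T_n$. On $M_0 = F_0 \times S^1$ the action is rotation on the $S^1$-factor, and the quotient may be identified with $F_0 \times S^1$ again via $w \mapsto w^d$, giving $q_*(C_i) = c_i$ and $q_*(H_i) = d\, h_i$ on the homology of each boundary torus. On $T_i = D^2 \times S^1$ the Seifert $S^1$-action takes the form $e^{it}\cdot(z,w) = (e^{-i\al'_i t} z,\, e^{i\al_i t} w)$, chosen so that its orbit on the boundary is homologous to $H_i = -\al'_i M_i + \al_i L_i$. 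I would then compute the images of the standard meridian $M_i$ and longitude $L_i$ on the boundary of $T_i' = T_i/(\bbZ/d)$, solve for the new meridian $m_i$ in terms of $c_i$ and $h_i$ using the inverted gluing matrix, and verify the relation $m_i \sim \al_i c_i + d\bt_i h_i$. This yields the Seifert invariant $(g;\,(\al_1, d\bt_1), \ldots, (\al_n, d\bt_n))$ for the quotient.

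For the second assertion, suppose $p \colon M_1 \to M_2$ is a fiberwise covering of degree $d$. Its restriction to any fiber is itself a covering of circles, and because the base orbifold is the same for both fiberings, the preimage of every fiber of $M_2$ is a single fiber of $M_1$; hence this restriction has degree $d$ on every fiber. A connected $d$-fold cover of a circle is regular with deck group $\bbZ/d$ acting by rotation, and these fiberwise deck transformations piece together into a global free $\bbZ/d$-action on $M_1$ by Seifert isomorphisms lying inside the Seifert $S^1$ (the point being that an orientation-preserving rotation of each fiber of constant order $d$, depending continuously on the base, must come from a single element of the Seifert $S^1$). Freeness on every exceptional fiber forces $\gcd(d,\al_i) = 1$, after which $M_2 = M_1/(\bbZ/d)$ and we are in the first part of the statement. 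The main technical hurdle I expect is the middle computation---keeping orientation and gluing conventions consistent while tracking boundary homology through the quotient---though the identity $e(M') = d\cdot e(M)$ for Euler numbers provides a useful cross-check.
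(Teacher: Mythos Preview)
Your approach is correct and, for the first claim, essentially parallels the paper's: both track the boundary homology classes $C_i, H_i, M_i, L_i$ through the quotient and read off the new gluing data. The paper streamlines the solid-torus step by observing directly that a meridional disk in $M$ maps injectively to a meridional disk downstairs, so $\pi(M_i)=m_i$ in homology without writing down any explicit action formula; the longitude then satisfies $\pi(L_i + k_i M_i) = d\,\ell_i$ for an unknown integer $k_i$, and the resulting matrix identity forces $\tfrac{1}{d}(\al'_i + k_i\al_i)$ to be an integer automatically because the quotient is already known to be a Seifert fiber space. This is a bit cleaner than your proposed coordinate computation on $T_i$, though the two routes are equivalent.

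For the ``moreover'' clause the paper argues differently from your deck-transformation sketch. Rather than assembling fiberwise deck transformations into a global $\bbZ/d$-action and then arguing it sits inside the Seifert $S^1$, the paper equips the fibers of $M_2$ with a metric giving every regular fiber length exactly $1/d$, lifts this by the covering to a metric on $M_1$ in which every regular fiber has length $1$, and then observes that the map $M_1 \to M_2$ coincides with the quotient $M_1 \to M_1/(\bbZ/d)$ coming from the Seifert action. This sidesteps the continuity and globalization issues you flag in your parenthetical and is slightly slicker; your argument also works, but needs an extra sentence to justify that the fiberwise rotations patch continuously over the base.
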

\begin{remark}
    A very similar result is stated as
    \cite[Proposition 2.5]{jn1983lectures},
    but without any distinction between the topological and smooth
    categories of manifolds.
    There, they allow the case where $d$ and $\al_i$
    share a common factor.
    In this case, $\bbZ / d$ does not act freely on $M.$
    Further, the quotient destroys the smooth structure of $M$
    and so $M / (\bbZ / d)$ is not a smooth manifold.
    As we are only concerned with smooth fiberwise coverings in the current
    paper, we do not allow such quotients.
\end{remark}
To prove \cref{prop:poscoverd},
we first consider the effect of such quotienting
on a standard fibered torus.
For an integer $d > 1,$ define
\[
    Q_d : D^2 \ti S^1 \to D^2 \ti S^1, \quad (z,w) \mapsto (z, r_d(w))
\]
and note that this map commutes with $\Rab.$

\begin{lemma} \label{lemma:quotienttorus}
    Consider a standard fibered torus $\Uab = D^2 \ti S^1 / \Rab$ with $a > 1$
    and the map $Q_d$ for some $d > 1.$
    \begin{enumerate}
        \item If $a$ and $d$ are coprime,
        then the group $\langle \Rab, Q_d \rangle$
        acts freely on $D^2 \ti S^1$ and the quotient
        is a standard fibered torus.
        \item
        If $a$ and $d$ are not coprime,
        then the group $\langle \Rab, Q_d \rangle$
        does not act freely on $D^2 \ti S^1$ and the quotient
        does not have the structure of
        a smooth manifold with boundary.
    \end{enumerate} \end{lemma}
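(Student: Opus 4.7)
The plan is to study the action of $G = \langle \Rab, Q_d \rangle$ by exploiting the fact that $\Rab$ and $Q_d$ commute, so $G$ is a finite abelian group in which a typical element has the form $\Rab^i Q_d^j$ acting as
$(z,w) \mapsto (r_a^{bi}(z),\, r_a^i r_d^j(w))$.
Both assertions reduce to understanding the fixed points of such elements. Off the central circle $\{0\} \times S^1$, a fixed point forces $r_a^{bi}$ to be the identity on $D^2$, which (using $\gcd(a,b) = 1$) gives $a \mid i$, after which $r_d^j(w) = w$ forces $d \mid j$. On the central circle the disk condition is automatic, and the requirement becomes $r_a^i r_d^j = \id$ on $S^1$, i.e., the congruence $id + ja \equiv 0 \pmod{ad}$.

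For part $(1)$, when $\gcd(a,d) = 1$ the congruence combined with $0 \le i < a$ and $0 \le j < d$ forces $i = j = 0$, so only the identity of $G$ has fixed points and $G$ acts freely. The quotient is then a smooth manifold with boundary, and since $Q_d$ preserves the projection onto the first coordinate, the quotient still fibers by circles over $D_a = D^2 / r_a$. Being a finite free quotient of the solid torus $\Uab$, it has infinite cyclic fundamental group and torus boundary, hence is itself a solid torus; a solid torus fibered by circles over $D_a$ is, by definition, a standard fibered torus.

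For part $(2)$, set $g = \gcd(a,d) > 1$ and consider the element $\Rab^{a/g} Q_d^{-d/g} \in G$. Its exponents satisfy $id + ja = 0$, so it fixes every point of $\{0\} \times S^1$, and it is nontrivial because $a/g \not\equiv 0 \pmod a$; hence $G$ does not act freely. To rule out a smooth manifold structure on the quotient I would linearize the stabilizer of a point $(0, w_0)$: the $S^1$ direction is pointwise fixed (the element is already the identity on $S^1$), while the $D^2$ direction is rotated by $r_a^{ab/g}$, a rotation of order $g$ (using $\gcd(a,b) = 1$, so $\gcd(b, g) = 1$). A neighborhood of the image of the central circle in the quotient therefore looks like $D_g$ times an interval, and since $D_g$ is not a smooth surface when $g > 1$, the quotient fails to be a smooth manifold with boundary.

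The main bookkeeping step I expect is in part $(1)$, namely identifying the free quotient of the solid torus explicitly as a solid torus; I would do this either via the fundamental group computation for compact orientable $3$-manifolds with torus boundary, or by writing down an explicit diffeomorphism that conjugates a generator of $G$ into the form $R_{a'b'}$ for some coprime $a', b'$ via a Bézout identity coming from $\gcd(a, d) = 1$.
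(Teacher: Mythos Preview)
Your proposal is correct, and for part~(2) it is essentially the paper's argument with more detail: the paper likewise produces a nontrivial element $\Rab^{n}Q_d^{m}$ (with $nd+ma=0$) fixing the central circle, and simply asserts that the quotient is not a smooth manifold with boundary, whereas you go on to identify the stabilizer and the local model $D_g \times I$.

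For part~(1), the paper takes your stated \emph{alternative} route and makes it the entire argument. Using a B\'ezout relation $nd+ma=1$, it sets $F=\Rab^{n}\circ Q_d^{m}$ and checks directly that $F=R_{\hat a\hat b}$ with $\hat a=ad$ and $\hat b=nbd$, and that $F^a=Q_d$, $F^d=\Rab$. Thus $G$ is cyclic, generated by a single $R_{\hat a\hat b}$, so the quotient is \emph{by construction} the standard fibered torus $U_{\hat a\hat b}$; freeness and the solid-torus identification come for free. This is shorter than your primary route and sidesteps two small issues in your write-up: the claim that the quotient has infinite cyclic $\pi_1$ needs a one-line justification (e.g.\ deformation retract onto the core circle, which is a free quotient of $S^1$), and the base orbifold of the quotient is $D_{ad}$ rather than $D_a$ --- the cone order jumps to $ad$, which is exactly what the B\'ezout computation reveals. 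Your phrase ``a solid torus fibered by circles over $D_a$ is, by definition, a standard fibered torus'' is therefore not quite right as stated, though the conclusion is.
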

\begin{proof}
    First, suppose $a$ and $d$ are coprime.
    Then, there are integers $n$ and $m$ such that
    \begin{math}
        n d + m a = 1.
    \end{math}
    Define $F$ as the composition $\Rab^n \circ Q_d^m.$
    Using
    \[
        n \left( \frac{b}{a}, \frac{1}{a} \right)
        +
        m \left( 0, \frac{1}{d} \right)
        =
        \left( \frac{n b}{a}, \frac{1}{a d} \right)
        \in
        \bbR^2,
    \]
%
%
%
%
%
%
%
    one can show that
    \[
        F(z,w) = (r_a^{n b}(z), r_{a d}(w)) = R_{\hat a \hat b}(z, w)
    \]
    where $\hat a = a d$ and $\hat b = n b d.$
    Further, $F^a = Q_d$ and $F^{d} = \Rab,$
    and so
    $\langle \Rab, Q_d \rangle$ is a cyclic group generated
    by $F$
    and $D^2 \ti S^1 / F$ is a standard fibered torus.

    \smallskip{}

    Now suppose $a$ and $d$ are not coprime.
    There is $0 < n < a$ such that $n d$ is a multiple of $a$,
    and further there is $m$ such that
    \begin{math}
        n d + m a = 0.
    \end{math}
    In this case,
    \[
        \Rab^n \circ Q_d^m (z,w) = (r_a^n(z), w)
    \]
    and if $z = 0,$ then $(z,w)$ will be a fixed point.
    This shows that $\langle \Rab, Q_d \rangle$
    does not act freely on the solid torus
    and that the quotient by this group
    does not have the structure of a smooth manifold with boundary.
\end{proof}
\begin{proof}
    [Proof of \cref{prop:poscoverd}]
    Suppose $M$ is a Seifert fiber space
    as in the statement of the proposition.
    By \cref{lemma:quotienttorus} and the fact that $d$ is coprime to $\al_i$
    for all $i,$
    it follows that every standard fibered torus
    embedded in $M$ quotients down to a standard fibered torus
    in $M / (\bbZ / d).$
    Hence, the latter is also a Seifert fiber space.

    Let $R, C_i, H_i, M_i,$ and $L_i$
    be as in the construction of a Seifert fibering
    by gluing as described earlier
    and use $r, c_i, h_i, m_i, \ell_i$
    for surfaces and curves in the covered manifold $M / (\bbZ / d).$
    Let $\pi : M \to M / (\bbZ / d)$
    be the quotient map.
    One can see that
    $\pi(R) = r,$
    $\pi(C_i) = c_i,$ and
    $\pi(H_i) = d h_i$
    in homology.
    For a solid torus neighbourhood of an exceptional fiber in $M,$
    a meridional disk maps injectively down to a meridional disk
    in $M / (\bbZ / d)$
    and so $\pi(M_i) = m_i.$
    A longitude $\ell_i$ in the quotient is $d$-fold covered by
    a longitude of a solid torus in $M.$
    However, we do not know precisely which longitude
    covers $\ell_i$
    and so the most we can say
    is that $\pi(L_i + k_i M_i) = d \ell_i$ for some integer $k_i.$
    Then
    \begin{align*}
        \pi
        \begin{pmatrix}
            M_i \\ L_i \end{pmatrix}
        &=
        \begin{pmatrix}
            \al_i & \bt_i \\ \al'_i & \bt'_i \end{pmatrix}
        \ \pi
        \begin{pmatrix}
            C_i \\ H_i \end{pmatrix}
        \quad \Rightarrow \quad \\
        \begin{pmatrix}
            1 & 0 \\ -k_i & d \end{pmatrix}
        \begin{pmatrix}
            m_i \\ \ell_i \end{pmatrix}
        &=
        \begin{pmatrix}
            \al_i & \bt_i \\ \al'_i & \bt'_i \end{pmatrix}
        \begin{pmatrix}
            1 & 0 \\ 0 & d \end{pmatrix}
        \begin{pmatrix}
            c_i \\ h_i \end{pmatrix}
        \quad \Rightarrow \quad \\
        \begin{pmatrix}
            m_i \\ \ell_i \end{pmatrix}
        &=
        \begin{pmatrix}
            \al_i & d \bt_i \\ \tfrac{1}{d}(\al'_i + k_i \al_i) & \bt'_i + k_i \bt_i \end{pmatrix}
        \begin{pmatrix}
            c_i \\ h_i \end{pmatrix}
        .
    \end{align*}
    This formula would make sense whether or not
    the entries are integers
    as we are considering first homology with rational coefficients.
    However, since we already know $M / (\bbZ / d)$
    is a well-defined Seifert fiber space,
    it must be that
    $\tfrac{1}{d}(\al'_i + k_i \al_i)$ 
    is an integer.
    This shows that the covered manifold has a Seifert invariant
    as stated in the proposition.

    To prove the final statement,
    suppose $M_1 \to M_2$ is a fiberwise covering of degree $d.$
    If we equip the fibers of $M_2$ with a metric
    such that every regular fiber has length exactly $1 / d,$
    this lifts by the covering to a metric on $M_1$ where
    every regular fiber has length exactly one.
    The quotient from $M_1$ to $M_1 / (\bbZ / d)$
    is then exactly the same as the quotient from $M_1$ to $M_2.$
\end{proof}
We also wish to consider fiberwise coverings of negative degree.
First, we consider the case of $d = -1.$

\begin{prop} \label{prop:orientd}
    If $M$ has Seifert invariant
    \begin{math}
        (g; (\al_1, \bt_1), \ldots, (\al_n, \bt_n)),
    \end{math}
    then $-M$ with the same fibering has invariant
    \begin{math}
        (g; (\al_1, -\bt_1), \ldots, (\al_n, -\bt_n)).
    \end{math} \end{prop}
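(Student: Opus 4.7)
My approach would be to realize $-M$ via the same gluing construction of this section, but with one of the two factors of the trivial circle bundle $F_0 \times S^1$ given the opposite orientation, and then track how the resulting transformation acts on the Seifert invariant.

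Concretely, I would keep the base surface $F_0$, the section $R$, the boundary circles $C_i$, and the solid tori $T_i$ unchanged, but reverse the orientation of the $S^1$-factor in $F_0 \times S^1$. This reverses the orientation of every fiber, so that after gluing back each $T_i$ along the same \emph{unoriented} identification of boundary tori the resulting 3-manifold is diffeomorphic to $-M$, while the set-theoretic partition into circles is exactly the original fibering. In homology on each boundary torus, the effect is to replace the basis $(C_i, H_i)$ by $(C_i, H_i^{\text{new}})$ with $H_i^{\text{new}} = -H_i$.

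Next I would observe that the meridian $M_i \subof \del T_i$ is the same curve bounding the same meridional disk, and so retains the relation $M_i \sim \al_i C_i + \bt_i H_i$. Re-expressing this in the new basis gives $M_i \sim \al_i C_i - \bt_i H_i^{\text{new}}$, so the first coordinates $\al_i \ge 1$ are preserved and $\bt_i$ is negated. To check that this yields a valid Seifert invariant in the sense of this section, I would pick as the new longitude $L_i^{\text{new}} := -L_i,$ which produces the gluing matrix
\[
    \begin{pmatrix} \al_i & -\bt_i \\ -\al'_i & \bt'_i \end{pmatrix}
\]
whose determinant is still $\al_i \bt'_i - \bt_i \al'_i = 1$, so it lies in $SL(2, \bbZ)$ with respect to the reversed orientation on $\del T_i$. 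This directly produces the claimed invariant $(g;\, (\al_1, -\bt_1), \ldots, (\al_n, -\bt_n))$.

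The main point of care is to keep sign conventions straight: meridians and longitudes are defined only up to the ambiguities $L_i \mapsto L_i + k M_i$ and overall sign, and one must choose these signs compatibly with the orientation reversal so that the gluing matrix lands in $SL(2, \bbZ)$ rather than in $GL(2, \bbZ) \sans SL(2, \bbZ)$; the choice $L_i^{\text{new}} = -L_i$ above is what achieves this. The non-orientable-base case $g < 0$ requires no new argument, since by the discussion preceding the definition of the Seifert invariant we may work inside the orientable subset $M_1 \subof M_0$ on which the fibering is both orientable and transversely orientable, reducing to the same computation.
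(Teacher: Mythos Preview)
Your proof is correct and follows essentially the same approach as the paper: reverse the orientation along the fibers while keeping the transverse orientation fixed, so that $H_i \mapsto -H_i$, $C_i \mapsto C_i$, $M_i \mapsto M_i$, and $L_i \mapsto -L_i$ (up to adding a multiple of $M_i$), and then read off the new gluing matrix. The paper phrases this via the identity map $\pi : M \to -M$ and the notation of the preceding proposition, but the computation is the same as yours.
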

\begin{proof}
    Let $M \to -M$ be the orientation-reversing identity map.
    To give $-M$ the opposite orientation as $M,$
    we may assume that the orientation along the fibers has been reversed,
    but the orientation transverse to the fibers remains the same.
    Then, keeping the same notation as in the previous proof,
    $\pi(R) = r,$
    $\pi(C_i) = c_i,$
    $\pi(H_i) = -h_i,$
    $\pi(M_i) = m_i,$ and
    $\pi(L_i + k_i M_i) = -\ell_i$ for some integer $k_i.$
    One can then calculate the Seifert invariant of $-M$
    much as in the previous proof.
\end{proof}

\begin{prop} \label{prop:coverd}
    Suppose $M$ is a Seifert fiber space with invariant
    \[
        (g; (\al_1, \bt_1), \ldots, (\al_n, \bt_n))
    \]
    and $d$ is a non-zero integer coprime to $\al_i$
    for all $i.$
    Then $M$ fiberwise covers a Seifert fiber space
    with invariant
    \[
        (g; (\al_1, d \bt_1), \ldots, (\al_n, d \bt_n)).
    \]
    Moreover, any fiberwise covering of degree $d$
    between oriented
    Seifert fiber spaces is of this form.
\end{prop}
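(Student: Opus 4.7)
The strategy is simply to combine the two previous propositions, \cref{prop:poscoverd} and \cref{prop:orientd}, which respectively handle positive-degree fiberwise coverings and the effect of reversing orientation (a degree $-1$ map). Since $d$ is non-zero, write $d = \epsilon |d|$ where $\epsilon \in \{+1,-1\}$ and $|d| \ge 1$. For the first statement, \cref{prop:poscoverd} gives a fiberwise covering of degree $|d|$ from $M$ onto $N = M/(\bbZ/|d|)$ having Seifert invariant $(g; (\al_1, |d| \bt_1), \ldots, (\al_n, |d| \bt_n))$. If $\epsilon = +1$ we are done; if $\epsilon = -1$, compose with the orientation-reversing identity $N \to -N$. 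By \cref{prop:orientd}, the target $-N$ has Seifert invariant $(g; (\al_1, -|d| \bt_1), \ldots, (\al_n, -|d| \bt_n)) = (g; (\al_1, d \bt_1), \ldots, (\al_n, d \bt_n))$, and the composition is a fiberwise covering of degree $\epsilon |d| = d$.

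For the second, ``moreover'' statement, suppose $u : M_1 \to M_2$ is a fiberwise covering of degree $d$ between oriented Seifert fiber spaces. If $d > 0$, the conclusion is \cref{prop:poscoverd}. If $d < 0$, view $u$ as a map $M_1 \to -M_2$, which is fiberwise and of positive degree $|d|$. Applying \cref{prop:poscoverd} to this map shows that $-M_2$ has Seifert invariant obtained from that of $M_1$ by multiplying each $\bt_i$ by $|d|$; by \cref{prop:orientd}, $M_2$ itself then has invariant obtained by multiplying each $\bt_i$ by $-|d| = d$, as claimed.

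The only subtlety worth checking is that the composition in the existence half is genuinely fiberwise (rather than merely fiber-preserving up to automorphism of $\Sig$): the quotient map $M \to M/(\bbZ/d)$ descends to the identity on $\Sig$ by construction, and the orientation-reversing identity on the underlying manifold $N$ likewise descends to the identity on $\Sig$, since in \cref{prop:orientd} one reverses the orientation along the fibers while leaving the transverse orientation unchanged. I do not anticipate a serious obstacle; this proposition is essentially bookkeeping on top of \cref{prop:poscoverd} and \cref{prop:orientd}, and the coprimality hypothesis on $d$ and the $\al_i$ is exactly what is needed to invoke \cref{prop:poscoverd} with $|d|$ in place of $d$.
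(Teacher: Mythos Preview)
Your proposal is correct and follows exactly the paper's approach: the paper's proof is the single sentence ``This follows from the previous two propositions,'' and you have simply spelled out the details of how \cref{prop:poscoverd} and \cref{prop:orientd} combine.
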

\begin{proof}
    This follows from the previous two propositions.
\end{proof}
\begin{cor} \label{cor:eulerd}
    If $M_1 \to \Sig$ and $M_2 \to \Sig$
    are Seifert fiberings and there is a fiberwise covering of degree $d$
    from $M_1$ to $M_2,$
    then
    \begin{math} 
        d \cdot e(M_1 \to \Sig) = e(M_2 \to \Sig).
    \end{math} \end{cor}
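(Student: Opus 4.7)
The plan is to reduce the corollary to a direct computation using \cref{prop:coverd} and the definition of the Euler number.

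First, I would fix a Seifert invariant
\[
    (g; (\al_1, \bt_1), \ldots, (\al_n, \bt_n))
\]
for $M_1 \to \Sig$. By \cref{prop:coverd}, any fiberwise covering $M_1 \to M_2$ of degree $d$ (positive or negative, with $d$ coprime to every $\al_i$, which is automatic since otherwise no such smooth covering exists by \cref{lemma:quotienttorus}) forces $M_2$ to admit a Seifert invariant
\[
    (g; (\al_1, d\bt_1), \ldots, (\al_n, d\bt_n)).
\]
In particular, $M_2$ has the same genus, the same number of exceptional fibers, and the same orders $\al_i,$ while each $\bt_i$ is scaled by $d.$

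Next I would simply unwind the definition of Euler number:
\[
    e(M_2 \to \Sig) \;=\; -\sum_i \frac{d \bt_i}{\al_i} \;=\; d \cdot \left( -\sum_i \frac{\bt_i}{\al_i} \right) \;=\; d \cdot e(M_1 \to \Sig).
\]
This gives the claimed equality.

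The only point that requires a comment is that the Euler number is well-defined independently of the representative chosen for the Seifert invariant; this was already noted after the definition of Euler number, so no further work is needed. There is essentially no obstacle here, as the bulk of the content lives in \cref{prop:coverd}, which has already been established. I would therefore present the proof as a one-line consequence of \cref{prop:coverd} together with the definition of $e(M \to \Sig),$ exactly as in the paper's own brief proof.
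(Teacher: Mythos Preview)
Your proposal is correct and matches the paper's intent: the corollary is stated immediately after \cref{prop:coverd} with no separate proof, precisely because it follows from that proposition and the definition of the Euler number in the one-line computation you give. There is nothing to add.
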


\section{General properties} \label{sec:general} 

We now consider the general properties of \hvfs{}
on Seifert fiber spaces.
Suppose $v$ is a vector field on a Seifert fiber space $M \to \Sig.$
That is, $v$ is a continuous map $v : M \to TM$ such that
$v(x)$ lies in $T_x M$ for every $x.$
We say that $v$ is \emph{horizontal} if for all $x,$ the vector
$v(x)$ is not tangent to the fiber through $x.$

\begin{prop} \label{prop:hvfmap}
    A Seifert fibering $M \to \Sig$ has a \hvf{}
    if and only if there is a continuous map $u: M \to \UTSig$
    such that
    \[
        \simpleCD{M}{u}{\UTSig}{}{}{}{\Sig}{\id}{\Sig}
    \]
    commutes.
\end{prop}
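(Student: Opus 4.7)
The plan is to pass vector fields and sections back and forth through the differential $d\pi: TM \to T\Sig$, which is well-defined on orbifold tangent spaces by passing to local uniformizing covers and has kernel equal to the tangent line of the fiber at each point.

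For the forward direction, I take a horizontal vector field $v$ and observe that the nonvertical condition forces $d\pi(v(x))$ to be nonzero in $T_{\pi(x)}\Sig$ for every $x$. After fixing an orbifold Riemannian metric on $\Sig$, I define
\[
    u(x) := \frac{d\pi(v(x))}{\|d\pi(v(x))\|} \in \UT_{\pi(x)}\Sig.
\]
Continuity of $u$ follows from that of $v$ and $d\pi$, and the diagram commutes by construction.

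For the reverse direction, I fix a Riemannian metric on $M$, producing a horizontal plane field complementary to the fibers. At any regular fiber point $x$, the restriction of $d\pi$ to the horizontal plane at $x$ is a linear isomorphism onto $T_{\pi(x)}\Sig$, so there is a unique horizontal vector $v(x)$ with $d\pi(v(x)) = u(x)$; this defines $v$ continuously away from the exceptional fibers. To extend across an exceptional fiber I work in a standard fibered torus $\Uab$ with its uniformizing cover $p: D^2 \ti S^1 \to \Uab$. The generator of $\pi_1(D^2 \ti S^1)$ projects to the exceptional fiber traversed $a$ times in $\Uab$, so the image of $\pi_1(D^2 \ti S^1)$ in $\pi_1(\UT D_a) = \bbZ$ lies in $a\bbZ$, which is exactly the image of $\pi_1(\UT D^2)$; by covering-space theory $u \circ p$ therefore admits a continuous lift $\tilde u: D^2 \ti S^1 \to \UT D^2$, giving in turn a tautological continuous horizontal vector field $\tilde v$ on the cover.

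The main obstacle is to verify that $\tilde v$ descends to $\Uab$, which amounts to the $\Rab$-equivariance $\tilde u(\Rab(z,w)) = r_a^b\, \tilde u(z,w)$. On the open dense subset $\{z \neq 0\}$ both sides are the unique lift in $\UT_{r_a^b z} D^2$ of the same element of $\UT_{[z]} D_a$, so equality is automatic. On the exceptional fiber $\{z = 0\}$ this uniqueness fails---there are $a$ admissible lifts---but both sides depend continuously on $(z,w)$, so equality on a dense subset forces equality everywhere. Hence $\tilde v$ descends to $\Uab$, providing the required extension of $v$ across the exceptional fibers.
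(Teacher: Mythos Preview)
Your approach is essentially the paper's: normalize $d\pi(v)$ for the forward direction, and for the converse lift $u$ to the model cover $D^2 \times S^1 \to \Uab$, build the vector field upstairs, and descend by equivariance. The paper packages this into two local lemmas on a standard fibered torus, while you treat the regular fibers globally first and then patch across the exceptional ones; the content is the same.

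There is one small gap in the reverse direction. The covering-space lift $\tilde u: D^2 \times S^1 \to \UT D^2$ you obtain from the lifting criterion is not automatically fiber-preserving over $D^2$, and your density argument on $\{z \neq 0\}$ tacitly uses $\tilde u(z,w) \in \UT_z D^2$: without this, the two sides $\tilde u(\Rab(z,w))$ and $r_a^b\,\tilde u(z,w)$ need not even lie in the same circle $\UT_{r_a^b z} D^2$, so the uniqueness-of-lift reasoning does not apply. The paper handles this by choosing the lift so that it sits in the correct fiber at one point; then $\pi_{D^2}\circ\tilde u$ and the first-coordinate projection $\Pi$ are both continuous lifts of the same map $D^2\times S^1 \to D_a$ through $D^2 \to D_a$, hence agree on the connected set $\{z\neq 0\}$ and then everywhere by continuity. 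Once you insert this step, your argument goes through verbatim.
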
    
To prove this, we first show the equivalence inside a
standard fibered torus.
Recall the definitions of a standard fibered torus
\begin{math}
    \Uab = D^2 \ti S^1 / \Rab
\end{math}
and a quotiented disk
\begin{math}
    D_a = D^2 / r_a.
\end{math}
Both $\Uab$ and the unit tangent bundles $\UT D_a$
have fiberings given by canonical maps $\pi : \Uab \to D_a$
and $\pi_a : \UT D_a \to D_a.$

\begin{lemma} \label{lemma:localunittan}
    Any horizontal vector field on a standard fibered torus
    $\Uab$ induces a continuous fiber-preserving map from $\Uab$ to
    the unit tangent bundle of \, $D_a$.
\end{lemma}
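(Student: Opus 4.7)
The plan is to work on the universal-type cover $D^2 \times S^1$, extract the horizontal component of the vector field by differentiating the projection onto the first factor, normalize the result with respect to a suitable metric, and then verify that the construction descends to the quotient $U_{ab}$.

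First I would lift the horizontal vector field $v$ to a continuous vector field $\tilde v$ on $D^2 \times S^1$ that is $\Rab$-invariant, i.e.\ $T\Rab \cdot \tilde v(z,w) = \tilde v(\Rab(z,w))$. Let $\Pi : D^2 \times S^1 \to D^2$ denote projection onto the first coordinate. The fibers of $\Pi$ are exactly the circles of the trivial fibering on $D^2 \times S^1$, and these project to the fibers of the Seifert fibering on $\Uab$. Consequently, $v(x)$ fails to be tangent to the fiber of $\Uab$ through $x$ if and only if $T\Pi(\tilde v(z,w)) \in T_z D^2$ is nonzero for every lift $(z,w)$ of $x$. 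Horizontality of $v$ therefore guarantees that $T\Pi \circ \tilde v$ is nowhere zero.

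Next I would fix a Riemannian metric on $D^2$ for which $r_a$ is an isometry (this is possible as discussed in \cref{sec:orbifold}) and set
\[
    \tilde u(z,w) \;=\; \frac{T\Pi(\tilde v(z,w))}{\|T\Pi(\tilde v(z,w))\|} \;\in\; \UTbbR D^2,
\]
which is continuous. To descend $\tilde u$ to $\Uab$, I would check the compatibility with the two group actions. From $\Pi \circ \Rab = r_a^b \circ \Pi$ we get $T\Pi \circ T\Rab = T r_a^b \circ T\Pi$, and because $r_a$ (hence $r_a^b$) is an isometry, normalization commutes with $T r_a^b$. Combined with the $\Rab$-invariance of $\tilde v$, this yields
\[
    \tilde u(\Rab(z,w)) \;=\; T r_a^b \cdot \tilde u(z,w).
\]
Since $a$ and $b$ are coprime, $\langle T r_a^b \rangle = \langle T r_a \rangle$ as a cyclic group of order $a$ acting on $\UTbbR D^2$. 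Therefore $\tilde u$ is equivariant with respect to the quotient that defines $\UT D_a = \UTbbR D^2 / T r_a$, and so it descends to a continuous map $u : \Uab \to \UT D_a$. By construction $\pi_a \circ u = \pi$, so $u$ is fiber-preserving. The case $a=1$ is trivial, as $r_1 = \id$ and no quotient is taken.

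The main obstacle is the equivariance check in the descent step: it requires both that $r_a$ has been chosen as an isometry (so the normalization is equivariant) and that $\gcd(a,b)=1$ (so the image group $\langle T r_a^b \rangle$ is the full group $\langle T r_a \rangle$ by which we quotient to form $\UT D_a$). Everything else is a direct consequence of the definitions of $\Uab$ and $\UT D_a$.
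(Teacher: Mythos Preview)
Your argument is correct and follows essentially the same route as the paper: lift to $D^2\times S^1$, push forward by $T\Pi$, normalize, check the equivariance $\tilde u\circ\Rab = T r_a^{\,b}\circ\tilde u$, and descend. One small remark: the coprimality of $a$ and $b$ is not actually needed for the descent, since $T r_a^{\,b}$ is already a power of $T r_a$ and hence $\langle T r_a^{\,b}\rangle\subset\langle T r_a\rangle$ automatically; the equality of the two cyclic groups is a true but superfluous observation here.
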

\begin{proof}
    A vector field $v$ on $\Uab$ may be considered as
    a continuous function
    $v : \Uab \to T \Uab.$
    Lifting the vector field to $D^2 \ti S^1,$
    we have a function
    $v_1 : D^2 \ti S^1 \to T(D^2 \ti S^1)$
    which satisfies
    \[
        v_1 \circ \Rab = T \Rab \circ v_1.
    \]
    The lifted vector field is horizontal
    and for a point $y = (z, w) \in D^2 \ti S^1$
    the vector $v_1(y)$ is not tangent to the fiber
    $\{z\} \ti S^1.$
    In particular, if $\Pi : D^2 \ti S^1 \to D^2$
    is the projection onto $D^2,$
    then 
    $T \Pi (v_1(y)) \in T_z D^2$
    is non-zero.
    Define a map
    \[    
        u_2 : D^2 \ti S^1 \to \UT D^2,
        \quad
        y \mapsto \normalize{ T \Pi(v_1(y)) }.
    \]
    One may show, using the definition of $\Rab,$
    that $u_2$ satisfies the symmetry relation
    \begin{math}
        T r^b_a \circ u_2 = u_2 \circ T \Rab.
    \end{math}
    Hence,
    $u_2$ quotients to a map $u$ from
    $\Uab = D^2 \ti S^1 / \Rab$
    to
    $\UT D_a = \UT D^2 / T r_a.$
    Moreover, as $u_2$ takes fibers to fibers,
    so does $u.$
\end{proof}
Note that the above proof works for any choice of Riemannian metric
on $D_a.$

\begin{lemma} \label{lemma:lifttohvf}
    Let $\Uab$ be a standard fibered torus,
    $P$ a plane field on $\Uab$ transverse to the fibering,
    and
    $u : \Uab \to \UT D_a$
    a continuous function for which 
    \[    
        \simpleCD{\Uab}{u}{\UT D_a}{\pi}{}{\pi_a}{\Sig}{\id}{\Sig}
    \]
    commutes.
    Then there exists a unique \hvf{} $v : \Uab \to T \Uab$
    such that for all $x \in \Uab,$
    $v(x) \in P(x)$
    and
    $T \pi(v(x)) = u(x).$
\end{lemma}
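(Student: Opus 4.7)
\emph{Plan.} The approach is to pass to the smooth cover $D^2 \ti S^1$ of $\Uab$, where the construction becomes pointwise, and then descend via equivariance. On this cover, $T\Pi : T(D^2 \ti S^1) \to T D^2$ is a linear map whose restriction to any plane transverse to the $S^1$-fibers is an isomorphism onto $T D^2$, giving a unique preimage for each vector of $T D^2$.

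First, pull back $P$ via the covering $q: D^2 \ti S^1 \to \Uab$ to an $\Rab$-invariant plane field $\tilde P$, still transverse to the $S^1$-fibers. Next, lift $u \circ q$ through the covering $p: \UT D^2 \to \UT D_a$ to a map $\tilde u: D^2 \ti S^1 \to \UT D^2$; such a lift exists because $(u \circ q)_* \pi_1(D^2 \ti S^1) = u_*(a\bbZ) \subseteq a\bbZ = p_* \pi_1(\UT D^2)$. Since $u$ covers the identity on $D_a$, after composing $\tilde u$ with a suitable deck transformation on the target we may arrange that $\tilde u(z,w) \in \UT_z D^2$ for every $(z,w) \in D^2 \ti S^1$. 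With this normalization, the requirement that $\tilde u$ descend modulo the deck group $\langle T r_a \rangle$ to the original $u$ forces the equivariance
\[
    \tilde u(\Rab(z,w)) = T r_a^b\bigl( \tilde u(z,w) \bigr),
\]
since both sides lie in $\UT_{r_a^b z} D^2$ and must agree modulo $T r_a$, while $T r_a$ has order $a$.

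Now define $\tilde v(y) := (T\Pi|_{\tilde P(y)})\inv(\tilde u(y))$ pointwise; this is continuous and is the unique vector in $\tilde P(y)$ whose $T\Pi$-image is $\tilde u(y)$. To descend $\tilde v$ to $\Uab$, I would verify $\Rab$-invariance: using $T\Rab(\tilde P(y)) = \tilde P(\Rab y)$, the equivariance of $\tilde u$ above, and the identity $T\Pi \circ T\Rab = T r_a^b \circ T\Pi$, the vector $T\Rab(\tilde v(y))$ lies in $\tilde P(\Rab y)$ and has $T\Pi$-image $\tilde u(\Rab y)$, so by pointwise uniqueness it equals $\tilde v(\Rab y)$. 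Thus $\tilde v$ descends to a continuous horizontal vector field $v$ on $\Uab$ with $v(x) \in P(x)$ and $T\pi(v(x)) = u(x)$; uniqueness of $v$ follows because any other candidate lifts to a $\tilde v'$ obeying the same pointwise defining equation on $D^2 \ti S^1$.

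The main obstacle is arranging in the lifting step that $\tilde u$ carry precisely the $b$-twisted equivariance and not some other $T r_a$-twisted equivariance: the lift is only determined up to the deck group $\langle T r_a\rangle$ on the target, so the hypothesis that $u$ covers the identity on the base $D_a$ must be used both to fix the correct normalization of $\tilde u$ and then to pin the equivariance constant to $b$ modulo $a$. Without this, the pointwise construction on the cover would fail to descend to a single-valued vector field on $\Uab$.
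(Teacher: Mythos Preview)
Your proposal is correct and follows essentially the same route as the paper: lift $u$ to a map $\tilde u:D^2\times S^1\to \UT D^2$ via the lifting criterion, normalize so that $\tilde u(z,w)\in \UT_z D^2$, deduce the $b$-twisted equivariance $\tilde u\circ\Rab = T r_a^b\circ\tilde u$ from the basepoint condition, invert $T\Pi|_{\tilde P(y)}$ pointwise to build $\tilde v$, and descend by $\Rab$-invariance. The paper phrases the equivariance step as ``$\tilde u\circ\Rab = T r_a^n\circ\tilde u$ for some $n$, and the commuting square forces $n\equiv b\bmod a$,'' which is exactly the content of your last paragraph; your formulation ``both sides lie in $\UT_{r_a^b z}D^2$ and agree modulo $\langle T r_a\rangle$'' is the same observation, valid for $z\ne 0$ and extended by continuity.
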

\begin{proof}
    Each of $\Uab$ and $\UT D_a$ is a solid torus
    with an infinite cyclic fundamental group.
    Since both covering maps $D^2 \ti S^1 \to \Uab$
    and $\UT D^2 \to \UT D_a$
    are of the same degree $a$,
    one may verify that the lifting criterion
    \cite[Proposition 1.33]{hat2002algebraic}
    applies here and $u : \Uab \to \UT D_a$
    lifts to a map $u_1 : D^2 \ti S^1 \to \UT D^2.$
    Moreover, we may choose the lift in such a way that
    \[
        \simpleCD{D^2 \ti S^1}{u_1}{\UT D^2}{}{}{}{D^2}{\id}{D^2}
    \]
    commutes.
    To see this, first choose a lift so that the above
    diagram commutes for at least one point $y \in D^2 \ti S^1,$
    and then use that $D^2 \ti S^1$ is connected
    to show that it commutes for all points.
    The lifted map satisfies
    \begin{math}
        u_1 \circ \Rab = T r^n_a \circ u_1
    \end{math}
    for some integer $n,$ and the above commutative diagram
    shows that $n \equiv b \bmod a.$
    Without loss of generality, assume $n = b.$

    The transverse plane field $P$ on $\Uab$
    lifts to a transverse plane field $P_1$ on $D^2 \ti S^1$
    which is invariant under $T \Rab.$
    Let $\Pi : D^2 \ti S^1 \to D^2$ be the canonical projection.
    By transversality, for each point $y = (z, w) \in D^2 \ti S^1$
    the tangent map
    \[
        T_y \Pi : T_y (D^2 \ti S^1) \to T_z D^2
    \]
    restricts to an isomorphism from the 2-dimensional subspace
    $P(y)$ onto $T_z D^2.$
    Define $u_2(y)$ as the unique vector in $P(y)$
    which satisfies $T \Pi (v_2(y)) = u_1(y) \in T_z D^2.$
    Doing this at all points in $D^2 \ti S^1$ defines a vector field
    $v_2 : D^2 \ti S^1 \to T (D^2 \ti S^1).$

    Using $P(\Rab(y)) = T \Rab(P(y))$
    and
    $u_1(\Rab(y)) = T r^b_a u_1(y),$
    one can see that
    $v_2(\Rab(y)) = T \Rab v_2(y)$).
    Hence,
    $u_2$ quotients down to a vector field $u : \Uab \to T \Uab$
    and this is the desired vector field.
    Since $u_2$ is uniquely determined, so is $u.$
\end{proof}
\begin{proof}
    [Proof of \cref{prop:hvfmap}]
    Suppose $v : M \to TM$ is a \hvf{}.
    Define a function
    \[    
        u : M \to \UTSig, \quad x \mapsto \normalize{ T \pi(v(x)) }
    \]
    where $\pi : M \to \Sig$ defines the fibering.
    As $v$ is horizontal, $u$ is well defined.
    Any point $x \in M$ lies inside of a standard fibered torus,
    where the definition of $u$ agrees with the map given in
    \cref{lemma:localunittan}.
    Hence, $u$ is continuous and is the desired map.

    For the converse direction, suppose $u : M \to \UTSig$
    is a map as in the statement of the proposition.
    Choose a plane field $P$ transverse to the fibering.
    Such a plane field always exists;
    for instance, one can equip $M$ with a Riemannian metric
    and consider the planes which are orthogonal to the fibers.
    Then define $v : M \to TM$ as the unique vector field
    tangent to $P$
    which satisfies $T \pi(v(x)) = u(x)$ for all $x \in M.$
    By \cref{lemma:lifttohvf},
    such a $v$ exists and is continuous.
    Moreover, $v$ is horizontal by construction.
\end{proof}

Fiber-preserving maps must be of a specific form.

\begin{prop} \label{prop:seicover}
    Suppose $M_1$ and $M_2$ are Seifert fiber spaces over the same
    base orbifold $\Sig$
    and
    $u:M_1 \to M_2$ is a fiber-preserving map such that
    \[
        \simpleCD{M_1}{u}{M_2}{}{}{}{\Sig}{\id}{\Sig}
    \]
    commutes.
    Then either
    \begin{enumerate}
        \item        
        $u$ is homotopic to a composition of the form
        $M_1 \to \Sig \to M_2$, or
        \item
        $u$ is homotopic to a fiberwise covering $M_1 \to M_2$.
    \end{enumerate} \end{prop}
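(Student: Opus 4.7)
The plan is to introduce a globally defined \emph{fiber degree} $d \in \bbZ$ for the map $u$ and then split into two cases based on whether $d = 0$ or $d \ne 0$, constructing the appropriate homotopy in each case.

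First I would argue locally. Cover $\Sig$ by orbifold charts $(\varphi, U, a)$, and observe that the restrictions of $M_1 \to \Sig$ and $M_2 \to \Sig$ over $U$ are isomorphic to standard fibered tori of the form $\Uab$ with possibly different integer $b$ but a common $a$, since the order of each exceptional fiber is determined by the base orbifold. Lifting to the common cover $D^2 \ti S^1$ and using that $u$ commutes with the projections to $\Sig$, one obtains a map of the form $(z, w) \mapsto (z, f(z, w))$ for some continuous $f : D^2 \ti S^1 \to S^1$ satisfying a compatibility relation between the two rotations $R_{a, b_1}$ and $R_{a, b_2}$. For each $z$, the map $f(z, \cdot) : S^1 \to S^1$ has a well-defined topological degree, and this degree is locally constant in $z$. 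Since the complement of the cone points in $\Sig$ is connected, these local degrees patch together to a single integer $d$, which by continuity equals the degree of $u$ on the exceptional fibers as well.

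In the case $d = 0$, the restriction of $u$ to every fiber is null-homotopic. I would construct a fiberwise homotopy of $u$ to a map that is constant on each fiber by lifting $f(z, \cdot)$ to a continuous map $\bbR \to \bbR$ in each chart and using the straight-line homotopy to the constant given by evaluation at a chosen basepoint. The ambiguity in the choice of lift is an integer shift, which is preserved by the rotation symmetries, so the homotopy descends to $M_1$. The limit map is constant on each fiber and therefore factors as a composition $M_1 \to \Sig \to M_2$, which is case (1).

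In the case $d \ne 0$, the restriction of $u$ to each regular fiber is homotopic to a $d$-fold covering. The analogous construction, using a straight-line homotopy on the lift to $\bbR \to \bbR$ interpolating between $f(z, \cdot)$ and the affine map of slope $d$, yields a homotopy of $u$ to a map $u_1$ which is a $d$-fold covering on every fiber. The main obstacle is verifying the coherence of this construction across overlapping charts, particularly that the prescribed symmetry relations with $R_{a, b_1}$ and $R_{a, b_2}$ are preserved along the homotopy; this is where the constraint linking $b_1$ and $b_2$ modulo $a$ (implicit in the fact that both fiberings have exceptional fibers of the same order over the same cone points) must be used. Once this coherence is established, $u_1$ is a fiberwise covering of degree $d$, giving case (2).
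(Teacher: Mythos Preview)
Your overall strategy---extract a global fiber degree $d$ and then split into the cases $d=0$ and $d\ne 0$---is the same as the paper's. The difference, and the place where your argument has a genuine gap, is in how the fiberwise homotopy is built.

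In each chart you lift $f(z,\cdot)$ to $\tilde f(z,\cdot):\bbR\to\bbR$ and take a straight-line homotopy to either the constant $\tilde f(z,t_0)$ (when $d=0$) or an affine map $t\mapsto dt+c$ (when $d\ne 0$). Both endpoints depend on an auxiliary choice: the basepoint $t_0$ is a point in the fiber of $M_1$ over $z$, and the intercept $c$ fixes a particular identification of the target fiber with $S^1$. Making these choices coherently over all of $\Sig$ is exactly the obstruction you flag but do not resolve. In the $d=0$ case, for instance, a global choice of $t_0$ amounts to a continuous section of $M_1\to\Sig$, and such sections need not exist (think of the Hopf fibration, or any fibering with nonzero Euler number).

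The paper sidesteps all patching by making the construction canonical on each fiber. It equips $M_1$ and $M_2$ with metrics so that regular fibers have unit length, lifts $u$ on a single fiber to $\tilde u:\bbR\to\bbR$, and replaces $\tilde u$ by the running average
\[
\hat u(t)=\int_{t-1/2}^{t+1/2}\tilde u(s)\,ds,
\]
which has constant slope $d$. The induced map $h:L_1\to L_2$ is independent of the choice of lift and of how $\bbR$ is identified with either fiber, so the fiber-by-fiber definitions automatically assemble into a global smooth map $h:M_1\to M_2$; shrinking the averaging window to a point gives the homotopy back to $u$. Your argument becomes correct if you replace ``evaluation at a chosen basepoint'' by this average (or some equally choice-free construction); as written, the coherence step is the missing idea.
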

\begin{remark}
    In the first case, the map $M_1 \to \Sig$ is the projection
    defining the fibering and the second map $\Sig \to M_2$
    is a continuous section for the fibering $M_2 \to \Sig$.
    In either case, the homotopy is along the fibers of $M_2$,
    as may be seen from the proof below.
\end{remark}
\begin{proof}
    First assume that $M_1$ and $M_2$ and their fibers are orientable.
    Because $u$ projects down to the identity map
    on $\Sig,$ the degree of $u$ is equal to the degree of the restriction
    of $u$ to any fiber of $M_1.$
    Further, since the fiberings of $M_1$ and $M_2$ are smooth,
    we may perturb the continuous map $u$ slightly to produce a smooth function
    which also projects to the identity on $\Sig.$
    This smooth perturbation is homotopic to $u,$
    and so without loss of generality, we assume $u$ itself is smooth.

    Equip each of $M_1$ and $M_2$ with a metric so that every regular
    fiber has length exactly one.
    The exceptional fibers will then have lengths $1/\al_i$
    for integers $\al_i>1$.
    Consider a point $x \in M_1$ and
    let $L_1$ be the fiber through $x$
    and $L_2$ the fiber through $u(x)$.
    Define length-preserving covering maps
    $\pi_1: \bbR \to L_1$ and
    $\pi_2: \bbR \to L_2$
    and choose a lift $\tilde u: \bbR \to \bbR$
    such that $\pi_2 \circ \tilde u = u \circ \pi_1 $.
    Define $\hat u: \bbR \to \bbR$ by
    \[
        \hat u(t) = \int_{t-\tfrac{1}{2}}^{t+\tfrac{1}{2}} \tilde u(s) \, ds
        \qquad \text{so that} \qquad
        \frac{d \hat u}{d t}
        = \tilde u \left(t+\tfrac{1}{2}\right)
        - \tilde u \left(t-\tfrac{1}{2}\right) = \deg(u).
    \]
    There is then a unique function $h:L_1 \to L_2$ such that
    $h \circ \pi_1 = \pi_2 \circ \hat u$
    and $h$ is independent of the choice of lift $\tilde u$.
    When defined fiber by fiber on all of $M_1$,
    this gives a smooth map $h:M_1 \to M_2$.
    By shrinking the interval $[t-1/2,t+1/2]$ down to a point,
    one can show that $h$ is homotopic to $u$
    and that the homotopy is through fiber-preserving maps.

    If $\deg u = 0$, then $h$ is constant on every fiber of $M_1,$
    and so it factors as a composition $M_1 \to \Sig \to M_2.$
    If $\deg u \ne 0$, then $h$ is a covering map on each fiber.
    Since $h$ projects to the identity on $\Sig,$
    one can see that it is a global covering map $M_1 \to M_2.$

    If any of $M_1, M_2,$ or the fiberings is not orientable,
    the steps above still work.
    However, the degree of $u$ is only defined up to a sign.
    The interval $[t-1/2,t+1/2]$ was used above in order to
    avoid having to choose an orientation on a fiber.
\end{proof}
Because of \cref{prop:hvfmap},
we primarily consider \cref{prop:seicover} in the case where
$M_2$ is a unit tangent bundle.
When is the case $M \to \Sig \to \UTSig$ possible?
This requires a horizontal section $\Sig \to \UTSig$;
that is, a vector field on $\Sig$ which by \cref{prop:orbvfield}
is possible only if $\Sig$ is the 2-torus or Klein bottle.
This situation is discussed in further detail in \cref{sec:para}.
Outside of this case, we have the following equivalences.

\begin{thm} \label{thm:bigequiv}
    Let $M \to \Sig$ be a Seifert fibering over
    a base orbifold $\Sig$ which is neither the 2-torus
    nor the Klein bottle.
    Then the following are equivalent:
    \begin{enumerate}
        \item $M$ has a \hvf{};
        \item
        there is a continuous map $u: M \to \UTSig$
        such that
        \[
            \simpleCD{M}{u}{\UTSig}{}{}{}{\Sig}{\id}{\Sig}
        \]
        commutes;
        \item
        there is a fiberwise covering from $M$ to
        the unit tangent bundle $\UTSig;$
        \item
        $M$ is orientable, the fibering has Seifert invariant
        \[
            (g; (\al_1, \bt_1), \ldots, (\al_k, \bt_k))
        \]
        and there is a non-zero integer $d$
        such that
        \begin{math}
            d \cdot e(M \to \Sig) = \chi(\Sig)
        \end{math}
        and
        \[        
            d \bt_i/\al_i \equiv -1/\al_i    \mod \bbZ
        \]
        for all $i = 1, \ldots, k.$
    \end{enumerate} \end{thm}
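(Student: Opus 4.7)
The plan is to prove the cycle $(1) \Leftrightarrow (2) \Leftrightarrow (3) \Leftrightarrow (4)$, leveraging the machinery developed earlier in the paper. The equivalence $(1) \Leftrightarrow (2)$ is exactly \cref{prop:hvfmap} and needs no further argument. For $(2) \Rightarrow (3)$, I would apply \cref{prop:seicover} to the fiber-preserving map $u : M \to \UTSig$ from condition (2): the proposition presents two alternatives, the first forcing $u$ to be homotopic to a composition $M \to \Sig \to \UTSig.$ The section $\Sig \to \UTSig$ in that case would be a nowhere-zero vector field on $\Sig,$ and \cref{prop:orbvfield} then forces $\Sig$ to be the 2-torus or Klein bottle, contrary to hypothesis. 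So the second alternative holds, and $u$ is homotopic to a fiberwise covering. The reverse direction $(3) \Rightarrow (2)$ is immediate, since a fiberwise covering is already a continuous fiber-preserving map fitting the required diagram.

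The heart of the theorem is $(3) \Leftrightarrow (4)$, which I would treat as a Seifert-invariant bookkeeping computation. Writing $M$'s invariant as $(g; (\al_1, \bt_1), \ldots, (\al_k, \bt_k))$ and recalling from \cref{prop:UTinvt} that $\UTSig$ has invariant $(g; (1, n - \chi(\Sig_0)), (\al_1, -1), \ldots, (\al_n, -1)),$ the exceptional $\al_i$'s on both sides are the orders of the cone points of $\Sig$ and therefore correspond after reordering and inserting $(1,0)$ pairs as needed. Given a fiberwise covering of degree $d$ in (3), the ``moreover'' clause of \cref{prop:coverd} shows that $\UTSig$ also admits the Seifert invariant $(g; (\al_1, d\bt_1), \ldots, (\al_k, d\bt_k)),$ and orientability of $M$ follows because $\UTSig$ is always orientable. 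Two Seifert invariants describe the same fibering iff the ratios $\bt_i/\al_i$ agree modulo $\bbZ$ and the total sums are equal, so matching exceptional fibers yields $d\bt_i \equiv -1 \bmod \al_i,$ while equating total sums, combined with \cref{cor:eulerd} and \cref{cor:evchi}, delivers the Euler-number condition $d \cdot e(M \to \Sig) = \chi(\Sig).$

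For $(4) \Rightarrow (3)$ I would run this reasoning in reverse. The congruence $d \bt_i \equiv -1 \bmod \al_i$ together with $\gcd(\bt_i, \al_i) = 1$ forces $\gcd(d, \al_i) = 1$ for all $i,$ so \cref{prop:coverd} produces a fiberwise covering $M \to N$ of degree $d$ with $N$ having invariant $(g; (\al_1, d\bt_1), \ldots, (\al_k, d\bt_k)).$ The same congruences let me adjust each $d\bt_i$ to $-1$ modulo $\al_i,$ and the Euler-number hypothesis combined with \cref{cor:evchi} pins down the total sum to match that of $\UTSig,$ so $N$ has the same Seifert invariant as $\UTSig$ up to the allowed equivalences and is therefore isomorphic to it. Composing yields the fiberwise covering $M \to \UTSig$ demanded by (3).

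The main obstacle, and the step where I expect to spend the most care, is the invariant comparison in $(3) \Leftrightarrow (4)$: the ``extra'' pair $(1, n - \chi(\Sig_0))$ in $\UTSig$'s standard invariant has to absorb the collective integer shifts made to the $\bt_i / \al_i,$ and one must verify that condition (4) supplies exactly the right package of congruences plus the Euler-number identity to guarantee that these shifts can be carried out in a way consistent with the invariance of $\sum \bt_i / \al_i$ under the allowed transformations.
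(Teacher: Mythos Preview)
Your proposal is correct and follows essentially the same route as the paper: the paper proves $(1)\Leftrightarrow(2)$ via \cref{prop:hvfmap}, $(2)\Leftarrow(3)$ trivially, $(2)\Rightarrow(3)$ by combining \cref{prop:seicover} with \cref{prop:orbvfield} to rule out the degree-zero alternative, and $(3)\Leftrightarrow(4)$ as ``a combination of propositions \ref{prop:UTinvt} and \ref{prop:coverd}.'' Your write-up simply unpacks the last equivalence in more detail than the paper does, including the observation that $d\bt_i\equiv-1\bmod\al_i$ forces $\gcd(d,\al_i)=1$ and the use of \cref{cor:eulerd} and \cref{cor:evchi} for the Euler-number match; the paper leaves all of this implicit.
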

\begin{proof}
    The equivalence
    $(1)$ $\Leftrightarrow$ $(2)$
    is given by \cref{prop:hvfmap}
    and
    $(3)$ $\Leftrightarrow$ $(4)$
    is a combination of
    propositions \ref{prop:UTinvt} and \ref{prop:coverd}.
    Note that as $\UTSig$ is oriented,
    any manifold $M$ covering $\UTSig$ is orientable.
    The direction $(2)$ $\Leftarrow$ $(3)$
    follows immediately from the definition of a fiberwise covering.
    The only place we use that $\Sig$ is not the 2-torus or Klein bottle
    is to invoke \cref{prop:orbvfield} to ensure we are in the second case
    of \cref{prop:seicover}.
    This gives $(2)$ $\Rightarrow$ $(3).$
\end{proof}
The integer $d$ in item $(4)$ above has topological significance
in the other three items as well.
In $(2)$ and $(3), d$ is the degree of the map $u : M \to \UTSig.$
In $(1), d$ is the number of turns that the vector field makes
while going around a regular fiber.

In the next four sections,
we consider the consequences of \cref{thm:bigequiv}
for the cases of bad, elliptic, parabolic, and hyperbolic
base orbifolds respectively.

\section{Lens spaces} \label{sec:lens} 

We now consider Seifert fiberings on lens spaces.
To aid in the study, we introduce in this paper
the notion of a ``marked'' lens space.

For a solid torus, $D^2 \ti S^1,$
the \emph{core} of a solid torus
is the curve $\{0\} \ti S^1$
where $0$ is the center of the disk $D^2.$
A \emph{lens space}
is a closed oriented 3-manifold $L$ constructed
by gluing together together two solid tori
$U_1$ and $U_2$
by a linear map $A : \bbT^2 \to \bbT^2$
which identifies the two boundaries.
A \emph{marked lens space}
is a lens space along with a choice of orientation
for each of the cores of the two solid tori.
A \emph{fibered marked lens space}
is a marked lens space equipped with a Seifert fibering
such that each of $U_1$ and $U_2$ are standard fibered tori.

\medskip{}

Later in this section,
we show that there are two families of Seifert fiberings on lens spaces.
One of these is the fibered marked lens spaces.
The other is produced by taking a single standard fibered torus and,
by identifying points on the boundary,
closing it up into a closed 3-manifold.
We first fully treat the fibered marked lens spaces before handling
this second case.

\subsection{Fibered marked lens spaces}

Now consider a marked lens space $L.$
For the boundaries of the two solid tori,
the meridians $M_i$ are well defined
(at least up to homology)
as they bound solid disks.
The longitudes, however,
are not well defined.
If $L_i$ is a longitude on the boundary of $U_i,$
then for any integer $k_i,$
there is a diffeomorphism from $U_i$ to itself
which maps $L_i$ to a curve homologous
to $L_i + k_i M_i.$
As we are assuming the core of $U_i$ has a
specified orientation,
we only consider longitudes with a compatible orientation.

Viewing the gluing map $A$ as two-by-two matrix,
we have
\[
    \begin{pmatrix}
        M_1 \\
        L_1 \end{pmatrix}
    =
    \ A \
    \begin{pmatrix}
        M_2 \\
        L_2
    \end{pmatrix}  \]
in homology.
In order for the lens space $L$ to be an oriented 3-manifold
the gluing map must reverse the orientation of the boundary,
and so this matrix has a determinant equal to $-1$.
In this section, we write this gluing matrix as
\[
    A
    =
    \begin{pmatrix}
        -q_2 & p \\
        * & q_1
    \end{pmatrix}  \]
with a $*$ for the lower-left entry.
We omit this entry as it is uniquely determined
by $\det(A) = -1$ and the other three entries,
and whereas the other entries have topological
significance, this lower-left entry
is usually given by a relatively complicated formula
with no particular significance.

The matrix depends on the choice of longitudes $L_i + k_i M_i$
and a different choice of longitudes would yield a matrix
\[
    \begin{pmatrix}
        1 & 0 \\
        k_1 & 1 \end{pmatrix}
    \begin{pmatrix}
        -q_2 & p \\
        * & q_1 \end{pmatrix}
    \begin{pmatrix}
        1 & 0 \\
        k_2 & 1 \end{pmatrix}
    \inv
    =
    \begin{pmatrix}
        -q_2-p k_2 & p \\
        * & q_1 + p k_1 \end{pmatrix}
    .
\]
This shows that the integer $p$
is well defined
and that $q_1$ and $q_2$ are well defined modulo $p.$
If we relabelled $U_1$ and $U_2$
as $U_2$ and $U_1$ respectively,
then the gluing map would be replaced by its inverse,
and the matrix would then be
\[
    \begin{pmatrix}
        -q_2 & p \\
        * & q_1 \end{pmatrix}
    \inv
    =
    \begin{pmatrix}
        -q_1 & p \\
        * & q_2 \end{pmatrix}
    .
\]
Note also that the condition $\det(A) = -1$
implies that $q_1 q_2 \equiv -1 \bmod p$.

For coprime integers $p$ and $q,$ define $L(p,q)$
as the marked lens space
constructed by gluing together two solid tori
by a gluing map with a matrix of the form
\[
    \begin{pmatrix}
        -q_2 & p \\
        * & q_1
    \end{pmatrix}  \]
where $q_1 = q$ and the first column is chosen so the determinant is $-1$.
As the choice of the first column
of the matrix corresponds only to a choice
of longitude on $U_1,$ the marked lens space $L(p,q)$
is well defined.
Further, for any integer $k,$
$L(p, q)$ and  $L(p, q + p k)$ are the same marked lens space.
If 
$q_1 q_2 \equiv -1 \bmod p$.
then
$L(p, q_1)$ and $L(p, q_2)$
are the same marked lens spaces,
differing only by the choice of labelling
of the two solid tori as $U_1$ and $U_2.$

Now suppose we take a marked lens space
and produce a new marked lens space by
reversing the orientation of the core of exactly
one of the two solid tori,
say $U_2.$
This reverses the orientation of the longitude $L_2$ and,
in order to keep the original orientation of the 3-manifold
unchanged,
we must reverse the orientation of the meridian $M_2$ as well.
With respect to these new orientations,
the gluing matrix is now given by
\[
    \begin{pmatrix}
        -q_2 & p \\
        * & q_1 \end{pmatrix}
    \begin{pmatrix}
        -1 & 0 \\
        0 & -1 \end{pmatrix}
    =
    \begin{pmatrix}
        q_2 & -p \\
        * & -q_1 \end{pmatrix}
    .
\]
This leads to the following key observation:
\begin{quote}
    $L(p,q)$ and $L(-p,-q)$
    are diffeomorphic as oriented 3-manifolds,
    but they are not the same \emph{marked} lens space.
\end{quote}

We can reverse the orientation of a lens space,
say by leaving the orientations of both cores unchanged,
but reversing the orientations of both meridians.
The new gluing matrix is
\[
    \begin{pmatrix}
        1 & 0 \\
        0 & -1 \end{pmatrix}
    \begin{pmatrix}
        -q_2 & p \\
        * & q_1 \end{pmatrix}
    \begin{pmatrix}
        1 & 0 \\
        0 & -1 \end{pmatrix}
    =
    \begin{pmatrix}
        -q_2 & -p \\
        * & q_1
    \end{pmatrix}  \]
from which one sees that $-L(p,q) = L(-p,q).$

\medskip{}

Lens spaces give examples of manifolds which are homotopy equivalent,
but not homeomorphic \cite{bro1960topological}.
In fact, it is a highly non-trivial result
that the only homeomorphisms between lens spaces
are the ``obvious'' ones explained above.

\begin{thm}
    [Brody \cite{bro1960topological}] \label{thm:lenshomeo}
    Lens spaces $L(p_1, q_1)$ and $L(p_2, q_2)$
    are homeomorphic
    if and only if $|p_1| = |p_2|$
    and $q_1 \equiv \pm q_2^{\pm 1} \bmod p.$
\end{thm}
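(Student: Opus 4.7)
The plan is to split the theorem into its two directions. The ``if'' direction follows directly from the algebraic manipulations of the gluing matrix carried out in the paragraphs preceding the theorem. The basic operations identified there --- altering a longitude $L_i \mapsto L_i + k M_i$, relabelling the two solid tori as $U_2, U_1$ (which replaces $q$ by $-q\inv \bmod p$ via $q_1 q_2 \equiv -1 \bmod p$), reversing the orientation of the core of a single solid torus (which sends $(p,q)$ to $(-p,-q)$), and reversing the ambient orientation (which sends $(p,q)$ to $(-p,q)$) --- together generate a finite group acting on the pair $(p,q)$. A direct check shows the orbit of $(p,q)$ under this group, modulo the marking data, consists of exactly those $(p', q')$ with $|p'| = |p|$ and $q' \equiv \pm q^{\pm 1} \bmod p$, establishing the ``if'' direction.

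For the ``only if'' direction, first note that $H_1(L(p,q); \bbZ) \cong \bbZ / |p| \bbZ$, so any homeomorphism forces $|p_1| = |p_2|$; write this common value as $p \ge 1$. The remaining task is to show that $L(p, q_1) \cong L(p, q_2)$ implies $q_1 \equiv \pm q_2^{\pm 1} \bmod p$. The approach is to invoke \emph{Reidemeister torsion}. For each primitive $p$-th root of unity $\xi$, one defines, from the cellular chain complex of the universal cover $\bbS^3 \to L(p,q)$ twisted by $\xi$, a torsion invariant taking values in $\bbC^{\times} / \{\pm \xi^k\}$; a standard computation, using the handle decomposition coming from the solid torus gluing, yields
\[
    \tau_\xi(L(p, q)) = (\xi - 1)(\xi^{r} - 1) \mod \{\pm \xi^k\},
\]
where $r$ satisfies $q r \equiv 1 \bmod p$. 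By Moise's theorem that every topological 3-manifold admits an essentially unique PL structure, this \emph{a priori} combinatorial invariant is a topological invariant, so $\tau_\xi(L(p, q_1)) = \tau_\xi(L(p, q_2))$ for every primitive $p$-th root of unity $\xi$.

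The main obstacle is extracting the arithmetic conclusion $q_1 \equiv \pm q_2^{\pm 1} \bmod p$ from this family of torsion equalities. The tool here is \emph{Franz's independence lemma}, which asserts that a product $\prod_i (\xi^{a_i} - 1)^{n_i}$ that equals $\pm \xi^k$ for every primitive $p$-th root of unity $\xi$ must arise from a trivial combinatorial identity among the exponents $a_i$. Applied to the equation
\[
    (\xi - 1)(\xi^{r_1} - 1) \equiv \pm \xi^k \, (\xi - 1)(\xi^{r_2} - 1) \pmod{\{\pm \xi^k\}}
\]
holding for all primitive $p$-th roots $\xi$, Franz's lemma forces $r_1 \equiv \pm r_2^{\pm 1} \bmod p$, which upon taking inverses modulo $p$ is equivalent to $q_1 \equiv \pm q_2^{\pm 1} \bmod p$. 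Proving Franz's lemma, verifying the torsion computation, and invoking Moise's triangulation theorem are the delicate technical pieces; in practice one would simply cite the combined results of Reidemeister, Franz, Moise and Brody rather than rederive them.
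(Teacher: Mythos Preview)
The paper does not prove this theorem. It is stated with attribution to Brody and the remark that it is ``a highly non-trivial result,'' but no proof is given; the theorem is used as a black box in the proof of \cref{prop:marklens} and elsewhere. So there is no ``paper's own proof'' to compare against.

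That said, your outline is a faithful sketch of the classical argument. The ``if'' direction is indeed implicit in the matrix manipulations the paper carries out just before stating the theorem, exactly as you say. For the ``only if'' direction, the Reidemeister--Franz torsion approach you describe is the standard one: Reidemeister and Franz established the combinatorial classification via torsion and Franz's independence lemma, and Brody's contribution was to upgrade this to the topological category (your invocation of Moise's theorem is one route; Brody's original paper actually gave a somewhat different argument showing the torsion is a topological invariant). One small point worth tightening: when comparing torsions you must also allow the substitution $\xi \mapsto \xi^s$ for $s$ coprime to $p$, since the torsion depends on a choice of identification $H_1(L) \cong \bbZ/p$; this is what ultimately produces the inverse $q_2^{-1}$ in the conclusion, not just the $\pm$ sign. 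Your final sentence acknowledging that one would in practice cite these results rather than rederive them is exactly the attitude the paper itself takes.
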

Now consider a Seifert fiber space of the
form $M(0; (\al_1, \bt_1), (\al_2, \bt_2)).$
This is constructed by gluing two standard fibered tori
into a manifold with boundary $M_0 = F_0 \ti S^1$
where $F_0$ is a sphere with two disks removed and so is an annulus.
As $M_0$ is an $I$-bundle,
this is effectively the same as gluing the boundaries of the
two solid tori together by a linear map that respects the fiberings,
and we may think of the Seifert fiber space as a fibered marked lens space.

\begin{thm} \label{thm:lenspq}
    The Seifert invariant $(0; (\al_1, \bt_1), (\al_2, \bt_2))$ yields 
    a fibered marked lens space with gluing matrix
    \[
        \begin{pmatrix}
            -\al_1 \bt'_2 - \al'_2 \bt_1 & \al_1 \bt_2 + \al_2 \bt_1 \\
            * & \al'_1 \bt_2 + \al_2 \bt'_1 \end{pmatrix}
        \quad \text{where} \quad
        \begin{pmatrix}
            \al_i & \bt_i \\
            \al'_i & \bt'_i \end{pmatrix}
        \in
        SL(2, \bbZ).
    \]

    In particular, the marked lens space is $L(p,q)$
    with
    \[
        p = \al_1 \bt_2 + \al_2 \bt_1 \qandq
        q = \al'_1 \bt_2 + \al_2 \bt'_1.
    \] \end{thm}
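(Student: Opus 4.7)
The plan is to exploit the fact that the base $F_0$, a sphere minus two disks, is an annulus, so that $M_0 = F_0 \times S^1$ is diffeomorphic to $T^2 \times I$. Gluing the two solid tori $T_1$ and $T_2$ to the two boundary components of $M_0$ therefore amounts to gluing them directly to each other along a single torus, and the resulting manifold is a lens space. The cores of $T_1$ and $T_2$ are the two exceptional fibers, and the Seifert fibering orients each of them, giving the structure of a fibered marked lens space. The remaining task is purely a matter of composing the gluing data.

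The key step is to track the $\al_i, \bt_i$-bases across $M_0$. Inside $M_0$, the two boundary tori $S^1_1 \times S^1$ and $S^1_2 \times S^1$ are joined through the annulus $R$, and the canonical vertical fibers satisfy $H_1 = H_2$ in $H_1(M_0)$. The horizontal curves $C_i$ are oriented as the boundaries of the excised disks, so the oriented boundary $\partial R$ is $-C_1 - C_2$; since $R$ is a surface with boundary in $M_0$ this gives the relation $C_1 = -C_2$. Combining with the Seifert gluing matrices, I express the bases on the two boundary tori in terms of each other: denoting $A_i = \begin{pmatrix} \al_i & \bt_i \\ \al'_i & \bt'_i \end{pmatrix} \in SL(2,\bbZ)$, the relation $\begin{pmatrix} M_i \\ L_i \end{pmatrix} = A_i \begin{pmatrix} C_i \\ H_i \end{pmatrix}$ together with $(C_1, H_1) = (-C_2, H_2)$ yields
\[
    \begin{pmatrix} M_1 \\ L_1 \end{pmatrix}
    = A_1 \begin{pmatrix} -1 & 0 \\ 0 & 1 \end{pmatrix} A_2\inv \begin{pmatrix} M_2 \\ L_2 \end{pmatrix}.
\]

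From here it is a direct computation. Using $A_2\inv = \begin{pmatrix} \bt'_2 & -\bt_2 \\ -\al'_2 & \al_2 \end{pmatrix}$, multiplying out the right-hand side produces exactly the matrix
\[
    \begin{pmatrix} -\al_1 \bt'_2 - \al'_2 \bt_1 & \al_1 \bt_2 + \al_2 \bt_1 \\ * & \al'_1 \bt_2 + \al_2 \bt'_1 \end{pmatrix},
\]
whose determinant is $(+1)(-1)(+1) = -1$ as required for an orientation-preserving gluing of the boundary torus. Comparing with the normal form of the gluing matrix for a marked lens space gives $p = \al_1 \bt_2 + \al_2 \bt_1$ and $q = \al'_1 \bt_2 + \al_2 \bt'_1$, and the marking is induced by the orientations of the exceptional fibers.

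The main obstacle is the orientation bookkeeping, not the algebra: I need to justify carefully that $C_1 = -C_2$ in $H_1(M_0)$ (using the convention, stated earlier, that each $C_i$ is oriented as the boundary of the \emph{excised} disk rather than as the boundary of $R$), and to confirm that the resulting gluing map reverses the orientation of the boundary torus so that the lens space inherits the correct orientation. Once these sign conventions are settled, both the $SL(2,\bbZ)$ identity and the identification with $L(p,q)$ follow immediately.
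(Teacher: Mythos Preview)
Your proposal is correct and follows essentially the same approach as the paper: both use $C_1 \sim -C_2$ and $H_1 \sim H_2$ in $M_0$ to obtain the gluing matrix as $A_1 \begin{pmatrix} -1 & 0 \\ 0 & 1 \end{pmatrix} A_2^{-1}$ and then multiply out. Your emphasis on the orientation bookkeeping (why $C_1 = -C_2$ and why the cores inherit orientations from the Seifert data) mirrors the paper's remarks, so there is nothing substantively different.
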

%

\begin{proof}
    Recall in the construction of a Seifert fiber space by gluing
    that
    \[
        \begin{pmatrix}
            M_i \\
            L_i \end{pmatrix}
        =
        \begin{pmatrix}
            \al_i & \bt_i \\
            \al'_i & \bt'_i \end{pmatrix}
        \begin{pmatrix}
            C_i \\
            H_i
        \end{pmatrix}  \]
    where $H_i$ is a vertical fiber,
    $C_i$ is a horizontal circle,
    and $\al'_i$ and $\bt'_i$ are chosen to yield so that
    the matrix lies in $SL(2, \bbZ).$

    Note that we are not free to replace $L_i$ with $-L_i$ here
    without changing the gluing.
    Hence, the pair $(\al_i, \bt_i)$ in the Seifert invariant
    determines an orientation of the longitude $L_i$
    and so also determines an orientation of the core of the solid torus.
    This means we are in fact constructing a marked lens space.

    As the two circles $C_1$ and $C_2$ are the boundaries of two disks
    punched out of a sphere, $C_1 \sim -C_2$ in homology.
    As regular vertical fibers, $H_1 \sim H_2$ in homology.
    Hence,
    \begin{align*}
        \begin{pmatrix}
            C_1 \\ H_1 \end{pmatrix}
        =&
        \begin{pmatrix}
            -1 & 0 \\
            0 & 1 \end{pmatrix}
        \begin{pmatrix}
            C_2 \\ H_2 \end{pmatrix}
        \quad \Rightarrow \quad \\
        \begin{pmatrix}
            \al_1 & \bt_1 \\
            \al'_1 & \bt'_1 \end{pmatrix}
        \inv    
        \begin{pmatrix}
            M_1 \\ L_1 \end{pmatrix}
        =&
        \begin{pmatrix}
            -1 & 0 \\
            0 & 1 \end{pmatrix}
        \begin{pmatrix}
            \al_2 & \bt_2 \\
            \al'_2 & \bt'_2 \end{pmatrix}
        \inv
        \begin{pmatrix}
            M_2 \\ L_2 \end{pmatrix}
        \quad \Rightarrow \quad \\
        \begin{pmatrix}
            M_1 \\ L_1 \end{pmatrix}
        =&
        \begin{pmatrix}
            \al_1 & \bt_1 \\
            \al'_1 & \bt'_1 \end{pmatrix}
        \begin{pmatrix}
            -1 & 0 \\
            0 & 1 \end{pmatrix}
        \begin{pmatrix}
            -\bt'_2 & \bt_2 \\
            -\al'_2 & \al_2 \end{pmatrix}
        \begin{pmatrix}
            M_2 \\ L_2 \end{pmatrix}
        \quad \Rightarrow \quad \\
        \begin{pmatrix}
            M_1 \\ L_1 \end{pmatrix}
        =&
        \begin{pmatrix}
            -\al_1 \bt'_2 - \al'_2 \bt_1 & \al_1 \bt_2 + \al_2 \bt_1 \\
            * & \al'_1 \bt_2 + \al_2 \bt'_1 \end{pmatrix}
        \begin{pmatrix}
            M_2 \\ L_2 \end{pmatrix}
        .
        \qedhere
    \end{align*} \end{proof}

Using this, we may calculate which lens spaces arise as unit tangent bundles.

\begin{cor} \label{cor:unitlens}
    If $\Sig$ is a sphere with $0, 1,$ or $2$ cone points added,
    then $\UTSig$ is a marked lens space of the form $L(p, -1)$
    with $p \le -2.$
\end{cor}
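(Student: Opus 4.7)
The plan is to combine Proposition \ref{prop:UTinvt} with Theorem \ref{thm:lenspq}: first obtain the Seifert invariant of $\UTSig$, then massage it into the two-pair form $(0; (\al_1, \bt_1), (\al_2, \bt_2))$ using the allowed moves on Seifert invariants, and finally read off $p$ and $q$ from the gluing matrix given by Theorem \ref{thm:lenspq}. Throughout I will keep careful track of orientations so that the output is a marked lens space rather than merely a lens space up to diffeomorphism.

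I would split into three cases based on the number $n \in \{0, 1, 2\}$ of cone points on $\Sig_0 = S^2$. Since $\chi(\Sig_0) = 2$, Proposition \ref{prop:UTinvt} yields invariants $(0; (1, -2))$ for $n = 0$, $(0; (1, -1), (a, -1))$ for $n = 1$ (with cone order $a \ge 2$), and $(0; (1, 0), (a, -1), (b, -1))$ for $n = 2$ (with cone orders $a, b \ge 2$). For $n = 0$ I would insert a trivial $(1, 0)$ pair and shift the $\bt_i/\al_i$ while preserving their sum to reach $(0; (1, -1), (1, -1))$; for $n = 2$ I would simply drop the trivial $(1, 0)$ pair to reach $(0; (a, -1), (b, -1))$. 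All three cases then sit in the two-pair form required by Theorem \ref{thm:lenspq}.

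Applying the theorem with convenient $SL(2, \bbZ)$ completions of the pairs $(1, -1)$ and $(a, -1)$ will give $p = -2$ for $n = 0$, $p = -(a+1)$ for $n = 1$, and $p = -(a+b)$ for $n = 2$, each of which is manifestly $\le -2$ once $a, b \ge 2$. The case $n = 2$ will also produce $q = -1$ on the nose. The main obstacle will be the remaining two cases, where the natural choice of completions instead yields $q = 1$ for $n = 0$ and $q = a$ for $n = 1$, neither of which equals $-1$. I would resolve both using the marked lens space equivalence $L(p, q) = L(p, q + pk)$ derived earlier in this section: the single shift $q \mapsto q + p$ takes $1$ to $-1$ when $p = -2$, and takes $a$ to $-1$ when $p = -(a+1)$. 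Reading off the resulting marked lens spaces in all three cases gives $\UTSig = L(p, -1)$ with $p \le -2$, as claimed.
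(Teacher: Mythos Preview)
Your approach is correct and follows the same outline as the paper: apply \cref{prop:UTinvt}, reduce to the two-pair form, and read off $p$ and $q$ via \cref{thm:lenspq}. The paper streamlines the argument in two ways worth noting. First, it avoids the three-way case split by writing the invariant directly as $M(0;(\al_1,-1),(\al_2,-1))$ with $\al_i \ge 1$ (allowing $\al_i = 1$ to absorb the $n=0$ and $n=1$ cases), so there is nothing to massage. Second, and this is what eliminates your ``main obstacle,'' the paper chooses the completion $\al'_i = 1$, $\bt'_i = 0$ for each pair $(\al_i,-1)$; plugging into the formula $q = \al'_1\bt_2 + \al_2\bt'_1$ gives $q = -1$ on the nose in every case, so no $q \mapsto q + p$ shift is needed. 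Your detour through $q = 1$ and $q = a$ simply reflects a less convenient choice of $SL(2,\bbZ)$ completion, not a genuine difficulty.
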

\begin{proof}
    By \cref{prop:UTinvt}, we may write the fibering as
    \begin{math}
        M(0; (\al_1, -1), (\al_2, -1))
    \end{math}
    where $\al_1 = \al_2 = 1$ if $\Sig = S^2$ is the two sphere
    and $\al_1 = 1$ if $\Sig$ has a single cone point.
    One can then apply \cref{thm:lenspq} with $\al'_i = 1$ and $\bt'_i = 0$
    to get the result.
    Alternatively, it is fairly easy
    to calculate the gluing matrix directly.
    Adapting the proof of \cref{thm:lenspq}, the matrix is given by
    \[
        \begin{pmatrix}
            \al_2 & -1 \\
            1 & 0 \end{pmatrix}
        \begin{pmatrix}
            -1 & 0 \\
            0 & 1 \end{pmatrix}
        \begin{pmatrix}
            \al_1 & -1 \\
            1 & 0 \end{pmatrix}
        \inv
        =
        \begin{pmatrix}
            1 & -\al_1-\al_2 \\
            * & -1 \end{pmatrix}
        .
    \]
    Since $\al_1, \al_2 \ge 1,$ it follows that $p = -\al_1-\al_2 \le -2.$
\end{proof}
\begin{prop} \label{prop:lenscover}
    Let $\hat L$ and $L$ be fibered marked lens spaces
    and let $\pi : \hat L \to L$ be a fiberwise
    covering of degree $d.$
    Then there are integers $p$ and $q$ such that
    $\hat L = L(p, q)$
    and
    $L = L(d p, q).$
\end{prop}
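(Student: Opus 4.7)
The plan is to combine Proposition~\ref{prop:coverd} with Theorem~\ref{thm:lenspq}, playing the ambiguity in each off against the fact that a marked lens space $L(p,q)$ is well-defined only up to changing $q$ by a multiple of $p$. Since $\pi : \hat L \to L$ projects to the identity on the common base orbifold $\Sig$, and $\Sig$ is a sphere with at most two cone points of orders $\al_1, \al_2 \ge 1$, I would write the Seifert invariant of $\hat L$ as $(0; (\al_1, \bt_1), (\al_2, \bt_2))$. Proposition~\ref{prop:coverd} then gives the invariant of $L$ as $(0; (\al_1, d\bt_1), (\al_2, d\bt_2))$, with $\gcd(\al_i, d) = 1$ for each $i$ (smoothness of the covering forces this coprimality).

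Next I would choose integers $\al'_i, \bt'_i$ with $\al_i\bt'_i - \al'_i\bt_i = 1$ and apply Theorem~\ref{thm:lenspq} to $\hat L$, obtaining $\hat L = L(p, q)$ with $p = \al_1\bt_2 + \al_2\bt_1$ and $q = \al'_1\bt_2 + \al_2\bt'_1$. For any $SL(2,\bbZ)$ completion of the rows $(\al_i, d\bt_i)$, the formula for the first lens-space parameter immediately yields
\[
p_L = \al_1(d\bt_2) + \al_2(d\bt_1) = dp,
\]
so $L$'s first parameter is forced to be $dp$ regardless of the choices.

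To arrange the \emph{same} integer $q$ as the second parameter for $L$, I would exploit the freedom to replace $(\al'_i, \bt'_i)$ by $(\al'_i + k_i\al_i, \bt'_i + k_i\bt_i)$: this keeps the matrix in $SL(2,\bbZ)$ and shifts $q$ by a multiple of $p$, so the marked lens space $L(p,q)$ representing $\hat L$ is unchanged. Using $\gcd(\al_i, d) = 1$, choose each $k_i$ so that $d$ divides $\al'_i + k_i\al_i$. Then $\begin{pmatrix} \al_i & d\bt_i \\ \al'_i/d & \bt'_i \end{pmatrix}$ lies in $SL(2,\bbZ)$---its determinant is still $\al_i\bt'_i - \al'_i\bt_i = 1$---and Theorem~\ref{thm:lenspq} applied with these completions yields
\[
q_L = (\al'_1/d)(d\bt_2) + \al_2\bt'_1 = \al'_1\bt_2 + \al_2\bt'_1 = q,
\]
so $L = L(dp, q)$ with the same $q$ as for $\hat L$. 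The main obstacle is precisely this bookkeeping step: reconciling the Seifert-invariant freedom, the $SL(2,\bbZ)$-completion ambiguity, and the mod-$p$ indeterminacy of the marked lens space so that a single pair $(p,q)$ parametrizes both manifolds simultaneously.
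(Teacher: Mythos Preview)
Your argument is correct, but it takes a different route from the paper. The paper works directly with the gluing torus: it records that a fiberwise covering sends $M_i \mapsto m_i$ and $L_i + k_i M_i \mapsto d\,\ell_i$ in homology (as in the proofs of Propositions~\ref{prop:poscoverd} and~\ref{prop:orientd}), and then conjugates the gluing matrix of $\hat L$ by the appropriate triangular matrices to read off the gluing matrix of $L$ as
\[
\begin{pmatrix} -q_2 - k_2 p & d p \\ * & q_1 + k_1 p \end{pmatrix},
\]
so $q = q_1 + k_1 p$ works. Your approach instead factors through the Seifert invariants, invoking Proposition~\ref{prop:coverd} and Theorem~\ref{thm:lenspq} as black boxes and then using the $SL(2,\bbZ)$-completion freedom (together with $\gcd(\al_i,d)=1$) to force the same $q$ on both sides. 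The paper's method is more self-contained and makes the dependence on the longitude ambiguity $k_i$ explicit; yours is more economical in that it reuses prior results, at the cost of the bookkeeping step you flag. Note, incidentally, that only the $k_1$ adjustment affects $q$ --- the formula $q = \al'_1\bt_2 + \al_2\bt'_1$ is insensitive to the second completion --- so the $k_2$ choice is needed only to ensure Theorem~\ref{thm:lenspq} applies to $L$ at all.
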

\begin{proof}
    We use the same notation as in the proofs of
    propositions \ref{prop:poscoverd} and \ref{prop:orientd}.
    It follows from those proofs that
    $\pi(M_i) = m_i$
    and $\pi(L_i + k_i M_i) = d \ell_i$ for some integer $k_i.$
    Then,
    \begin{align*}
        \pi
        \begin{pmatrix}
            M_1 \\ L_1 \end{pmatrix}
        =&
        \begin{pmatrix}
            -q_2 & p \\
            * & q_1 \end{pmatrix}
        \ \pi
        \begin{pmatrix}
            M_2 \\ L_2 \end{pmatrix}
        \quad \Rightarrow \quad \\
        \begin{pmatrix}
            1 & 0 \\
            -k_1 & d \end{pmatrix}
        \begin{pmatrix}
            m_1 \\ \ell_1 \end{pmatrix}
        =&
        \begin{pmatrix}
            -q_2 & p \\
            * & q_1 \end{pmatrix}
        \begin{pmatrix}
            1 & 0 \\
            -k_2 & d \end{pmatrix}
        \begin{pmatrix}
            m_2 \\ \ell_2 \end{pmatrix}
        \quad \Rightarrow \quad \\
        \begin{pmatrix}
            m_1 \\ \ell_1 \end{pmatrix}
        =&
        \frac{1}{d}
        \begin{pmatrix}
            d & 0 \\
            k_1 & 1 \end{pmatrix}
        \begin{pmatrix}
            -q_2 & p \\
            * & q_1 \end{pmatrix}
        \begin{pmatrix}
            1 & 0 \\
            -k_2 & d \end{pmatrix}
        \begin{pmatrix}
            m_2 \\ \ell_2 \end{pmatrix}
        \quad \Rightarrow \quad \\
        \begin{pmatrix}
            m_1 \\ \ell_1 \end{pmatrix}
        =&
        \begin{pmatrix}
            -q_2 - k_2 p & d p \\
            * & q_1 + k_1 p \end{pmatrix}
        \begin{pmatrix}
            m_2 \\ \ell_2
        \end{pmatrix} \end{align*}
    Hence, the claim holds with $q = q_1 + k_1 p.$
\end{proof}
Note that $q \bmod d p$ uniquely determines $q \bmod p,$
and so the covered lens space uniquely determines the
covering lens space.

\begin{prop} \label{prop:allfiberings}
    A fibered marked lens space $L(p,q)$ 
    has a \hvf{} if and only if
    $p \ne 0$ and $q \equiv -1 \bmod p.$
\end{prop}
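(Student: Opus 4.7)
The plan is to combine Theorem \ref{thm:bigequiv} with the realisation of $\UTSig$ as a marked lens space from the proof of Corollary \ref{cor:unitlens}, together with Proposition \ref{prop:lenscover}. Writing the fibering as $M(0; (\al_1, \bt_1), (\al_2, \bt_2))$, the base orbifold $\Sig$ is a sphere with at most two cone points, so in particular is neither the $2$-torus nor the Klein bottle and Theorem \ref{thm:bigequiv} applies. By Theorem \ref{thm:lenspq} we have $p = \al_1\bt_2 + \al_2\bt_1$ and $q = \al'_1\bt_2 + \al_2\bt'_1$, and by the proof of Corollary \ref{cor:unitlens}, $\UTSig$ is the marked lens space $L(-(\al_1+\al_2), -1)$.

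For the ``only if'' direction I would invoke Theorem \ref{thm:bigequiv}(3): a horizontal vector field yields a fiberwise covering of some non-zero degree $d$ from $L(p, q)$ to $\UTSig$. Proposition \ref{prop:lenscover} then forces $dp = -(\al_1+\al_2)$ and $q \equiv -1 \pmod{dp}$, so $p \ne 0$ and $q \equiv -1 \pmod{p}$.

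For the ``if'' direction, assume $p \ne 0$ and $q \equiv -1 \pmod{p}$ and set $d = -(\al_1+\al_2)/p$. The goal is to verify condition (4) of Theorem \ref{thm:bigequiv}. The key step is to establish two algebraic identities, both obtained by multiplying the formula for $q$ by $\al_1$ or $\bt_1$ and using $\al_1\bt'_1 - \al'_1\bt_1 = 1$ to eliminate the primed quantities, then recognising $p = \al_1\bt_2 + \al_2\bt_1$ in the result:
\[
    \al_1 q = \al'_1 p + \al_2
    \qandq
    \bt_1 q = \bt'_1 p - \bt_2.
\]
Substituting $q \equiv -1 \pmod{p}$ into the first identity shows $p \mid \al_1+\al_2$, so $d$ is a non-zero integer with $dp = -(\al_1+\al_2)$; combined with $e(M \to \Sig) = -p/(\al_1\al_2)$ and $\chi(\Sig) = (\al_1+\al_2)/(\al_1\al_2)$ this gives $d \cdot e = \chi$. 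Substituting $q \equiv -1$ into the second identity gives $p \mid \bt_1-\bt_2$, after which a short computation using $\al_2\bt_1 = p - \al_1\bt_2$ verifies
\[
    d\bt_i + 1 = \tfrac{1}{p}\al_i(\bt_j - \bt_i) \in \al_i \bbZ
\]
for $\{i, j\} = \{1, 2\}$. Hence $d\bt_i \equiv -1 \pmod{\al_i}$ and condition (4) of Theorem \ref{thm:bigequiv} holds.

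The main obstacle is the second identity $\bt_1 q \equiv -\bt_2 \pmod{p}$: this is what leverages the assumption $q \equiv -1 \pmod{p}$ strongly enough to deduce $p \mid \bt_1-\bt_2$ and thereby handle the case where $\al_1$ and $\al_2$ share common factors. The first identity alone only produces $\al_i(\beta_1+\beta_2)$-style information modulo $\al_1 \al_2/\gcd$, which is insufficient in that case.
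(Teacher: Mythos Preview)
Your proof is correct. The ``only if'' direction is essentially the paper's argument. For the ``if'' direction, both you and the paper verify condition~(4) of Theorem~\ref{thm:bigequiv}, but the routes differ.

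The paper exploits the freedom in the choice of $\al'_i,\bt'_i$ to normalise the gluing matrix to
\[
\begin{pmatrix}1&p\\0&-1\end{pmatrix},
\]
so that $q=-1$ exactly. After this, setting $d=\al'_1$ makes the two conditions in~(4) fall out almost immediately from the single determinant relation $\al_1\bt'_1-\al'_1\bt_1=1$ together with $\al'_1\bt_2+\al_2\bt'_1=q=-1$.

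You instead work with an arbitrary choice of primed quantities, derive the two identities $\al_1 q=\al'_1 p+\al_2$ and $\bt_1 q=\bt'_1 p-\bt_2$, and verify~(4) by direct computation with $d=-(\al_1+\al_2)/p$. This is more algebra, but it has the mild advantage of producing an intrinsic formula for the degree of the covering, independent of any normalisation; after the paper's normalisation one finds $\al'_1=-(\al_1+\al_2)/p$, so the two values of $d$ agree. Your second identity, giving $p\mid\bt_1-\bt_2$, is precisely the extra information that the paper's normalisation encodes implicitly.

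One small imprecision: in the ``only if'' step, Proposition~\ref{prop:lenscover} strictly gives $q+k_1 p\equiv -1\pmod{dp}$ for some integer $k_1$, not $q\equiv -1\pmod{dp}$; but reducing modulo $p$ still yields $q\equiv -1\pmod p$, so the conclusion stands.
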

\begin{proof}
    First assume $L(p,q)$ has a \hvf{}.
    Then by \cref{thm:bigequiv} and \cref{cor:unitlens},
    it fiberwise covers $L(d p, -1)$ for some integer $d$
    with the further property that $d p \le -2.$
    This implies that $p$ must be non-zero.
    By \cref{prop:lenscover}, $L(p,q) = L(p, -1)$ as a marked lens space
    and so $q \equiv -1 \bmod p.$

    Conversely, assume $p \ne 0$ and $q \equiv -1 \bmod p.$
    As explained earlier in this section,
    we may make choices of longitudes,
    or equivalently choices of $\al'_i$ and $\bt'_i,$
    such that the gluing matrix is
    \[
        \begin{pmatrix}
            \al_1 & \bt_1 \\
            \al'_1 & \bt'_1 \end{pmatrix}
        \begin{pmatrix}
            -1 & 0 \\
            0 & 1 \end{pmatrix}
        \begin{pmatrix}
            \al_2 & \bt_2 \\
            \al'_2 & \bt'_2 \end{pmatrix}
        \inv
        =
        \begin{pmatrix}
            1 & p \\
            * & -1 \end{pmatrix}
        .
    \]
    Then $\al_1 \bt'_1 - \al'_1 \bt_1 = 1$
    as it is the determinant of a matrix in $SL(2, \bbR).$
    \Cref{thm:lenspq} implies that
    $\al'_1 \bt_2 + \al_2 \bt'_1 = -1.$
    These equations may be rewritten as
    \[
        \al'_1 \left( \frac{-\bt_1}{\al_1} \right) = \frac{1}{\al_1} - \bt'_1
        \qandq
        \al'_1 \left( \frac{-\bt_2}{\al_2} \right) = \frac{1}{\al_2} + \bt'_1.
    \]
    One may then show that \cref{prop:coverd} holds with $d = \al'_1.$
    Indeed,
    $d(-\bt_i/\al_i) \equiv 1/\al_i \bmod \bbZ$
    since $\bt'_1$ is an integer, and summing the above equations gives
    \[
        d \cdot e(M \to \Sig)
        =
        d \left( - \frac{\bt_1}{\al_1} - \frac{\bt_2}{\al_2} \right)
        =
        \frac{1}{\al_1} + \frac{1}{\al_2}
        =
        \chi(\Sig).
        \qedhere
    \] \end{proof}
Before proceeding, we attempt here to give a more intuitive
and topological explanation of why $q \equiv -1 \bmod p$
is a necessary and sufficient condition for a \hvf{}
and why $d = \al'_1$ in the proof above.
Since the results have already been rigorously proved above,
we do not give complete details in the following explanation.

First, suppose that we have Seifert fibering on a marked lens space such that
the gluing torus is a union of fibers and that there is a \hvf{}.
For any closed curve on the gluing torus,
we can measure how many turns the \hvf{} makes as we travel around this curve.
This may be done, say, by counting how many times the vector field
crosses the gluing torus and changes from pointing into one solid torus
to pointing into the other.
This counting of turns induces a group homomorphism
$\phi : H_1(T) \to \bbZ$
where $H_1(T)$ is the first homology group of the gluing torus.
Since $M_i$ bounds a meridional disk and the \hvf{} projects
to a non-zero vector field on the disk, the Poincar\'e-Hopf theorem
implies that the vector field must make exactly one clockwise turn
when going around $M_i.$ Hence $\phi(M_i) = -1.$
In fact, this is only restriction on $\phi.$ One can check that
for any \hvf{} defined on the boundary of a standard fibered torus,
that so long as $\phi(M_i) = -1$ it is possible to extend the \hvf{} to all of the
solid torus.

The relation
\[
    \begin{pmatrix}
        M_1 \\
        L_1 \end{pmatrix}
    =
    \begin{pmatrix}
        -q_2 & p \\
        * & q_1 \end{pmatrix}
    \begin{pmatrix}
        M_2 \\
        L_2
    \end{pmatrix}  \]
implies that $M_1 \sim -q_2 M_2 + p L_1$
and so
\[
    -1 = \phi(M_1) = -q_2 \, \phi(M_2) + p \, \phi(L_1)
    = q_2 + p \, \phi(L_1).
\]
That is, $q_2 \equiv -1 \bmod p$ and the marked lens space is $L(p, -1).$

Conversely, suppose we have a Seifert fibering on $L(p, -1)$
and we wish to construct a \hvf{}.
It is enough to find a homomorphism $\phi : H_1(T) \to \bbZ$
such that $\phi(M_1) = \phi(M_2) = -1.$
Up to choosing different longitudes on the solid tori,
we may assume that the gluing matrix is of the form
\[
    \begin{pmatrix}
        M_1 \\
        L_1 \end{pmatrix}
    =
    \begin{pmatrix}
        1 & p \\
        0 & -1 \end{pmatrix}
    \begin{pmatrix}
        M_2 \\
        L_2 \end{pmatrix}
    .
\]
Under such a choice, we may then define $\phi$ by $\phi(M_i) = -1$
and $\phi(L_i) = 0$ for $i = 1,2$ and see that this agrees with the gluing.
In the construction of a Seifert fibering by gluing,
the cycles in homology satisfy
\[
    \begin{pmatrix}
        C_i \\
        H_i \end{pmatrix}
    \, = \,
    \begin{pmatrix}
        \al_i & \bt_i \\
        \al'_i & \bt'_i \end{pmatrix}
    \inv
    \!\!
    \begin{pmatrix}
        M_i \\
        L_i \end{pmatrix}
    \, = \,
    \begin{pmatrix}
        \bt'_i & -\bt_i \\
        -\al'_i & \al_i \end{pmatrix}
    \begin{pmatrix}
        M_i \\
        L_i
    \end{pmatrix}  \]
and so $\phi(H_i) = -\al'_i \phi(M_i) + \al_i \phi(L_i) = \al'_i.$
Since $H_i$ is a regular fiber,
$\phi(H_i)$ measures the number of twists the vector field
makes around the regular fiber and so equals the degree $d$
of the covering $M \to \UTSig.$
This shows that $d = \al'_1 = \al'_2.$
This ends our discussion of the proof of \cref{prop:allfiberings}.
    
\medskip{}

We now prove a version of \cref{thm:lens} for fibered marked lens spaces.

\begin{prop} \label{prop:marklens}
    Suppose $M$ is diffeomorphic to the lens space $L(p,q)$ with $p \ge 0$
    and consider only the Seifert fiberings on $M$ which
    have an invariant of the form $(0; (\al_1, \bt_1), (\al_2, \bt_2)).$
    \begin{enumerate}
        \item        
        If $p = 1$ or $p = 2$,
        then every such fibering on
        $M$ has a \hvf{}.
        \item
        If $p \ge 3$ and $q \equiv \pm 1 \bmod p$,
        then $M$ has infinitely many fiberings of this form
        which support \hvfs{} and infinitely many which do not.
        \item
        For all other cases of $p$ and $q,$
        no fibering of this form on $M$ has a \hvf{}.
    \end{enumerate} \end{prop}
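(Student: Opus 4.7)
The plan is to combine Theorem~\ref{thm:lenspq}, Proposition~\ref{prop:allfiberings}, and Brody's Theorem~\ref{thm:lenshomeo} in a case analysis on $p$. A Seifert invariant $(0;(\al_1,\bt_1),(\al_2,\bt_2))$ produces, by Theorem~\ref{thm:lenspq}, the fibered marked lens space $L(p',q')$ with $p'=\al_1\bt_2+\al_2\bt_1$ and $q'\equiv\al'_1\bt_2+\al_2\bt'_1\pmod{p'}$. Proposition~\ref{prop:allfiberings} then says this fibering has a horizontal vector field iff $p'\ne 0$ and $q'\equiv-1\pmod{p'}$, while Brody's theorem says the underlying manifold is diffeomorphic to $L(p,q)$ iff $|p'|=p$ and $q'\equiv\pm q^{\pm 1}\pmod p$. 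The proposition reduces to checking when these three conditions admit a common solution and, in case (2), to exhibiting infinitely many solutions of each kind.

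Cases (1) and the ``other cases'' half of (3) follow directly. When $p=1$ the congruence $q'\equiv-1\pmod 1$ is vacuous, and when $p=2$ the coprimality $\gcd(q',2)=1$ forces $q'$ odd and hence $q'\equiv-1\pmod 2$; so every such fibering has an HVF. When $p=0$, any fibering of $M$ has $p'=0$ and thus no HVF; and when $p\ge 3$ with $q\not\equiv\pm 1\pmod p$, combining $q'\equiv-1\pmod p$ with $q'\equiv\pm q^{\pm 1}\pmod p$ would force $q\equiv\pm 1\pmod p$, a contradiction.

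The heart of the argument is case (2). I would construct two explicit infinite families of Seifert invariants, all realizing the unoriented manifold $M$. Setting $\al_1=1$, $\al'_1=0$, $\bt'_1=1$, and $\bt_1=\ep\in\{+1,-1\}$ (so the first matrix sits in $SL(2,\bbZ)$) collapses the defining formulas to $p'=\bt_2+\ep\al_2$ and $q'=\al_2$. For suitable choices of $\ep$ and a residue class modulo $p$ for $\al_2$, one family satisfies $q'\equiv-1\pmod p$ (hence supports an HVF by Proposition~\ref{prop:allfiberings}), while the other satisfies $q'\equiv+1\pmod p$ (hence no HVF, since $1\not\equiv-1\pmod p$ once $p\ge 3$). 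Letting $\al_2\to\infty$ through each residue class gives unbounded cone orders on the base orbifold, so the fiberings within each family are pairwise non-isomorphic. I expect the main obstacle to be bookkeeping around the two distinct marked lens space structures on the single oriented manifold $L(p,q)$: the two families must be arranged to present $M$ via different choices of $p'=\pm p$, and the coprimality $\gcd(\al_2,\bt_2)=1$ must be maintained throughout each family, which requires careful choice of the arithmetic progression for $\al_2$.
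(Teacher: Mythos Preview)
Your proposal is correct and rests on the same two ingredients as the paper's proof: Proposition~\ref{prop:allfiberings} (the HVF condition on a fibered marked lens space is exactly $q'\equiv -1\bmod p'$) together with Brody's Theorem~\ref{thm:lenshomeo}. Cases (1) and (3) are handled identically in both.

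For case (2) your route is more computational than the paper's. The paper does not build explicit families of Seifert invariants. Instead it uses the structural observation, developed earlier in the section, that the \emph{marked} lens space alone (i.e.,\ the residue $q'\bmod p'$) decides whether every fibering on it has an HVF, and that one oriented manifold carries two distinct markings $L(p',q')$ and $L(-p',-q')$. For $p\ge 3$ and $q\equiv\pm1$, the manifold is simultaneously $L(p,-1)$ (all of whose infinitely many fiberings have an HVF) and $L(p,+1)$ (none of whose fiberings do), and the conclusion follows immediately. Your explicit $\al_1=1$ families work and make the ``infinitely many'' claim concrete, but you can bypass the bookkeeping you anticipate (coprimality, choosing $p'=\pm p$, arithmetic progressions for $\al_2$) by invoking the two-markings observation directly; the existence of infinitely many fiberings on a fixed marked lens space is then the only thing to note, and that is straightforward.
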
    
\begin{proof}
    We prove this using \cref{thm:lenshomeo} and \cref{prop:allfiberings}.
    If $p = 0,$ then the only marked lens space diffeomorphic to $M$ is
    $L(0,1) = S^2 \ti S^1$ and \cref{prop:allfiberings} implies that no
    fibering has a \hvf{}.

    If $p = 1$ or $p = 2,$ 
    the fact that $q$ is coprime to $p$ implies that $q \equiv -1 \bmod p.$
    That is, the only marked lens space diffeomorphic to $M$
    is $L(p, -1)$ and so every fibering has a \hvf{}.

    If $M$ is diffeomorphic to $L(p,q)$ with
    $|p| > 3$ and $q \equiv -1 \bmod p,$
    then $M$ is diffeomorphic to both $L(p,-1)$ and $L(p,+1).$
    The former has infinitely many fiberings,
    all of which have a \hvf{};
    the latter has infinitely many fiberings,
    none of which has a \hvf{}.

    Finally,
    if $q \not\equiv \pm 1 \bmod p,$
    then \cref{thm:lenshomeo} shows that $M$ is not diffeomorphic to $L(p, -1)$
    and so no fibering of the form $(0; (\al_1, \bt_1), (\al_2, \bt_2))$
    will have a \hvf{}.
\end{proof}    

\subsection{Other fiberings of lens spaces}

To complete the proof of \cref{thm:lens},
we must consider all possible Seifert fiberings of lens spaces.
To do this, we first list out all closed 3-manifolds
which have multiple possible Seifert fiberings.

\begin{prop} \label{prop:multifiber}
    A Seifert fibering of an oriented 3-manifold (without boundary)
    is unique up to isomorphism
    with the following exceptions:
    \begin{enumerate}
        \item Every lens space has infinitely many different Seifert fiberings,
        and
        any Seifert fibering of a lens space
        is either of the form
        \[
            (0; (\al_1, \bt_1), (\al_2, \bt_2))
            \qorq
            (-1; (\al, \pm 1)).
        \]
        \item
        For a non-zero rational number p/q written in lowest terms,
        write $\al_1 = |p|,$
        $\al_3 = |q|,$
        and define $\bt_1$ and $\bt_3$ such that
        $\bt_3/\al_3 = \al_1/\bt_1 = p/q,$
        then the invariants
        \[
            (0; (2, 1), (2, -1), (\al_3, \bt_3))
            \qandq
            (-1; (\al_1, \bt_1))
        \]
        give fiberings of the same manifold.
        This manifold is a lens space if and only if $\al_3 = 1.$
        \item
        The unit tangent bundles of the $2222$ orbifold
        and the Klein bottle
        are diffeomorphic as manifolds.
    \end{enumerate} \end{prop}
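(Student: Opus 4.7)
The plan is to cite a general uniqueness theorem for Seifert fiberings of closed oriented 3-manifolds (Waldhausen, as presented in \cite{jn1983lectures}) in order to reduce the proposition to verifying the three listed exceptional families explicitly. The main tools available in the excerpt are the gluing-matrix formula of \cref{thm:lenspq}, the covering correspondences of \cref{prop:poscoverd}, \cref{prop:orientd}, and \cref{prop:coverd}, and the explicit unit-tangent-bundle invariant of \cref{prop:UTinvt}.

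For item $(1)$, the existence of infinitely many fiberings of the form $(0;(\al_1,\bt_1),(\al_2,\bt_2))$ on a fixed lens space $L(p,q)$ is immediate from \cref{thm:lenspq}: varying the coprime pairs subject to $\al_1\bt_2+\al_2\bt_1 = p$ and $\al'_1 \bt_2 + \al_2 \bt'_1 \equiv q \bmod p$ produces infinitely many non-isomorphic invariants. To show that the two listed normal forms exhaust all fiberings of a lens space, I would argue on the base orbifold: since $\pi_1 L(p,q)$ is finite cyclic, any Seifert base must have virtually cyclic orbifold fundamental group, forcing $\Sig$ to be either $S^2$ with at most two cone points or $\RP^2$ with at most one cone point. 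Finally, the fact that $(-1;(\al,\pm 1))$ genuinely yields a lens space is verified by lifting to the orientable double cover of the base via \cref{prop:poscoverd}, reducing to a fibering of the form $(0;(\al,\bt),(\al,\bt))$, and then applying \cref{thm:lenspq}.

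For item $(2)$, I would construct an explicit fiber-preserving identification between $M(-1;(\al_1,\bt_1))$ and $M(0;(2,1),(2,-1),(\al_3,\bt_3))$ by exploiting a common ingredient: a twisted $I$-bundle over the Klein bottle. This piece can be Seifert fibered either over a M\"obius band with no exceptional fibers (matching the $(-1;\,)$ side after removing a tubular neighbourhood of the one exceptional fiber) or, equivalently, over a disk with two order-$2$ exceptional fibers whose $\bt$-invariants cancel (matching the $(0;\,)$ side after removing a tubular neighbourhood of the exceptional fiber of order $\al_3$). The arithmetic relationship between $(\al_1,\bt_1)$ and $(\al_3,\bt_3)$ then comes out of a gluing-matrix calculation in the spirit of \cref{thm:lenspq}, tracked under the two framings; the manifold is a lens space precisely when the residual exceptional fiber is trivial, i.e.\ $\al_3=1$.

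For item $(3)$, \cref{prop:UTinvt} applied to the Klein bottle ($g=-2$, $n=0$, $\chi=0$) and to the $2222$ orbifold ($g=0$, $n=4$, $\chi(\Sig_0)=2$, all $\al_i=2$) gives two fiberings of Euler number $0$ over parabolic base orbifolds. Both total spaces are therefore flat (Bieberbach) $3$-manifolds, and I would exhibit the diffeomorphism by realizing both unit tangent bundles as quotients of a common flat Euclidean cover by commuting isometric involutions, the upstairs cover being essentially $\bbT^2 \ti \bbS^1$ with its standard flat structure. The main obstacle is item $(2)$, where the bookkeeping of longitudes and boundary framings when trading a non-orientable base for an orientable one with two additional order-$2$ fibers must be handled carefully; the blanket appeal to Waldhausen uniqueness at the outset is a second substantive input that I would quote rather than reprove.
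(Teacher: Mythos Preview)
Your proposal is workable but considerably more laborious than what the paper does, and in one place you invoke the wrong lemma.

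The paper's proof is almost entirely a citation. It quotes \cite[Theorem 2.3]{hat2007notes}, which already enumerates the exceptional Seifert-fibered manifolds in Hatcher's notation (five cases (a)--(e)), discards the two boundary cases, and then simply translates the remaining three into the paper's invariant conventions. The identification in item~(3) with unit tangent bundles is just an application of \cref{prop:UTinvt}; no flat-manifold argument is given or needed. The identification in item~(2) is read off directly from Hatcher's case~(d); no twisted-$I$-bundle construction is carried out. The only genuinely new computation in the paper's proof is a presentation of $\pi_1\big(M(-1;(\al_1,\bt_1))\big)$, taken from \cite[\S6]{jn1983lectures}, used to show that this group is non-cyclic when $|\bt_1|>1$, hence the manifold is a lens space iff $\al_3=1$. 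So the substantive input is Hatcher's theorem plus one fundamental-group calculation, not separate verifications of all three families.

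Your plan would also work, but the constructive arguments for items~(2) and~(3) are redundant once you cite a uniqueness theorem that already lists the exceptions; such theorems (Hatcher's or the version in \cite{jn1983lectures}) state the coincidences, they do not leave them to be checked. One concrete slip: in your treatment of item~(1) you propose to verify that $(-1;(\al,\pm1))$ is a lens space by ``lifting to the orientable double cover of the base via \cref{prop:poscoverd}''. That proposition concerns fiberwise covers over a fixed base, not covers induced by a covering of the base orbifold, so it does not apply here. The paper avoids this issue entirely by computing the fundamental group instead.
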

\begin{proof}
    This is a re-statement of a known result on the uniqueness of Seifert fiberings.
    As a reference, we give the lecture notes of Hatcher
    \cite[Theorem 2.3]{hat2007notes}.
    The theorem as stated there has five cases which we quote here using
    the notation conventions from those notes:
    \begin{enumerate}
        [label=(\alph*)]
        \item
        $M(0,1; \al/\bt),$ the various model Seifert fiberings of $S^1 \ti D^2.$
        \item
        $M(0,1; 1/2, 1/2) = M(-1,1;),$ two fiberings of
        \begin{math}
            S^1 \widetilde{\ti}
            S^1 \widetilde{\ti}
            I. \end{math}
        \item
        $M(0,0; \al_1/\bt_1, \al_2/\bt_2),$
        various fiberings of $S^3, S^1 \ti S^2$ and lens spaces.
        \item
        $M(0,0; 1/2, -1/2, \al/\bt) = M(-1, 0; \bt/\al)$ for $\al,\bt \ne 0.$
        \item
        $M(0,0; 1/2, 1/2, -1/2, -1/2) = M(-2, 0;),$
        two fiberings of 
        \begin{math}
            S^1 \widetilde{\ti}
            S^1 \widetilde{\ti}
            S^1.
        \end{math} \end{enumerate}
    Cases (a) and (b) are for manifolds with boundary
    and can be ignored here.
    We have used \cref{prop:UTinvt} above to rephrase case (e)
    in terms of unit tangent bundles.
    Since the manifold in case (e) is finitely covered by the 3-torus,
    one can show that
    it does not overlap with cases (c) or (d).
    Assuming for simplicity that $\al$ and $\bt$ are positive,
    the invariants in case (d) may be written in our notation as
    \[
        (0; (2, 1), (2, -1), (\al_3, \bt_3))
        \qandq
        (-1; (\al_1, \bt_1))
    \]        
    where $\al/\bt = \bt_1/\al_1 = \al_3/\bt_3$ and $\al_i > 0.$
    Note that if $\al_3 = 1,$ then 
    \[
        M(0; (2, 1), (2, -1), (1, \bt_3))
        =
        M(0; (2, -1), (2, 2 \bt_3 - 1))
    \]
    is a fibered marked lens space,
    and so there is actually some overlap
    between cases (c) and (d).
    Using the formula in \cite[\S6]{jn1983lectures},
    the fundamental group of the manifold
    $M(-1; (\al_1, \bt_1))$ is given by
    \[
        \langle a, q, h \ : 
        \ a \inv h a = h \inv,
        \ [h,q] = 1,
        \ q^{\al_1} h^{\bt_1} = 1,
        \ q a^2 = 1
        \rangle
    \]
    or equivalently
    \[
        \langle a, h \ : 
        \ a \inv h a = h \inv,
        \ a^{2 \al_1} = h^{\bt_1}
        \rangle.
    \]
    If $|\bt_1| > 1,$ this group is non-cyclic.
    and the manifold is not a lens space.
    (In fact, the manifold is a prism manifold.)
\end{proof}

Hence, the only fiberings on lens spaces left to study
are those of the form $(-1; (\al, \pm 1)).$
Up to a choice of orientation, these are all unit tangent bundles.

\begin{prop} \label{prop:otherfiber}
    Let $M$ be diffeomorphic to a lens space and suppose $M$ is Seifert
    fibered, but is not a fibered marked lens space.
    Then (up to a change of orientation) there is $\al \ge 1$
    such that $M$ is unit tangent bundle of the $\al \ti$ orbifold,
    $M$ is diffeomorphic to $L(4 \al, 2 \al \pm 1),$
    and the fibering has a \hvf{}.
\end{prop}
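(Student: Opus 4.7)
The plan is to dispatch this via the classification of non-unique Seifert fiberings in \cref{prop:multifiber} and the explicit computation of the Seifert invariant of $\UTSig$ in \cref{prop:UTinvt}. Since $M$ is a lens space but not a fibered marked lens space, the fibering cannot be of the form $(0; (\al_1, \bt_1), (\al_2, \bt_2))$. Consulting the list in \cref{prop:multifiber}, the only remaining option for a Seifert fibering of a lens space is one of the form $(-1; (\al, \pm 1))$ for some $\al \ge 1$, noting that the cases in item (2) of that proposition that also yield lens spaces (when $\al_3 = 1$) give back fibered marked lens spaces, and the case in item (3) is excluded because the manifold there is finitely covered by the 3-torus and is not a lens space.

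Next I would identify this manifold with a unit tangent bundle. Apply \cref{prop:UTinvt} to the $\al\ti$ orbifold: the underlying surface is $\bbR P^2$, so $g = -1$, $\chi(\Sig_0) = 1$, and there is a single cone point of order $\al$, giving the invariant $(-1;\, (1, 0),\, (\al, -1))$. Removing the $(1, 0)$ pair yields $(-1; (\al, -1))$, which matches one of the two orientations in the list above. The other sign $(-1; (\al, +1))$ is then the opposite orientation by \cref{prop:orientd}. So up to orientation, $M$ is the unit tangent bundle $\UT(\al\ti)$. Existence of a \hvf{} is then immediate: the identity map $\UTSig \to \UTSig$ is a degree-one fiberwise covering, so condition (3) of \cref{thm:bigequiv} holds, and therefore so does (1). (The hypothesis that $\Sig$ is not a torus or Klein bottle is satisfied since $\al\ti$ is non-orientable and has a cone point unless $\al=1$, and even for $\al=1$ it is $\bbR P^2$, not a torus or Klein bottle.)

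The remaining, and most computational, step is to pin down the lens space as $L(4\al, 2\al \pm 1)$. For this I would use \cref{prop:multifiber}(2) to rewrite $(-1; (\al, \pm 1))$ as an invariant of the form $(0; (2, 1), (2, -1), (1, \pm \al))$, where the choice of sign for $\bt_3 = \pm\al$ tracks the sign in $(-1;(\al, \pm 1))$ via $\bt_3/\al_3 = \al_1/\bt_1$. Then absorb the $(1, \pm\al)$ pair by the first of the allowed transformations of the Seifert invariant: adding $\pm\al$ to $\bt_1/\al_1 = 1/2$ converts $\bt_1$ from $1$ to $1 \pm 2\al$ and deletes $(1, \pm\al)$. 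The resulting invariant $(0; (2, 1 \pm 2\al), (2, -1))$ is now of the fibered marked lens space form, so \cref{thm:lenspq} applies directly.

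Plugging into \cref{thm:lenspq} with the SL$(2,\bbZ)$ completions $(\al_1', \bt_1') = (1, \bt_1' )$ chosen so that $2\bt_1' - (1 \pm 2\al) = 1$, i.e.\ $\bt_1' = 1 \pm \al$, and $(\al_2', \bt_2') = (1, 0)$, one reads off
\[
    p = 2(-1) + 2(1 \pm 2\al) = \pm 4\al,
    \qquad
    q = 1\cdot(-1) + 2 \cdot (1 \pm \al) = 1 \pm 2\al.
\]
As an unoriented 3-manifold this is $L(4\al, 2\al \pm 1)$, which is the claim. I expect the main obstacle to be keeping track of the sign conventions throughout this last computation — in particular, verifying that the two signs in ``$2\al \pm 1$'' track the sign in $(-1;(\al, \pm 1))$ correctly, rather than being swapped by the orientation-reversal of \cref{prop:orientd} or by the ambiguity $L(p,q) = L(-p, -q)$ of unmarked lens spaces.
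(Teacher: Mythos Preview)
Your proposal is correct and follows essentially the same approach as the paper: reduce via \cref{prop:multifiber} to the invariant $(-1;(\al,\pm1))$, identify it as $\pm\UT(\al\ti)$ via \cref{prop:UTinvt} and \cref{prop:orientd}, obtain the \hvf{} from \cref{thm:bigequiv}, convert to genus-$0$ form using \cref{prop:multifiber}(2), absorb the $(1,\pm\al)$ pair, and compute $p,q$ via \cref{thm:lenspq}. The only differences are cosmetic---the paper fixes the orientation first and absorbs into the $(2,-1)$ pair rather than the $(2,1)$ pair, and it displays the explicit gluing-matrix product before invoking \cref{thm:lenshomeo} to obtain the $\pm$ in $2\al\pm1$.
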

\begin{remark}
    Recall from the orbifold notation introduced in \cref{sec:orbifold}
    that the $\al \ti$ orbifold is $\bbR P^2$ with a cone point of order $\al$
    added. If $\al = 1,$ this orbifold is $\bbR P^2$ itself.
\end{remark}
\begin{proof}
    By \cref{prop:multifiber}, the fibering must have Seifert invariant
    $(-1; (\al, \pm 1)).$
    By \cref{cor:evchi}, up to a possible change of orientation
    we may assume that the invariant is $(-1; (\al, - 1)).$
    \Cref{prop:UTinvt} then implies that $M$ is the unit tangent
    bundle of the $\al \ti$ orbifold
    and \cref{thm:bigequiv} shows that it has a \hvf{}.
    By \cref{prop:multifiber}, this manifold is diffeomorphic
    to
    \[
        M(0; (2, 1), (2, -1), (1, \al))
        =
        M(0; (2, 1), (2, 2 \al - 1)).
    \]
    Using \cref{thm:lenspq} or directly from the computation
    \[
        \begin{pmatrix}
            2 & 2 \al - 1 \\
            1 & \al \end{pmatrix} 
        \begin{pmatrix}
            -1 & 0 \\
            0 & 1 \end{pmatrix}
        \begin{pmatrix}
            2 & 1 \\
            1 & 1 \end{pmatrix}
        \inv
        =
        \begin{pmatrix}
            -(2 \al + 1) & 4 \al \\
            * & 2 \al + 1
        \end{pmatrix}  \]
    one may show that this lens space is
    $L(4 \al, 2 \al + 1).$
    By \cref{thm:lenshomeo},
    $L(p, q)$ is diffeomorphic to $M$
    if and only if $|p| = 4 \al$ and $q \equiv 2 \al \pm 1 \bmod p$.
\end{proof}
This shows that beyond the fibered marked lens spaces,
there is exactly one additional Seifert fibering on
$L(4 \al, 2 \al \pm 1)$ for each $\al \ge 1.$
\Cref{thm:lens} now follows as a combination of
propositions \ref{prop:marklens} and \ref{prop:otherfiber}.

\section{Elliptic orbifolds} \label{sec:elliptic} 

We now consider Seifert fiberings over elliptic orbifolds.
We start by listing out all of the possible orbifolds.

\begin{prop} \label{prop:ellorbs}
    An elliptic orbifold is of one of the following forms:

\begin{multicols}{2}
    \begin{enumerate}
        \item the $pp$ orbifold with $p \ge 1;$
        \item
        the $2 2 p$ orbifold with $p \ge 2;$
        \item
        the $2 3 q$ orbifold with $3 \le q \le 5;$
        \item
        the $p \ti$ orbifold with $p \ge 1.$
    \end{enumerate}
\end{multicols}
\end{prop}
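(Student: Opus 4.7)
The plan is to enumerate orbifolds by the topology of the underlying surface $\Sig_0$ and by the number and orders of cone points, using the Euler characteristic formula as the main constraint, and then to discard the bad ones via the earlier classification of bad orbifolds.

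First I would observe that since every cone point contributes $1 - 1/\al_i \ge 1/2$ to the sum in
\[
    \chi(\Sig) = \chi(\Sig_0) - \sum_i (1 - 1/\al_i),
\]
the hypothesis $\chi(\Sig) > 0$ already forces $\chi(\Sig_0) > 0$, so $\Sig_0 \in \{S^2, \bbR P^2\}$, and moreover the number of cone points is at most $2 \chi(\Sig_0)$. This reduces the whole proposition to a short finite case check.

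In the case $\Sig_0 = \bbR P^2$ we have $\chi(\Sig_0) = 1$, and at most one cone point is admissible, giving exactly the family $p\ti$ for $p \ge 1$ (with $p = 1$ corresponding to $\bbR P^2$ itself). In the case $\Sig_0 = S^2$ I would split by the number $n$ of cone points. For $n = 0$ we get the $11$ orbifold, which belongs to the $pp$ family at $p = 1$. For $n = 1$ the resulting orbifold is bad and excluded. For $n = 2$ with orders $p, q \ge 2$, the earlier classification rules out $p \ne q$ as bad, leaving the $pp$ family for $p \ge 2$. For $n = 3$ with orders $p \le q \le r$, the remaining work is to solve the Diophantine inequality $\tfrac{1}{p} + \tfrac{1}{q} + \tfrac{1}{r} > 1$: forcing $p = 2$, then splitting on $q = 2$ and $q = 3$, recovers the $22p$ family ($p \ge 2$) and the three sporadic cases $233, 234, 235$; any choice with $p \ge 3$ or $q \ge 4$ violates the inequality by a direct estimate.

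The last piece is to check that each orbifold on the resulting list is \emph{good}, since the Euler characteristic alone does not detect this. I do not expect this to be a real obstacle: each $pp$, $22p$, and $23q$ orbifold is the quotient of $S^2$ by the corresponding finite rotation group (cyclic, dihedral, tetrahedral, octahedral, or icosahedral), and each $p\ti$ orbifold is double-covered by the $pp$ orbifold, so it is also good. The only bookkeeping subtlety I would flag is the use of $\al_i = 1$ in the notation, but this just permits the $p = 1$ members of the $pp$ and $p\ti$ families and does not produce any further cases.
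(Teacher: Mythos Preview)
Your argument is correct and complete. The paper does not actually prove this proposition: it simply refers the reader to Scott's survey \cite{sco1983geometries} (the section on the geometry of $S^2 \times \bbR$) for a proof. So you have supplied a self-contained argument where the paper only gives a citation. Your approach --- bound $\chi(\Sig_0)$ and the number of cone points using the Euler-characteristic formula, then solve the inequality $1/p + 1/q + 1/r > 1$ and discard the bad orbifolds --- is exactly the standard one, and it is the same ``enjoyable exercise'' the paper alludes to immediately after the analogous \cref{prop:parorbs} in the parabolic case. One small imprecision: your stated bound ``at most $2\chi(\Sig_0)$ cone points'' should be strict (the inequality $n/2 \le \sum (1-1/\al_i) < \chi(\Sig_0)$ gives $n < 2\chi(\Sig_0)$), which is why you correctly treat only $n \le 3$ for $S^2$ and $n \le 1$ for $\bbR P^2$; this does not affect the argument.
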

Note that the 2-sphere and real projective plane occur
as special cases with $p = 1.$
For further details and a proof of this result,
see the section of \cite{sco1983geometries} entitled
``\emph{The geometry of $S^2 \ti \bbR$}''
and the references therein.

\begin{prop} \label{prop:notlens}
    Let $M \to \Sig$ be a Seifert fibering over
    an elliptic base orbifold.
    Then $M$ is diffeomorphic to a lens space if and only if either
    \begin{enumerate}
        \item $\Sig$ is the $pp$ orbifold with $p \ge 1;$ or
        \item
        $M = \pm \UTSig$ where $\Sig$ is the \ $p \ti$ orbifold with $p \ge 1.$
    \end{enumerate} \end{prop}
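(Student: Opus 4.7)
The plan is to combine Proposition~\ref{prop:multifiber} (which pins down exactly when a closed 3-manifold admits more than one Seifert fibering, and in doing so constrains the possible Seifert invariants of a lens space) with Proposition~\ref{prop:otherfiber} and the classification of elliptic orbifolds from Proposition~\ref{prop:ellorbs}.

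For the forward direction, I would assume $M\to\Sigma$ is a Seifert fibering with $\Sigma$ elliptic and $M$ diffeomorphic to a lens space, and then argue that the Seifert invariant must take one of two shapes: either
\[
    (0;(\al_1,\bt_1),(\al_2,\bt_2))
    \qorq
    (-1;(\al,\pm 1)).
\]
All other invariants listed in \cref{prop:multifiber} can be dismissed: case (e) there produces a manifold finitely covered by the 3-torus, which is not a lens space; and invariants of shape $(-1;(\al,\bt))$ with $|\bt|>1$ yield prism manifolds with non-cyclic fundamental group, as shown at the end of the proof of \cref{prop:otherfiber}. If the invariant has the first shape, then $\Sigma$ is a 2-sphere with at most two cone points. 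Ellipticity rules out the bad orbifolds (a sphere with a single cone point, or with two cone points of differing orders), forcing $\Sigma$ to be a $pp$ orbifold. If the invariant has the second shape, \cref{prop:otherfiber} directly identifies $M$ with $\pm\UTSig$ for $\Sigma$ the $\al\ti$ orbifold, which is automatically elliptic since $\chi(p\ti) = 1/p > 0$.

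The reverse direction is immediate in both cases. If $\Sigma$ is the $pp$ orbifold, any Seifert fibering over $\Sigma$ has invariant $(0;(p,\bt_1),(p,\bt_2))$, which by case (c) of \cref{prop:multifiber} produces a (fibered marked) lens space. If $M=\pm\UT(p\ti)$, then \cref{prop:otherfiber} exhibits $M$ as $\pm L(4p,2p\pm 1)$.

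The main obstacle is really just bookkeeping: I have to check that the remaining elliptic orbifolds from \cref{prop:ellorbs}, namely $22p$ with $p\ge 2$ and $23q$ with $3\le q\le 5$, are genuinely excluded. Each of these has three cone points of order strictly greater than one, so the corresponding Seifert invariants have three nontrivial $(\al_i,\bt_i)$ pairs and cannot match either of the two admissible shapes above. Writing this exclusion carefully, while also making clear that cases (1) and (2) of the proposition refer to the \emph{fibering} (not the diffeomorphism type of $M$, which may admit multiple distinct fiberings over different base orbifolds), is the only part that requires care.
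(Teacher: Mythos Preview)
Your argument is correct and follows essentially the same route as the paper: both proofs invoke \cref{prop:multifiber} (specifically its item~(1)) to constrain the Seifert invariant of a lens space to the two shapes $(0;(\al_1,\bt_1),(\al_2,\bt_2))$ or $(-1;(\al,\pm 1))$, then use \cref{prop:otherfiber} for the $p\ti$ case and \cref{prop:ellorbs} to see that the $22p$ and $23q$ orbifolds are excluded. One small slip: the non-cyclic fundamental group computation for $(-1;(\al,\bt))$ with $|\bt|>1$ is at the end of the proof of \cref{prop:multifiber}, not \cref{prop:otherfiber}; also note that this dismissal is redundant, since item~(1) of \cref{prop:multifiber} already asserts outright that lens-space fiberings take only the two listed forms.
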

\begin{proof}
    If $\Sig$ is the $pp$ orbifold, the Seifert invariant may be written
    as
    \[    
        (g; (\al_1, \bt_1), (\al_1, \bt_2))
    \]
    with $\al_1 = \al_2 = p$
    and so $M$ is a fibered marked lens space.
    If $\Sig$ is the $p \ti$ orbifold, then
    propositions \ref{prop:multifiber} and \ref{prop:otherfiber}
    imply that $M$ is diffeomorphic to a lens space if and only if
    $M = \pm \UTSig.$
    The other elliptic orbifolds listed in \cref{prop:ellorbs}
    may be ruled out by \cref{prop:multifiber}.
\end{proof}

Now, consider Seifert fiber spaces $M_1$ and $M_2$ over an orbifold $\Sig$
and suppose there is a fiberwise covering $M_1 \to M_2$ of degree $d > 1.$
\Cref{cor:eulerd} states that $d \cdot e(M_1 \to \Sig) = e(M_2 \to \Sig).$
Roughly speaking,
the Euler number measures the amount of ``twisting'' of the
fibering, and the effect of quotienting $M_1$ down to $M_2$
is to increase this twisting by a factor of $d.$
Conversely, if we consider $M_2$ first,
finding a fiberwise covering map $M_1 \to M_2$ 
is in some sense equivalent to finding a way
to reduce the twisting of the fibering by a factor of $d.$
If we know, however, that the fibering $M_2 \to \Sig$
already has the minimum possible amount of (non-zero)
twisting for a fibering over $\Sig,$ then it cannot be fiberwise
covered by another fibering.

This is the technique we will use to prove \cref{thm:elliptic}.
We will show for all but one of the cases
that $e(\UTSig \to \Sig)$ is the smallest possible positive Euler number
for any Seifert fibering over $\Sig.$
In the exceptional case, a degree two covering is possible,
but $M$ must be a lens space.

\begin{prop} \label{prop:ellcover}
    Suppose $\Sig$ is an elliptic orbifold and
    $M \to \UTSig$ is a fiberwise covering
    of positive degree $d.$
    Then the degree $d$ is at most two.
    Moreover, the $d = 2$ case occurs exactly when $M$ is of the form
    \begin{math}
        M(0; \ (\al, \bt_1), \ (\al, \bt_2))
    \end{math}
    where $\al \ge 1$ is odd,
    \[    
        \bt_1 \ = \ \tfrac{\al}{2} \ - \ \tfrac{1}{2}
        \qandq
        \bt_2 \ = \ -\tfrac{\al}{2} \ - \ \tfrac{1}{2}.
    \] \end{prop}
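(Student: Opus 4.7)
The plan is to combine the Euler number identity with Proposition \ref{prop:coverd} and a case-by-case sweep of the four families of elliptic orbifolds listed in \cref{prop:ellorbs}. Combining \cref{cor:eulerd} and \cref{cor:evchi} gives the fundamental constraint
\[
    d \cdot e(M \to \Sig) \ = \ e(\UTSig \to \Sig) \ = \ \chi(\Sig).
\]
Next, \cref{prop:coverd} applied to \cref{prop:UTinvt} tells us that $M$ has Seifert invariant of the form $(g; (\al_1, \bt_1), \ldots, (\al_n, \bt_n))$ with the same $\al_i$ as in $\Sig,$ with $d$ coprime to every $\al_i,$ and with
\[
    d\bt_i \equiv -1 \bmod \al_i \qquad \text{for } i = 1, \ldots, n.
\]
The strategy is to use the second constraint to determine the parity/divisibility properties of $d,$ and then use the first constraint to bound $d$ itself.

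For each family in \cref{prop:ellorbs}, I compute $\chi(\Sig)$ and note that $e(M \to \Sig) = -\sum \bt_i/\al_i$ is a rational number whose denominator divides $\mathrm{lcm}(\al_1,\ldots,\al_n),$ i.e.\ the same denominator as $\chi(\Sig).$ In case (2), the $22p$ orbifold gives $\chi(\Sig) = 1/p$ and $e(M \to \Sig) = a/(2p)$ for some integer $a,$ so $d \mid 2;$ but $d$ must be coprime to $2,$ forcing $d = 1.$ In case (3), the three orbifolds yield $\chi(\Sig) \in \{1/6, 1/12, 1/30\},$ each already written with the denominator $\mathrm{lcm}(\al_i),$ so $d = 1.$ In case (4), the $p\ti$ orbifold has invariant $(-1;(p,\bt))$ and $\chi(\Sig) = 1/p,$ giving $d\bt = -1$ and thus $d = 1.$ In each of these cases the degree is forced to be $1.$

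The remaining case is (1), the $pp$ orbifold, where $\chi(\Sig) = 2/p$ and the invariant is $(0; (p, \bt_1), (p, \bt_2)).$ Now $d \cdot e(M \to \Sig) = \chi(\Sig)$ becomes $d(\bt_1 + \bt_2) = -2,$ so $d \in \{1, 2\}.$ For $d = 2,$ coprimality forces $p$ to be odd, and the congruence $2\bt_i \equiv -1 \bmod p$ gives $\bt_i \equiv (p-1)/2 \bmod p.$ Writing $\bt_1 = (p-1)/2 + k_1 p$ and $\bt_2 = (p-1)/2 + k_2 p,$ the equation $\bt_1 + \bt_2 = -1$ reduces to $k_1 + k_2 = -1.$ Using the Seifert invariant equivalence that allows shifting $\bt_1 \to \bt_1 + p$ together with $\bt_2 \to \bt_2 - p,$ I may normalize to $k_1 = 0,$ $k_2 = -1,$ which yields exactly $\bt_1 = \tfrac{\al}{2} - \tfrac{1}{2}$ and $\bt_2 = -\tfrac{\al}{2} - \tfrac{1}{2}$ with $\al = p.$

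The main obstacle is really just bookkeeping: one must carefully verify in each orbifold family that the coprimality constraint on $d$ combined with the integrality of $e(M \to \Sig)$'s numerator (relative to a common denominator) rules out $d \ge 2,$ and that the non-orientable case (4) behaves as expected despite the negative genus. The final uniqueness claim in case (1) is subtle because a priori the pair $(\bt_1, \bt_2)$ has a one-parameter family of integer solutions, so I must explicitly invoke the Seifert invariant equivalence to identify these as the same fibering before extracting the canonical representative.
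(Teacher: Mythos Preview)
Your proposal is correct and follows essentially the same approach as the paper: both combine the Euler-number identity $d\cdot e(M\to\Sig)=\chi(\Sig)$ from \cref{cor:eulerd,cor:evchi} with the coprimality constraint on $d$ from \cref{prop:coverd}, and then run through the elliptic orbifolds of \cref{prop:ellorbs}. The only difference is organizational---the paper splits the $22p$ and $pp$ families by parity of $p$ into separate cases, whereas you handle each family in one stroke by writing $e(M\to\Sig)$ over the common denominator and invoking $\gcd(d,2)=1$ or $\gcd(d,p)=1$ directly; the resulting arguments and the derivation of the $d=2$ normal form are otherwise identical.
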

The two-fold covering above with $\al = 1$
corresponds to the so-called ``belt trick'' or ``plate trick''
where $S^3$ double-covers $\UT S^2.$

\begin{proof}
    We proceed through the cases in \cref{prop:ellorbs}
    in an order which makes the overall proof easiest to follow.

    
    \pagebreak[3]

    \textbf{Case $1.$} $\Sig$ is the $23q$ orbifold for $3 \le q \le 5.$

    We give a full proof for the $235$ orbifold.
    Here $e(\UTSig \to \Sig) = \chi(\Sig) = 1/30$
    and $M$ has a Seifert invariant which may be written as
    \begin{math}
        (0; (2, \bt_1), (3, \bt_2), (5, \bt_3))
    \end{math}
    for some values of $\bt_i.$
    The Euler number of $M \to \Sig$ is a linear combination
    of $1/2$, $1/3$ and $1/5$ with integer coefficients
    and hence must be an integer multiple of 1/30.
    Say $k$ is such that $e(M \to \Sig) = k/30.$
    Then $d \cdot e(M \to \Sig) = e(\UTSig \to \Sig)$
    implies that $d k = 1.$
    As $d$ and $k$ are integers and $d$ is positive,
    the only possibility is that $d = k = 1.$
    The $233$ and $234$ orbifolds have similar proofs
    with 1/6 and 1/12 replacing 1/30 above.

    \smallskip{}

    \textbf{Case $2.$} $\Sig$ is the $22p$ orbifold where $p$ is even.

    Here $e(\UTSig \to \Sig) = \chi(\Sig) = 1/p$
    and the Euler number of $M \to \Sig$ is a linear combination
    of $1/2$ and $1/p$ with integer coefficients.
    Hence, $e(M \to \Sig) = k/p$ for some integer $k$
    and $d k = 1$ implies that $d = k = 1.$

    \smallskip{}

    \textbf{Case $3.$} $\Sig$ is the $22p$ orbifold where $p$ is odd.

    As in the previous case,
    $e(\UTSig \to \Sig) = \chi(\Sig) = 1/p$
    and the Euler number of $M \to \Sig$ is a linear combination
    of $1/2$ and $1/p$ with integer coefficients.
    However, as $p$ is odd, we may only conclude that
    $e(M \to \Sig) = k/2p$ for some integer $k$
    and so $d k = 2.$
    As shown by \cref{prop:poscoverd},
    when quotienting along fibers to produce a $d$-fold
    cover, the integer $d$ must be coprime to the orders
    of all of the cone points.
    As $\Sig$ has cone points of order $2, d$ cannot be even
    and so $d = 1$ is the only possibility.

    \smallskip{}

    \textbf{Case $4.$} $\Sig$ is the $p \ti$ orbifold.

    Here $e(\UTSig \to \Sig) = \chi(\Sig) = 1/p$
    and $M$ has a Seifert invariant which may be written as
    \begin{math}
        (-1; (p, \bt_1))
    \end{math}
    for some value of $\bt_1.$
    Hence, $e(M \to \Sig) = k/p$ for some integer $k$
    and $d k = 1$ implies that $d = k = 1.$

    \smallskip{}

    \textbf{Case $5.$} $\Sig$ is the $pp$ orbifold where $p$ is even.

    Here $e(\UTSig \to \Sig) = \chi(\Sig) = 2/p$
    and $e(M \to \Sig) = k/p$ for some integer $k.$
    Hence, $d k = 2.$
    By \cref{prop:poscoverd},
    $d$ is coprime to $p$ and so must be odd.
    This is only possible if $d = 1.$

    \smallskip{}

    \textbf{Case $6.$} $\Sig$ is the $pp$ orbifold where $p$ is odd.

    As in the previous case,
    $d k = 2$ where $d$ is coprime to $p.$
    As $p$ is odd, both $d = 1$ and $d = 2$ are possible.
    Assume $d = 2$ for the rest of the proof.
    Case $(4)$ of \cref{thm:bigequiv} holds with $d = 2$ and $\al = \al_1 = \al_2 = p,$
    showing that
    \[
        d \cdot e(M \to \Sig)
        = - \tfrac{2}{\al} (\bt_1 + \bt_2)
        = \tfrac{2}{\al}
        = \chi(\Sig)
        \qandq
        2 \bt_i / \al \equiv -1 / \al \bmod \bbZ.
    \]
    As $\al$ is odd, it follows that
    \begin{math}
        {\bt_i} \equiv \tfrac{1}{2} (\al - 1) \bmod \al.
    \end{math}
    Using the allowed manipulations on Seifert invariants,
    we may adjust the ratios $\bt_i/\al_i$ by integer amounts
    and assume with loss of generality
    that
    \begin{math} 
        \bt_1 \ = \ \tfrac{\al}{2} \ - \ \tfrac{1}{2}.
    \end{math}        
    Then $\bt_1 + \bt_2 = -1$ implies that
    \begin{math}
        \bt_2 \ = \ -\tfrac{\al}{2} \ - \ \tfrac{1}{2}.
        \qedhere
    \end{math} \end{proof}
Proving \cref{thm:elliptic} is now a matter of combining the above results.

\begin{proof}
    [Proof of \cref{thm:elliptic}]
    By \cref{thm:bigequiv}, having a \hvf{} is equivalent to having
    a fiberwise covering $M \to \UTSig$ of some degree $d.$
    Up to changing the orientation of $M,$
    we may assume $d \ge 1.$
    Since we are assuming $M$ is not diffeomorphic to a lens space,
    propositions \ref{prop:notlens} and \ref{prop:ellcover}
    imply that $d = 1$ and so $M = \UTSig.$
\end{proof}
As a final observation,
note that for each $p > 1,$ the unit tangent bundle of the $22p$ orbifold
has two fiberings
\[
    M(0; (2,1), (2,-1), (p, -1)
    \qandq
    M(-1; (1, -p))
\]
given by item $(2)$ of \cref{prop:multifiber}.
The first of these fibering is the standard fibering of the unit tangent
bundle, and so has a \hvf{}.
The second is a fibering over the real projective plane
and since $p > \chi(\bbR P^2) = 1,$ this fibering does not have a \hvf{}.

\section{Parabolic orbifolds} \label{sec:para} 

We now consider Seifert fiberings over elliptic orbifolds.
There are exactly seven orbifolds of this type.

\begin{prop} \label{prop:parorbs}
    The parabolic orbifolds are as follows:

\begin{multicols}{2}
    \begin{enumerate}
        \item the 2-torus $\bbT^2;$
        \item
        the Klein bottle $K;$
        \item
        the $236$ orbifold;
        \item
        the $244$ orbifold;
        \item
        the $333$ orbifold;
        \item
        the $2222$ orbifold;
        \item
        the $22 \ti$ orbifold.
    \end{enumerate}
\end{multicols}
\end{prop}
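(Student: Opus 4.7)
My plan is to read the Euler characteristic formula of \cref{sec:orbifold} as a Diophantine equation in the cone-point orders, enumerate its solutions, and then discard any candidates that are bad orbifolds.

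First, writing $\chi(\Sig) = \chi(\Sig_0) - \sum_{i=1}^n (1 - 1/\al_i) = 0$ and using that each summand $1 - 1/\al_i$ lies in $[\tfrac{1}{2}, 1)$, the underlying topological surface $\Sig_0$ must satisfy $\chi(\Sig_0) \ge 0$. Hence $\Sig_0$ is one of $S^2$, $\RP^2$, $\bbT^2$, or the Klein bottle $K$, with Euler characteristics $2, 1, 0, 0$ respectively. This reduces the problem to four finite combinatorial cases.

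The case $\chi(\Sig_0) = 0$ immediately forces $n = 0$, giving the torus and Klein bottle themselves. For $\Sig_0 = \RP^2$ we need $\sum(1 - 1/\al_i) = 1$; the bound $1 - 1/\al_i \in [\tfrac{1}{2}, 1)$ forces $n = 2$, and the only integer solution is $\al_1 = \al_2 = 2$, yielding the $22\ti$ orbifold. For $\Sig_0 = S^2$ we need $\sum(1 - 1/\al_i) = 2$; the same bound gives $n \ge 3$ (since the sum is strictly less than $n$) and $n \le 4$ (since it is at least $n/2$). The case $n = 4$ forces all $\al_i = 2$, giving the $2222$ orbifold. The case $n = 3$ reduces to the classical equation $1/\al_1 + 1/\al_2 + 1/\al_3 = 1$ with $\al_i \ge 2$, whose only solutions up to reordering are $(2,3,6), (2,4,4), (3,3,3)$, yielding the $236$, $244$, and $333$ orbifolds.

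Finally, I would check that none of the seven candidates is bad. By the classification of bad orbifolds quoted just above the Euler characteristic formula, a bad orbifold is a sphere carrying exactly one cone point, or a sphere carrying two cone points of distinct orders; none of the enumerated candidates has either form, so each is good and hence parabolic. There is no genuine obstacle here: the substantive content is the $n = 3$ Diophantine enumeration on the sphere, which is completely routine.
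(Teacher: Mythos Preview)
Your argument is correct and is exactly the approach the paper itself suggests: the paper does not give a proof but cites \cite{sco1983geometries} and \cite{hat2007notes}, and remarks that one may ``prove the result directly by starting with a surface $\Sig_0$ with $\chi(\Sig_0) \ge 0$ and seeing which combinations of cone points may be added to get $\chi(\Sig)$ down to exactly zero.'' You have simply carried out that exercise, including the necessary check that none of the resulting orbifolds is bad.
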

For further details and a proof of this result,
see the section of \cite{sco1983geometries} entitled
``\emph{The geometry of $E^3$}''
and the references therein.
See also \S2.1 of \cite{hat2007notes}.
It is actually a somewhat enjoyable exercise to prove the result directly
by starting with a surface $\Sig_0$ with $\chi(\Sig_0) \ge 0$
and seeing which combinations of cone points may be added to get
$\chi(\Sig)$ down to exactly zero.

\subsection{Self-coverings}
For each of the parabolic orbifolds,
$\UTSig$ is an oriented Seifert fiber space
and \cref{cor:evchi} implies that $e(\UTSig \to \Sig) = \chi(\Sig) = 0.$
It turns out that, up to orientation,
these unit tangent bundles
are the only Seifert fiber spaces of this form.

\begin{prop} \label{prop:zerozero}
    There are exactly ten distinct Seifert fiberings $M \to \Sig$
    for which $M$ is oriented and $e(M \to \Sig) = \chi(\Sig) = 0.$
    These are all of the form $M = \pm \UTSig.$
    Moreover, if $\Sig$ is the $236$, $244$, or $333$ orbifold,
    then $\UTSig \ne -\UTSig.$
    For the other parabolic orbifolds, $\UTSig = -\UTSig.$
\end{prop}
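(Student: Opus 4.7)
The plan is to enumerate, orbifold by orbifold, all oriented Seifert fiberings with zero Euler number over each of the seven parabolic orbifolds listed in \cref{prop:parorbs}, and then match each such fibering with $\pm \UTSig$. A fibering over $\Sig$ with cone orders $\al_1, \ldots, \al_n$ has a Seifert invariant of the form $(g; (1, \bt_0), (\al_1, \bt_1), \ldots, (\al_n, \bt_n))$ with $\gcd(\al_i, \bt_i) = 1$ for $i \ge 1$. Under the equivalence of invariants, a shift $\bt_i \mapsto \bt_i + k_i \al_i$ with $\sum k_i = 0$ produces the same fibering, so the isomorphism class is determined by the residues $(\bt_i \bmod \al_i)_{i \ge 1}$ together with the Euler number $e = -\sum \bt_i/\al_i$. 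Requiring $e = 0$ with $\bt_0 \in \bbZ$ amounts to the condition $\sum_{i \ge 1} \bt_i/\al_i \in \bbZ$, so the enumeration reduces to listing tuples of units $(\bar\bt_i) \in \prod_{i \ge 1}(\bbZ/\al_i\bbZ)^{\times}$ satisfying this integrality condition.

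The enumeration itself is elementary. For $\bbT^2$ and the Klein bottle there are no cone points, and the unique fibering with $e = 0$ is $(g; (1, 0))$, which by \cref{prop:UTinvt} is $\UTSig$. For the $2222$ and $22\ti$ orbifolds, every $\al_i = 2$ forces $\bar\bt_i = 1$, so the only admissible tuple $(1, \ldots, 1)$ has integer sum and gives one fibering. For $236$, where the units modulo $2, 3, 6$ are $\{1\}, \{1,2\}, \{1,5\}$, one checks directly that only the tuples $(1, 1, 1)$ and $(1, 2, 5)$ produce an integer sum. Similarly $244$ yields the tuples $(1, 1, 1)$ and $(1, 3, 3)$, and $333$ yields $(1, 1, 1)$ and $(2, 2, 2)$ (up to reordering). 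In total this gives $1+1+2+2+2+1+1 = 10$ fiberings.

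Next I match each fibering against $\pm \UTSig$. By \cref{prop:UTinvt}, $\UTSig$ has residues $\bar\bt_i \equiv -1 \pmod{\al_i}$, while by \cref{prop:orientd}, $-\UTSig$ has residues $\bar\bt_i \equiv +1 \pmod{\al_i}$. In each orbifold above, the enumerated tuples correspond precisely to either the all-$(-1)$ or the all-$(+1)$ pattern (for example, in $236$ one has $-1 \equiv 1, 2, 5 \pmod{2,3,6}$ respectively), so every fibering in the list is of the form $\pm \UTSig$.

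Finally, $\UTSig$ and $-\UTSig$ agree as oriented Seifert fiberings if and only if the residue tuples $(-1, \ldots, -1)$ and $(+1, \ldots, +1)$ coincide modulo each $\al_i$, i.e., if and only if $\al_i \mid 2$ for all cone orders. This holds vacuously for $\bbT^2$ and $K$, and by $\al_i = 2$ for $2222$ and $22\ti$; it fails for $236, 244, 333$, each of which has a cone point of order at least $3$. The main point to verify in the equal case is that once the residues agree, the compensating shift in $\bt_0$ can be chosen so that $\sum k_i = 0$, confirming the equivalence of the two invariants under the Seifert moves; this is a short computation using the explicit invariants of $\UTSig$ from \cref{prop:UTinvt}.
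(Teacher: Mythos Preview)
Your argument is correct and follows essentially the same route as the paper: parametrize fiberings over each parabolic $\Sig$ by the residues $\bt_i \bmod \al_i$ together with the Euler number, impose $e=0$ as an integrality condition on $\sum \bt_i/\al_i$, and then identify the surviving tuples with the $\pm 1$ patterns coming from $\pm\UTSig$ via \cref{prop:UTinvt} and \cref{prop:orientd}. The paper only sketches the $236$ and $333$ cases and defers to references for the full list; your version carries out the enumeration explicitly and uniformly, which is a cleaner presentation of the same idea.
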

\begin{proof}
    A list of these Seifert invariants appears in a number of places.
    For instance, \S8.2 of \cite{orl1972seifert}
    and the sections of \cite{sco1983geometries} and \cite{hat2007notes}
    mentioned above.
    As the result is not difficult,
    we outline the proofs for two of the cases here and leave the others,
    which are similar, to the reader.

    \textbf{The $236$ orbifold.} \
    Consider a Seifert fibering $M \to \Sig$
    having invariant
    \[    
        (0; (2, \bt_1), (3, \bt_2), (6, \bt_3))
    \]
    and
    $e(M \to \Sig) = 0.$
    As $\bt_2$ is coprime to $3,$
    we have either $\bt_2 \equiv 1$ or $\bt_2 \equiv -1 \bmod 3.$
    By reversing the orientation of the manifold if necessary
    and using \cref{prop:orientd},
    we may assume the latter holds.
    By doing valid operations on the Seifert invariant,
    adding an integer to one ratio $\bt_i/\al_i$ and subtracting it to another
    we may reduce to the case where the invariant
    is ($0; (2, -1), (3, -1), (6, \bt_3).$
    Then $e(M \to \Sig) = 0$ implies that $\bt_3 = 5$ and one may verify
    $M = \UTSig.$

    \textbf{The $333$ orbifold.} \
    Consider a Seifert fibering $M \to \Sig$
    having invariant
    \[    
        (0; (3, \bt_1), (3, \bt_2), (3, \bt_3))
    \]
    and
    $e(M \to \Sig) = 0.$
    Each $\bt_i$ is coprime to $3.$
    One can show that $\bt_1 \equiv \bt_2 \equiv \bt_3 \bmod 3$
    as otherwise $e(M \to \Sig)$ would not be an integer.
    Applying \cref{prop:orientd} if necessary,
    we may assume $\bt_1 \equiv \bt_2 \equiv \bt_3 \equiv -1.$
    Similar to above,
    we may reduce to the case where the invariant
    is ($0; (3, -1), (3, -1), (3, \bt_3).$
    Then $e(M \to \Sig) = 0$ implies that $\bt_3 = 2$ and
    $M = \UTSig.$

    Using \cref{prop:orientd}, one can determine whether or not
    $\UTSig$ and $-\UTSig$ have the same Seifert invariants.
\end{proof}    

\begin{prop} \label{prop:paracover}
    Suppose $\Sig$ is an elliptic orbifold
    with a (possibly empty) set of cone points
    of orders $\al_1, \ldots, \al_n.$
    If $M \to \UTSig$ is a fiberwise cover, then $M = \pm \UTSig.$
    Moreover, such a fiberwise cover of degree $d$
    exists if and only if $d$ is coprime to all of the $\al_i.$
\end{prop}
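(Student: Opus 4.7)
The plan is to combine the Euler-number identity with the rigidity statement in \cref{prop:zerozero}. For the first assertion, suppose $u \colon M \to \UTSig$ is a fiberwise cover of degree $d$. Combining \cref{cor:eulerd} with \cref{cor:evchi} gives
\[
    d \cdot e(M \to \Sig) \ = \ e(\UTSig \to \Sig) \ = \ \chi(\Sig) \ = \ 0,
\]
so $e(M \to \Sig) = 0$. Since $M$ covers the oriented manifold $\UTSig$, $M$ itself is orientable, and \cref{prop:zerozero} applied to the fibering $M \to \Sig$ immediately yields $M = \pm \UTSig$.

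For the characterisation of admissible degrees, the necessity that $d$ be coprime to every $\al_i$ is immediate from \cref{prop:coverd}, which describes \emph{all} fiberwise coverings between oriented Seifert fiber spaces in exactly this form. For the converse, assume $d$ is a non-zero integer coprime to each $\al_i$. The goal is to construct a Seifert fibering whose $d$-fold fiberwise quotient, via \cref{prop:poscoverd,prop:orientd}, matches that of $\UTSig$ up to the standard equivalences of Seifert invariants. Because $d$ is coprime to $\al_i$, one may choose integers $\bt_i$ solving the congruence $d \bt_i \equiv -1 \pmod{\al_i}$ for each $i$; these choices supply the correct mod-$\al_i$ data at every exceptional fiber.

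The main technical step will be arranging $e(M \to \Sig) = 0$ while preserving these congruences. Since one may always insert a pair $(1, \gam)$ for any integer $\gam$ into the Seifert invariant, it suffices to verify that $s = \sum_{i=1}^{n} \bt_i/\al_i$ is itself an integer, for then $(g; (1, -s), (\al_1, \bt_1), \ldots, (\al_n, \bt_n))$ defines a Seifert fiber space $M$ of Euler number zero. To see $s \in \bbZ$, note that the denominator of $s$ in lowest terms divides $\operatorname{lcm}(\al_1, \ldots, \al_n)$ and is therefore coprime to $d$; on the other hand, summing the above congruences yields
\[
    d s \ \equiv \ -\sum_{i=1}^{n} \tfrac{1}{\al_i} \ = \ -\bigl(n - \chi(\Sig_0)\bigr) \pmod{\bbZ},
\]
where the second equality uses $\chi(\Sig) = 0$ in the parabolic case. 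The right-hand side is an integer, so $ds \in \bbZ$, and coprimality then forces $s \in \bbZ$ as well. \Cref{prop:zerozero} identifies the resulting $M$ as $\pm \UTSig$, and by construction its $d$-fold fiberwise quotient is $\UTSig$.
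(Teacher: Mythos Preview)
Your proof is correct. The first assertion and the necessity of the coprimality condition are argued exactly as in the paper.

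For the sufficiency direction, however, the paper takes a shorter route. Rather than building the covering space $M$ from scratch by solving congruences and then checking that $s = \sum \bt_i/\al_i$ is an integer, the paper simply starts from $\UTSig$ itself and applies \cref{prop:coverd} to produce a fiberwise quotient $\UTSig \to M_2$ of degree $d$. \Cref{cor:eulerd} then forces $e(M_2 \to \Sig) = 0$, and \cref{prop:zerozero} identifies $M_2$ as $\pm\UTSig$; reversing orientation if necessary yields a degree-$d$ cover $\pm\UTSig \to \UTSig$. This bypasses your integrality argument entirely. Your approach has the mild advantage of showing directly that the quotient lands on $\UTSig$ (not $-\UTSig$), so no orientation flip is needed at the end, but the paper's argument is noticeably shorter.
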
    
\begin{proof}
    First, suppose $M_1 \to \UTSig$ is a fiberwise covering.
    Then, \cref{cor:eulerd} implies that 
    $d \cdot e(M_1 \to \Sig) = \chi(\Sig) = 0$
    and so
    $e(M_1 \to \Sig) = 0.$
    As $M_1$ has the same base orbifold as $\UTSig,$
    \cref{prop:zerozero} applies and $M_1 = \pm \UTSig.$

    Conversely, starting from an integer $d$ coprime to all $\al_i,$
    \cref{prop:coverd} gives a fiberwise covering $\UTSig \to M_2.$
    \Cref{cor:eulerd} implies that
    $e(M_2 \to \Sig) = 0$
    and then \cref{prop:zerozero} implies that $M_2 = \pm \UTSig.$
    Hence, we have constructed a covering $\UTSig \to \pm \UTSig.$
    Up to a possible change of orientation,
    this gives a covering $\pm \UTSig \to \UTSig$ as desired.
\end{proof}
Using the above propositions, it is a straightforward task to list
out all of the possible coverings:
\begin{enumerate}
    \item If $\Sig = \bbT^2,$ then \\
    $\UTSig \cong -\UTSig$ and
    has a $d$-fold self-covering for all $d \ne 0.$
    \item
    If $\Sig = K$ is the Klein bottle, then \\
    $\UTSig \cong -\UTSig$ and
    has a $d$-fold self-covering for all $d \ne 0.$
    \item
    \covercase{236}{6}
    \item
    \covercase{244}{4}
    \item
    \covercase{333}{3}
    \item
    If $\Sig$ is the $2222$ orbifold, then \\
    $\UTSig \cong -\UTSig$ and
    $d$-fold covers itself if
    $d \equiv 1$ mod $2.$
    \item
    If $\Sig$ is the $22\ti$ orbifold, then \\
    $\UTSig \cong -\UTSig$ and
    $d$-fold covers itself if
    $d \equiv 1$ mod $2.$
\end{enumerate}
\medskip{}

We now attempt to impart some intuition
about the structure of these self-coverings and their associated \hvfs{}.
Every parabolic orbifold is the quotient of $\bbR^2$
by a wallpaper group
consisting of affine isometries.
Consider the unit tangent bundle $\UTbbR^2$
of the Euclidean plane.
An element of
\begin{math}
    \UTbbR^2 \cong \bbR^2 \ti \bbS^1
\end{math}
may be represented by a pair $(x, \theta)$ where
$x \in \bbR^2$ and $\theta$ is an angle.
Then $\UTbbR^2$ has self-covering maps of the form
\begin{math}
    (x, \theta) \mapsto (x, d \theta)
\end{math}
for $d \ge 1.$

If $A : \bbR^2 \to \bbR^2$ is an orientation-preserving isometry,
then its derivative is of the form
\begin{math}
    (x, \theta) \mapsto (A(x), \, \theta + \theta_0)
\end{math}
for some constant $\theta_0.$
In some cases, this will commute with the $d$-fold self-covering;
for instance, when $A$ is a rotation by an angle $\theta_0 = \frac{2\pi}{k}$
and $d \equiv 1$ mod $k.$
If the self-covering commutes with every element of the wallpaper group,
then this defines a self-covering map on unit tangent bundle of the orbifold.

As a concrete example, consider the wallpaper group associated
to the $236$ orbifold.
This group is generated by three rotations of orders $2, 3,$ and $6$
as depicted in \cref{fig:wallpaper}.
\begin{figure}
    \centering
    \includegraphics{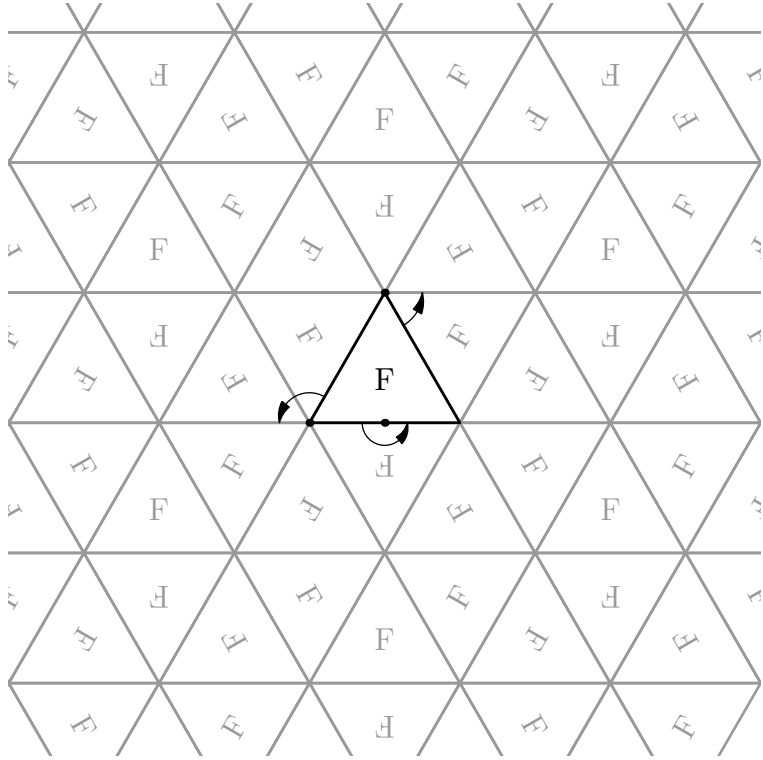}
    \caption[Wallpaper]{ Generators of the wallpaper group associated
    to the $236$ orbifold.}
    \label{fig:wallpaper}
\end{figure}
If $d \equiv 1 \bmod 6,$ the self-covering on $\UT \bbR^2$ will quotient
to the unit tangent bundle $\UTSig.$

\subsection{Vector fields on surfaces}
To completely handle Seifert fiberings over parabolic orbifolds,
we must also consider case $(1)$ of \cref{prop:seicover}.

\begin{prop} \label{prop:factor}
    Let $\Sig$ be the 2-torus or Klein bottle.
    Then any Seifert fibering $M \to \Sig$
    has a \hvf{} (whether or not $M$ is orientable).
\end{prop}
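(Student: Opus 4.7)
The plan is to invoke \cref{prop:hvfmap} and reduce the problem to constructing a continuous fiber-preserving map $u : M \to \UTSig$ covering the identity on $\Sig$. Given such a $u$, a horizontal vector field on $M$ exists, and we need no further information about the structure of the fibering, which is why this case stands apart from the other geometric types handled in the paper.

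The map $u$ will be produced by factoring it through $\Sig$ itself; that is, we place ourselves in case $(1)$ of \cref{prop:seicover}. Since $\Sig$ is either the 2-torus or the Klein bottle, it is a smooth surface with $\chi(\Sig) = 0$, so by the Poincar\'e-Hopf theorem $\Sig$ admits a continuous nowhere-zero vector field $w : \Sig \to T\Sig$. Fixing any Riemannian metric on $\Sig$, normalization gives a continuous section $s : \Sig \to \UTSig$ of the projection $\UTSig \to \Sig$. Letting $\pi : M \to \Sig$ denote the Seifert projection, we then set
\[
    u := s \circ \pi : M \to \UTSig.
\]
This $u$ is continuous, and by construction the composition $\UTSig \to \Sig$ applied to $u$ recovers $\pi$, so the required square commutes. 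In particular, $u$ is fiber-preserving: it is constant along each fiber of $M$ and sends that fiber to the single point $s(\pi(x)) \in \UTSig$.

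Applying \cref{prop:hvfmap} to $u$ produces a horizontal vector field on $M$. Note that no step above uses orientability of $M$ or of the fibering: the key input from \cref{prop:hvfmap} is only the existence of a transverse plane field on $M$, which exists on any Seifert fibering by taking orthogonal complements in a Riemannian metric. So the conclusion holds in the non-orientable case as well. There is no substantial obstacle here; the work was already done in \cref{prop:hvfmap,prop:orbvfield}, and the content of the proof is just recognizing that when $\Sig$ carries a non-vanishing vector field one may bypass the entire covering-theoretic analysis used for the other geometries.
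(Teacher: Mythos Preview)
Your proof is correct and follows essentially the same approach as the paper: construct a unit vector field $s:\Sig\to\UTSig$ (which exists because $\chi(\Sig)=0$), compose with the projection $\pi:M\to\Sig$ to obtain $u=s\circ\pi$, and invoke \cref{prop:hvfmap}. Your write-up is in fact somewhat more explicit than the paper's, spelling out the normalization step and the reason orientability of $M$ is irrelevant.
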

\begin{proof}
    On such a surface, one can construct
    a unit vector field $v : \Sig \to \UTSig.$
    Define $u : \Sig \to \UTSig$ as the composition of $v$ with
    the projection $\pi : M \to \Sig$ defining the Seifert fibering.
    Then, a \hvf{} exists by \cref{prop:hvfmap}.
\end{proof}
For each of $\bbT^2$ and $K,$
there are infinitely many oriented Seifert fiberings,
one for each integer $e(M \to \Sig).$
These manifolds have Euclidean geometry if $e(M \to \Sig) = 0$
and Nil geometry otherwise
\cite{sco1983geometries}.
There are exactly four non-orientable 3-manifolds with Euclidean
geometry.
(See \S8.2 of \cite{orl1972seifert}.)
Each of these has a Seifert fibering over $K$
and therefore has a \hvf{}.
Two of these manifolds also have Seifert fiberings over $\bbT^2$
which therefore also have \hvfs{}.


\begin{proof}
    [Proof of \cref{thm:parabolic}]
    If $\Sig$ is a surface with $\chi(\Sig) = 0,$
    then \cref{prop:factor} establishes the theorem,
    so suppose $\Sig$ is not a surface.
    \Cref{thm:bigequiv} and \cref{prop:paracover} then
    imply that
    \[  \text{$M$ has a \hvf{}}
        \  \Leftrightarrow \  
        \text{$M$ fiberwise covers $\UTSig$}
        \  \Leftrightarrow \  
        M = \pm \UTSig.
        \qedhere
    \] \end{proof}

\section{Hyperbolic Orbifolds} \label{sec:hyp} 

The final class of orbifolds to consider are those with hyperbolic geometry.
Even though this is the most general case,
there is the least to say here.
In this setting,
it is relatively easy to find fiberwise coverings $M \to \UTSig$ of degree $d > 1.$
For circle bundles,
if $\Sig$ is a surface of genus $g \ge 2,$ then there is a cover of degree $d$
for every factor $d$ of $\chi(g) = 2 - 2g.$
Even for orbifolds without handles, non-trivial covers are possible.
For instance, using \cref{thm:bigequiv}, one may show that
\[
    M(0; (1,-1), (5,2), (5,2), (5,2))
\]
double covers the unit tangent bundle of the $555$ orbifold.

For certain choices of cone points,
there may be no non-trivial covers.
For instance, if $\Sig$ is the $237$ orbifold,
one may show that only $d = \pm 1$ is possible.
The proof is of the same form as Case $1$ of the proof of \cref{prop:ellcover}.


If $\Sig$ is a hyperbolic orbifold,
the geodesic flow on $\UTSig$ is an Anosov flow
and the flow is generated by a vector field which is horizontal.
This flow lifts to any finite cover and is still Anosov on the cover.
Ghys and Barbot showed that (up to orbit equivalency)
every Anosov flow on a Seifert fiber space is of this form
\cite{ghy1984flots,bar1996flots}.
This establishes 
the equivalence $(2)$ $\Leftrightarrow$ $(3)$ in \cref{thm:hyperfull}.
The equivalence $(1)$ $\Leftrightarrow$ $(2)$
is a re-statement of the results in \cref{sec:general}.

%
%

\section{Homotopies of \hvfs{}} \label{sec:homotopy} 

For a given Seifert fibering $M \to \Sig,$
consider the space of all \hvfs{}.
What are the connected components of this space?
That is, when can one vector field be deformed into another
along a path of \hvfs{}?
For simplicity, we only consider this in the case where
the base orbifold $\Sig$ is oriented.
By \cref{prop:hvfmap}, the question reduces
to studying homotopy classes of maps of the form
\[
    \simpleCD{M}{u}{\UTSig}{}{}{}{\Sig}{\id}{\Sig.}
\]
\smallskip{}

Consider two such maps $u, v : M \to \UTSig$
and suppose they have the same degree $d.$
(Otherwise, they are clearly not homotopic.)
By the averaging method used in the proof of \cref{prop:seicover},
we may assume there are metrics on the fibers of 
$M$ and $\UTSig$ and that $u$ and $v$ have the same constant speed on all fibers.
As $\Sig$ is oriented, the fibers of $\UTSig$ are oriented.
Using the metric, there is then a well-defined difference map
\begin{math}
    u - v : M \to \bbS^1
\end{math}
measuring the angle between vectors.
This map is constant on fibers
and therefore quotients to a map $g : \Sig \to \bbS^1.$
The map $u$ is homotopic to $v$
if and only if
$g$ is homotopic to a constant map.
Since we are only concerned with the homotopy class of $g,$
the smooth orbifold structure of $\Sig$ is unimportant
and we may consider $g$ as a map from $\Sig_0$ to $\bbS^1$
where $\Sig_0$ is the underlying topological surface.

Note that these steps are reversible.
Given a continuous map $g : \Sig_0 \to \bbS^1,$
we may, up to homotopy, assume $g$ is smooth everywhere
and constant on a neighbourhood of the cone points.
If $\pi : M \to \Sig$ is the Seifert fibering,
then $g \circ \pi : M \to \bbS^1$ is a smooth map.
Given $u$ as above, we may construct $v$ by $v = u + g \circ \pi$
where the plus sign denotes addition of angles.
If we assume $u$ is fixed, then this gives a bijection
between homotopy classes of maps $v : M \to \UTSig$
of the same degree as $u$ and homotopy classes
of maps $g : \Sig_0 \to \bbS^1.$

There are canonical isomorphisms identifying
\begin{enumerate}
    \item the homotopy classes of maps from $\Sig_0$ to $\bbS^1,$
    \item
    homomorphisms from $\pi_1(\Sig_0)$ to $\bbZ = \pi_1(\bbS^1),$
    \item
    homomorphisms from the first homology group $H_1(\Sig_0)$ to $\bbZ,$ and
    \item
    elements of the first cohomology group $H^1(\Sig_0,\bbZ).$
\end{enumerate}
For $\Sig_0 = \bbS^2,$ this follows because all of the above are trivial.
For other oriented surfaces, it follows because both $\Sig_0$ and $\bbS^1$
are $K(\pi,1).$
(See also Theorem 4.57 and the discussion on page $198$ of \cite{hat2002algebraic}.)

Each connected component of the space of \hvfs{}
is therefore uniquely determined by the degree $d$
(which is also the number of turns that the vector field
makes around any fiber)
and an element of $H^1(\Sig_0,\bbZ).$

For a Seifert fibering $M \to \Sig$ of an oriented manifold,
let $\DMSig$ denote the ``allowable degrees''
of a \hvf{}.
That is $d \in \DMSig$ if and only if
there is a degree-$d$ map $u : M \to \UTSig$ of the form given
in \cref{prop:hvfmap}.
The above reasoning may then be used to show the following.

\begin{thm} \label{thm:hvfcomps}
    For a Seifert fibering $M \to \Sig$ over an oriented orbifold,
    there is a bijection between the connected components
    of the space of \hvfs{} and pairs of the form
    \begin{math}
        (d, \varphi) \in \DMSig \ti H^1(\Sig_0,\bbZ).
    \end{math} \end{thm}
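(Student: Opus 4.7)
The plan is to verify the outline given in the discussion preceding the theorem, checking carefully that each step produces genuine bijections on homotopy classes. By \cref{prop:hvfmap}, assigning to a \hvf{} $v$ the normalized projection $x \mapsto T\pi(v(x))/\|T\pi(v(x))\|$ gives a continuous map from the space of \hvfs{} to the space of continuous fiber-preserving sections $u : M \to \UTSig$ covering the identity on $\Sig$. Because the inverse assignment constructed in \cref{lemma:lifttohvf} uses only a fixed choice of transverse plane field, this correspondence induces a bijection on connected components, reducing the problem to counting homotopy classes of such maps $u$.

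The degree $d$ of $u$, which equals the degree of its restriction to any fiber, is locally constant in the space of maps under consideration, and by definition takes values in $\DMSig$. So the set of components decomposes as a disjoint union indexed by $d \in \DMSig$, and it remains to identify the components of the degree-$d$ subspace with $H^1(\Sig_0, \bbZ)$. To do this, I would fix a reference map $u_0 : M \to \UTSig$ of degree $d$, apply the fiberwise averaging procedure from the proof of \cref{prop:seicover} to homotope both $u_0$ and an arbitrary degree-$d$ map $v$ to constant-speed degree-$d$ coverings on every fiber, and then use the orientation of $\Sig$ (which induces a consistent orientation on the fibers of $\UTSig$) to define a continuous signed angular difference $M \to \bbS^1$. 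Since after averaging $u_0$ and $v$ traverse the same target fiber at the same constant speed, this difference is constant along each fiber of $M$ and descends to a continuous map $g : \Sig \to \bbS^1$. As $\bbS^1$ is a smooth manifold and any map from $D_a$ into $\bbS^1$ is homotopic to one that is constant near the cone point, $g$ is homotopic to a map that factors through $\Sig_0$, and passing to cohomology via $[\Sig_0, \bbS^1] \cong H^1(\Sig_0, \bbZ)$ produces the element $\varphi$.

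To see that the pair $(d, \varphi)$ is a complete invariant, I would exhibit an inverse: given $g : \Sig_0 \to \bbS^1$, the map $v = u_0 + g \circ \pi$ (adding angles fiberwise in $\UTSig$) is a continuous fiber-preserving map of the same degree as $u_0$. A null-homotopy of $g$ immediately gives a fiber-preserving homotopy from $v$ to $u_0$; conversely, a fiber-preserving homotopy from $u_0$ to $v$ yields a continuous family of fiberwise differences that, at the level of $\Sig_0$, is a homotopy from $g$ to a constant. Combined with the identification $[\Sig_0, \bbS^1] \cong H^1(\Sig_0, \bbZ)$ recalled in the enumeration above the theorem, this establishes the asserted bijection.

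The main obstacle is the averaging step: one must confirm that the construction in the proof of \cref{prop:seicover} can be carried out simultaneously on the two maps $u_0$ and $v$, staying within fiber-preserving maps covering the identity and preserving degree, so that after the homotopy the fiberwise signed angular difference is actually well defined as a continuous map to $\bbS^1$. Once this is set up, the rest of the argument is a direct verification using standard obstruction theory for maps into the Eilenberg--MacLane space $K(\bbZ,1) = \bbS^1$.
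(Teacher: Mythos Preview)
Your proposal is correct and takes essentially the same approach as the paper, whose proof is precisely the discussion preceding the theorem statement that you are fleshing out. You have added some extra care about why the correspondence between \hvfs{} and fiber-preserving maps induces a bijection on connected components and about the averaging step, but the substance and structure are identical to the paper's argument.
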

    
If $\chi(\Sig) \ne 0,$ then \cref{thm:bigequiv} shows that
the degree $d$ is uniquely determined by the fibering.
In particular,
for a bad, elliptic, or hyperbolic orbifold with $\Sig_0 = \bbS^2,$
\cref{thm:hvfcomps} implies that
the Seifert fibering uniquely determines the \hvf{} up to homotopy.
Conversely if $H^1(\Sig_0,\bbZ)$ is non-trivial,
then many homotopy classes are possible.
For instance,
consider the 3-torus $\bbT^3$ with fibers tangent to
the vertical $z$-direction.
Then $U T \bbT^2$ may be identified with $\bbT^3$ and
the classes of \hvfs{} correspond to
maps of the form
\[
    \bbT^3 \to \bbT^3, \quad (x,y,z) \mapsto (x,y, a x + b y + d z)
\]
for all $(a,b,d) \in \bbZ^3.$

%
%

\section{Manifolds with boundary} \label{sec:boundary} 

In this final section, we consider Seifert fiberings on
manifolds with boundary.
First, consider the case where there are boundary conditions
for the vector field.
We could require that the vector field be either tangent or transverse
to the boundary and the resulting restrictions on $M$ will be the same.

\begin{thm} \label{thm:tanbound}
    For a Seifert fibering $M \to \Sig$ on a manifold with boundary,
    the following are equivalent:
    \begin{enumerate}
        \item $M$ has a \hvf{} everywhere tangent to the boundary,
        \item
        $M$ has a \hvf{} everywhere transverse to the boundary,
        \item
        the base orbifold $\Sig$ is either the annulus or the M\"obius band.
    \end{enumerate} \end{thm}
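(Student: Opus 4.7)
The plan is to adapt the fiber-preserving map framework of \cref{prop:hvfmap} and \cref{prop:seicover} to the bounded category, and then reduce the question to Poincar\'e--Hopf on the base surface.

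For the easy direction $(3) \Rightarrow (1)$ (and analogously $(3) \Rightarrow (2)$), I would start from a non-vanishing vector field $w$ on $\Sig$ that is everywhere tangent (respectively transverse) to $\del \Sig$; such $w$ exists because $\chi(\Sig) = 0$ for the annulus and the M\"obius band. Mimicking \cref{prop:factor}, I would let $\hat w : \Sig \to \UTSig$ be the normalized section and set $u = \hat w \circ \pi$, where $\pi : M \to \Sig$ is the Seifert projection. Applying the construction of \cref{lemma:lifttohvf} with a transverse plane field $P$ chosen so that $P|_{\del M} \subof T(\del M)$ (such a $P$ exists because on each boundary torus one can pick any line transverse to the circle fibers) should produce a \hvf{} $v$ on $M$ whose boundary behavior matches that of $w$.

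For the converse directions, given a \hvf{} $v$ on $M$ satisfying either boundary condition, I would form $u : M \to \UTSig$ via \cref{prop:hvfmap} and analyze $u|_{\del M}$. The boundary condition on $v$ forces the image $u(\del M)$ to lie in the locus of $\UTSig|_{\del \Sig}$ consisting of unit vectors tangent (respectively transverse) to $\del \Sig$. In the tangent case this locus meets each fiber of $\UTSig|_{\del \Sig}$ in two points, so $u$ is constant on each circle fiber of $\del M$ by connectedness; in the transverse case each fiber meets this locus in two open contractible arcs, so $u$ restricted to each such circle fiber is null-homotopic in the corresponding fiber of $\UTSig$. In either case the degree of $u$ on fibers is zero. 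I would then run the averaging argument from the proof of \cref{prop:seicover} to homotope $u$ to a composition $M \to \Sig \to \UTSig$, giving a non-vanishing section $s : \Sig \to \UTSig$. The orbifold-with-boundary analogue of \cref{prop:orbvfield} forces $\Sig$ to be a surface with no cone points. Tracking the boundary condition through the homotopy then provides a non-vanishing vector field on $\Sig$ tangent (respectively transverse) to $\del \Sig$, and doubling $\Sig$ along $\del \Sig$ yields a closed surface $D(\Sig)$ with $\chi(D(\Sig)) = 2 \chi(\Sig)$ supporting a non-vanishing vector field. Poincar\'e--Hopf then forces $\chi(\Sig) = 0$, which among compact surfaces with non-empty boundary leaves exactly the annulus and the M\"obius band.

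The main obstacle will be verifying that \cref{prop:hvfmap}, \cref{lemma:lifttohvf}, and the averaging step from the proof of \cref{prop:seicover} all extend to the bounded setting, since these were stated for closed $M$. A secondary subtlety appears in the transverse case: the boundary constraint on $u$ is open rather than discrete, so one obtains only that $u|_{\del M}$ is homotopic (through maps satisfying the transverse condition) to a fiber-collapsing map, and some care is needed to preserve transversality of the resulting section on $\del \Sig$ under this homotopy.
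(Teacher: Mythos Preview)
Your approach is essentially the paper's: form $u:M\to\UTSig$, use the boundary condition to force degree zero on fibers, invoke the averaging step of \cref{prop:seicover} to factor through a section $\Sig\to\UTSig$, then use the cone-point argument of \cref{prop:orbvfield} together with Poincar\'e--Hopf. The paper applies Poincar\'e--Hopf directly to the transverse vector field on $\Sig$ rather than doubling, but the content is the same.

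One correction to your $(3)\Rightarrow(1)$ step: you cannot choose a plane field $P$ transverse to the fibers with $P|_{\del M}\subof T(\del M)$. The boundary tori are unions of fibers, so the fiber direction lies in $T_x(\del M)$ for $x\in\del M$; a $2$-plane $P(x)\subof T_xM$ transverse to the fiber therefore can never equal the $2$-plane $T_x(\del M)$. Fortunately this condition is unnecessary. For $x\in\del M$ one has $(T_x\pi)^{-1}\bigl(T_{\pi(x)}(\del\Sig)\bigr)=T_x(\del M)$, so any lift $v$ with $T\pi(v(x))=\hat w(\pi(x))$ tangent to $\del\Sig$ is automatically tangent to $\del M$, independent of the choice of $P$. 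The transverse case is similarly automatic.
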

\begin{proof}
    We show $(2)$ $\Leftrightarrow$ $(3).$
    The proof of $(1)$ $\Leftrightarrow$ $(3)$
    is similar and left to the reader.
    Note that \cref{prop:seicover} holds
    for manifolds with boundary using the same proof.
    Suppose $M$ supports a \hvf{} which is everywhere transverse
    to $\del M$ and consider the associated map $u : M \to \UTSig.$
    For a point $x \in \del \Sig,$
    the restriction of $u$ to the fiber over $x$
    cannot be surjective as its range omits
    the two unit vectors at $x$ which are tangent to $\del \Sig.$
    This implies that $u$ has degree zero
    and \cref{prop:seicover} shows that $u$ is homotopic
    to a composition $M \to \Sig \to \UTSig.$
    In the proof of the proposition,
    the vector field $\Sig \to \UTSig$ is
    defined by averaging
    and one can verify that is it transverse to $\del \Sig.$
    Further, the same argument as in \cref{prop:orbvfield}
    shows that $\Sig$ has no cone points and is therefore
    a surface with boundary.
    The Poincar\'e--Hopf theorem then implies that
    $\Sig$ is either the annulus or M\"obius band.

    Conversely, for any circle bundle
    over an annulus or M\"obius band,
    we may compose the projection $M \to \Sig$
    with a vector field $\Sig \to \UTSig$ transverse to $\del \Sig$
    and produce a \hvf{} on $M$ transverse to $\del M.$
\end{proof}
For the remainder of the section,
we consider \hvfs{} with no boundary conditions.

\begin{thm} \label{thm:nobound}
    Let $M \to \Sig$ be a Seifert fibering on a manifold with boundary.
    Then $M$ has a \hvf{} if and only if
    one or both of the following hold:
    \begin{enumerate}
        \item the base orbifold $\Sig$ is a surface with boundary, or
        \item
        $M$ fiberwise covers the unit tangent bundle of $\Sig.$
    \end{enumerate} \end{thm}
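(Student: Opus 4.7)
The plan is to follow the same strategy as in the closed case, adapting the chain of Propositions \ref{prop:hvfmap}, \ref{prop:seicover}, and \ref{prop:orbvfield} to the boundary setting, and noting that the fibers of a Seifert fibering remain circles even when $M$ has boundary.

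For the ``if'' direction, I would handle the two cases separately using \cref{prop:hvfmap}. If $\Sig$ is a surface with boundary, then $\Sig$ is homotopy equivalent to a graph and admits a nowhere-zero vector field $s$, which is the same as a continuous section $s : \Sig \to \UTSig$. Composing with the fibering projection $\pi : M \to \Sig$ yields a fiber-preserving map $u = s \circ \pi : M \to \UTSig$, and \cref{prop:hvfmap} produces the desired \hvf{}. If instead $M$ fiberwise covers $\UTSig$, then the covering map itself is the map $u$ needed to invoke \cref{prop:hvfmap}.

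For the ``only if'' direction, suppose $M$ admits a \hvf{}. By \cref{prop:hvfmap} there is a continuous fiber-preserving $u : M \to \UTSig$. The averaging argument in the proof of \cref{prop:seicover} uses only that fibers are circles and that $u$ projects to $\id_{\Sig}$; neither depends on $M$ being closed, so the dichotomy still holds: either $u$ is homotopic through fiber-preserving maps to a composition $M \to \Sig \to \UTSig$, or it is homotopic to a fiberwise covering $M \to \UTSig$. In the second case we are done. In the first case, the intermediate map $\Sig \to \UTSig$ is a section of $\UTSig \to \Sig$, i.e.\ a nowhere-zero vector field on $\Sig$. The local argument of \cref{prop:orbvfield} (zero at cone points) is purely local and applies verbatim when $\Sig$ has boundary, so $\Sig$ must have no cone points and is an honest surface with boundary.

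The mildly subtle point, and the one I would want to double-check carefully, is that \cref{prop:seicover} genuinely carries over to the bounded setting. The averaging formula $\hat u(t) = \int_{t-1/2}^{t+1/2} \tilde u(s)\,ds$ and the subsequent fiberwise homotopy are constructed one fiber at a time, and each fiber is either a regular or exceptional circle with no boundary of its own (the boundary of $M$ is a union of whole fibers). So the construction and the degree computation go through without modification, and the resulting homotopy is through fiber-preserving maps; in particular the boundary of $M$ is preserved throughout. With this verified, the equivalence follows exactly as above, completing the proof.
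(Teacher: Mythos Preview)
Your proposal is correct and follows essentially the same approach as the paper: the paper states \cref{lemma:keynobound} (an orbifold with boundary admits a nowhere-zero vector field iff it has no cone points) and then simply says that \cref{thm:nobound} follows by checking that the results of \cref{sec:general} extend to the boundary case, which is exactly what you spell out in detail. Your explicit verification that the averaging argument in \cref{prop:seicover} works fiber-by-fiber and is unaffected by the presence of boundary tori is precisely the content the paper leaves to the reader.
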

To prove this, we use the following.

\begin{lemma} \label{lemma:keynobound}
    An orbifold with boundary supports a non-zero vector field
    if and only if it has no cone points
    (i.e., it is a surface with boundary).
\end{lemma}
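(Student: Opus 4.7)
The ``only if'' direction is a purely local argument, identical to that of \cref{prop:orbvfield}. At a cone point $x \in \Sig$ of order $a > 1$, a vector field on a chart neighbourhood of $x$ lifts via the quotient $D^2 \to D_a$ to an $r_a$-invariant vector field on $D^2$. The center $0 \in D^2$ is fixed by $r_a$, and $T_0 r_a$ is a non-trivial rotation of $T_0 D^2$, so the invariance condition $T r_a(v(0)) = v(0)$ forces $v(0) = 0$. Hence any vector field on $\Sig$ must vanish at every cone point, so a nowhere-zero vector field requires $\Sig$ to have no cone points, meaning $\Sig$ is a surface with boundary. This argument is local and so goes through unchanged whether or not $\Sig$ has boundary.

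For the ``if'' direction, suppose $\Sig$ has no cone points, so $\Sig$ is a surface with non-empty boundary. The plan is to exhibit a nowhere-zero section of $T\Sig$. First I would use the standard fact that a compact surface with non-empty boundary is homotopy equivalent to a one-dimensional CW complex $G \subof \Sig$ (a finite graph; namely the spine of $\Sig$), with an explicit deformation retraction $r : \Sig \to G$. Next I would construct a nowhere-zero section of the restricted bundle $T\Sig|_G$ cell by cell: pick any nonzero vector in the fiber over each vertex of $G$, then extend over each edge using that the space of nowhere-vanishing sections of an $\bbR^2$-bundle over a closed interval deformation retracts onto $\bbR^2 \sans \{0\}$ and is in particular path connected. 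Finally I would push this section forward to $\Sig$ using that $T\Sig$ is isomorphic to $r^{\ast}(T\Sig|_G)$ as bundles over $\Sig$ (since $r$ is a homotopy equivalence and vector bundles are homotopy invariant under pullback); the pulled-back section is nowhere-zero because bundle isomorphisms are fiberwise linear isomorphisms.

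The main technical point is the existence of the deformation retraction $\Sig \to G$. For orientable $\Sig$ of genus $g$ with $n \ge 1$ boundary components, $\pi_1(\Sig)$ is free of rank $2g + n - 1$, and $\Sig$ is a $K(\pi,1)$, hence homotopy equivalent to a wedge of circles; analogous statements hold in the non-orientable case. One can then exhibit an explicit spine from a handle decomposition or triangulation. A more hands-on alternative that avoids citing the classification of surfaces is to start with any vector field on $\Sig$ with finitely many isolated zeros in the interior (for example, the gradient of a generic Morse function) and to eliminate the zeros one at a time: for each zero $p$, choose a smooth embedded arc from $p$ to $\del \Sig$, disjoint from the other zeros, and modify the vector field in a tubular neighbourhood of this arc so as to ``push the zero off the boundary.'' Either approach yields the desired non-zero vector field on $\Sig$, completing the proof.
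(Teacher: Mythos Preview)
Your ``only if'' direction matches the paper's exactly. For the ``if'' direction your argument is correct but takes a different route. The paper's approach is more direct: cap off the boundary components of $\Sig$ with disks to obtain a closed surface, choose a generic vector field on that closed surface (which has only finitely many zeros, citing Peixoto), and then excise small disks around the zeros to recover a surface with boundary diffeomorphic to $\Sig$ carrying a nowhere-zero vector field. Your spine/obstruction-theory argument is more conceptual---it makes transparent that the only obstruction to a nowhere-zero section lives in $H^2$, which vanishes because $\Sig$ retracts onto a graph---whereas the paper's argument is more elementary and avoids invoking homotopy invariance of vector bundles. Your second alternative, pushing isolated zeros off the boundary along arcs, is close in spirit to the paper's trick, just with the roles of the given bounded surface and the ambient closed surface reversed.
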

\begin{proof}
    As shown in the proof of \cref{prop:orbvfield},
    it is impossible to define a non-zero vector field in the neighbourhood
    of a cone point
    and so the existence of such a vector field implies that $\Sig$ is a surface with boundary.
    Conversely,
    if we have no boundary conditions,
    we may construct a non-zero vector field on any surface with boundary.
    For instance,
    we may take a generic vector field on a closed surface.
    This is zero at finitely many points \cite{pei1962structural},
    and we can excise one or more topological disks to remove all of these
    points.
\end{proof}
Using this lemma and checking that the results of \cref{sec:general}
extend to the case of a manifold with boundary,
one may then prove \cref{thm:nobound}.

For an oriented manifold with boundary, write the Seifert invariant as
\[
    (g, n; (\al_1, \bt_1), \ldots, (\al_k, \bt_k))
\]
where $n > 0$ is the number of boundary components.
We can reorder the pairs, add or remove pairs of the form $(1,0)$,
and (specifically for $\del M \ne \varnothing$) add an integer to
any of the ratios $\bt_i/\al_i$ without changing the fibering.
Note here that integer changes to one ratio $\bt_i/\al_i$ do
not need to be offset with a change to another ratio  $\bt_j/\al_j.$
Because of this, the Euler number $e(M \to \Sig)$ is not well defined.
See \S2 of \cite{hat2007notes} for more details.

By adapting arguments in \cref{sec:general}
one can prove the following analogue of \cref{thm:bigequiv}.
Note that the condition
\begin{math}
    d \cdot e(M \to \Sig) = \chi(\Sig)
\end{math}
has been removed.

\begin{prop} \label{prop:boundaryequiv}
    Let $M$ be a manifold with boundary 
    and let $M \to \Sig$ be a Seifert fibering
    such that $\Sig$ has one or more cone points.
    Then the following are equivalent:
    \begin{enumerate}
        \item $M$ has a \hvf{};
        \item
        there is a continuous map $u: M \to \UTSig$
        such that
        \[
            \simpleCD{M}{u}{\UTSig}{}{}{}{\Sig}{\id}{\Sig}
        \]
        commutes;
        \item
        there is a fiberwise covering from $M$ to
        the unit tangent bundle $\UTSig;$
        \item
        $M$ is orientable, the fibering has Seifert invariant
        \[
            (g, n; (\al_1, \bt_1), \ldots, (\al_k, \bt_k))
        \]
        and there is a non-zero integer $d$
        such that
        \[        
            d \bt_i/\al_i \equiv -1/\al_i    \mod \bbZ
        \]
        for all $i = 1, \ldots, k.$
    \end{enumerate} \end{prop}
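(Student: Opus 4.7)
The plan is to mirror the proof of \cref{thm:bigequiv}, replacing each ingredient by its manifold-with-boundary analogue and using \cref{lemma:keynobound} in place of \cref{prop:orbvfield}. The equivalence $(1) \Leftrightarrow (2)$ is obtained by repeating the proof of \cref{prop:hvfmap} verbatim; the only non-trivial content there was the local construction on standard fibered tori in \cref{lemma:localunittan} and \cref{lemma:lifttohvf}, neither of which cares whether $M$ has boundary. The implication $(3) \Rightarrow (2)$ is immediate from the definition of a fiberwise covering.

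For $(2) \Rightarrow (3)$ I would adapt \cref{prop:seicover}, whose proof (averaging a lift $\tilde u$ over a fiber-length interval and producing a homotopy through fiber-preserving maps) is local along the fibers and therefore carries over to manifolds with boundary without modification. This dichotomises $u$ up to homotopy into either a fiberwise covering or a composition $M \to \Sig \to \UTSig$. The second case requires a continuous section of $\UTSig \to \Sig$, equivalently a non-zero vector field on $\Sig$; since by hypothesis $\Sig$ has at least one cone point, \cref{lemma:keynobound} rules this out, leaving only the fiberwise covering.

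For $(3) \Leftrightarrow (4)$ I would first prove a boundary analogue of \cref{prop:UTinvt} showing that $\UTSig$ has a Seifert invariant whose cone-point pairs are all of the form $(\al_i, -1)$. The proof is essentially the same as in the closed case: equip $\Sig_0$ (which, having boundary, does admit non-zero vector fields by \cref{lemma:keynobound}) with a vector field that has a sink or source at each cone point and no other critical points, then compute the gluing data on the neighbourhood of each exceptional fiber by the same north-pole analysis used in \cref{prop:UTinvt}. Combined with a boundary analogue of \cref{prop:coverd}, whose proof from the closed case passes through verbatim (the quotient $Q_d$ and the computations of $m_i$ and $\ell_i$ are purely local on a neighbourhood of each exceptional fiber), this yields the desired characterisation. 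The essential simplification in the boundary case is that integer shifts of each $\bt_i/\al_i$ are now independent moves on the Seifert invariant, so the Euler-number equation $d \cdot e(M \to \Sig) = \chi(\Sig)$ from \cref{thm:bigequiv} becomes vacuous and is absent from the statement.

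The main obstacle is pinning down the boundary analogue of \cref{prop:UTinvt}: one must verify that, in the absence of the closing-up relation that previously fixed the $(1, n - \chi(\Sig_0))$ term, the local computation at each cone point still produces exactly $\bt_i = -1$ modulo the now-available independent integer shift, and that the congruence in $(4)$ is invariant under precisely these shifts. Once this local calculation is nailed down, the remainder of the argument is bookkeeping essentially identical to that of \cref{thm:bigequiv}.
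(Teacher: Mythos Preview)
Your proposal is correct and follows essentially the same approach as the paper, which simply states that the result is proved ``by adapting arguments in \cref{sec:general}'' and notes that the Euler-number condition has been removed. Your plan fills in exactly the details the paper leaves implicit: reusing \cref{prop:hvfmap} and \cref{prop:seicover} verbatim, invoking \cref{lemma:keynobound} in place of \cref{prop:orbvfield} to exclude the degree-zero case, and observing that the independent integer shifts on the $\bt_i/\al_i$ make the global Euler-number constraint vacuous.
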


Even with the Euler number condition removed,
it may not be possible to find a \hvf{}.
Consider, for instance $(g,n; (3,1), (3,2))$.
No integer $d$ satisfies
\begin{math}
    \tfrac{1}{3} d \equiv \tfrac{2}{3} d \equiv \tfrac{-1}{3}    \bmod \bbZ
\end{math}
and so no \hvf{} exists.

If a Seifert fibering satisfies item $(4)$ of \cref{prop:boundaryequiv}
for some integer $d,$
then it also satisfies the condition when
$d$ is replaced by $d + m \ell$
where $m$ is any integer and
$\ell$ is the least common multiple of $\{\al_1, \ldots, \al_k\}.$
Using this and adapting the results in \cref{sec:homotopy},
one may show that if a Seifert fibering on a manifold with boundary
has one \hvf{}, then it has infinitely many homotopy classes
of such vector fields.

\bigskip

\noindent\textbf{Acknowledgements.}
The author thanks
Jonathan Bowden,
Heiko Dietrich,
Rafael Potrie,
Jessica Purcell,
Mario Shannon, and
Stephan Tillmann
for helpful conversations.
We also thank the reviewer for helpful comments.
This research was partially funded by the Australian Research Council.



\bibliographystyle{alpha}
\bibliography{dynamics}

\end{document}